\newcommand{\Z}{{\textsf{\textup{Z}}}}
\newcommand{\dminus}{\mbox{$\;^\cdot\!\!\!-$}}
\newtheorem{thm}{Theorem}
\newtheorem{lem}[thm]{Lemma}
\newtheorem{cor}[thm]{Corollary}
\newtheorem{defi}[thm]{Definition}
\newtheorem{rem}[thm]{Remark}
\newtheorem{nota}[thm]{Notation}
\newtheorem{exa}[thm]{Example}
\newtheorem{ack}[thm]{Acknowledgement}
\newtheorem*{tempo*}{Template}
\newtheorem{theorem}[thm]{Theorem}
\newtheorem{lemma}[thm]{Lemma}
\newtheorem{convention}[thm]{Convention}
\newcommand\be{\begin{equation}}
\newcommand\ee{\end{equation}} 
\def\bdefi{\begin{defi}\rm}
\def\edefi{\end{defi}}
\def\bnota{\begin{nota}\rm}
\def\enota{\end{nota}}
\def\FIVE{\Pi_{1}^{1}\text{-\textup{\textsf{CA}}}_{0}}
\def\SIX{\Pi_{2}^{1}\text{-\textsf{\textup{CA}}}_{0}}
\def\SIXk{\Pi_{k}^{1}\text{-\textsf{\textup{CA}}}_{0}}
\def\SIXK{\Pi_{k}^{1}\text{-\textsf{\textup{CA}}}_{0}^{\omega}}
\def\ATR{\textup{\textsf{ATR}}}
\def\ZFC{\textup{\textsf{ZFC}}}
\def\ZF{\textup{\textsf{ZF}}}
\def\L{\textsf{\textup{L}}}
\def\RCA{\textup{\textsf{RCA}}}
\def\({\textup{(}}
\def\){\textup{)}}
\def\RCAo{\textup{\textsf{RCA}}_{0}^{\omega}}
\def\ACAo{\textup{\textsf{ACA}}_{0}^{\omega}}
\def\WKL{\textup{\textsf{WKL}}}
\def\bye{\end{document}}
\def\N{{\mathbb  N}}
\def\Q{{\mathbb  Q}}
\def\R{{\mathbb  R}}
\def\di{\rightarrow}
\def\asa{\leftrightarrow}
\def\ACA{\textup{\textsf{ACA}}}
\def\QFAC{\textup{\textsf{QF-AC}}}
\def\NFP{\textup{\textsf{NFP}}}
\def\HBU{\textup{\textsf{HBU}}}
\def\net{\textup{\textsf{net}}}
\def\BW{\textup{\textsf{BW}}}
\def\FTC{\textup{\textsf{FTC}}}
\def\seq{\textup{\textsf{seq}}}
\def\LIND{\textup{\textsf{LIND}}}
\def\LIN{\textup{\textsf{LIN}}}
\def\SJ{\mathbb{S}}
\def\eps{\varepsilon}
\def\ECF{\textup{\textsf{ECF}}}
\def\SCF{\textup{\textsf{SCF}}}
\numberwithin{equation}{section}
\numberwithin{thm}{section}
\begin{document}
\title[On the significance of the uncountable]{On the mathematical and foundational significance of the uncountable}
\author{Dag Normann}
\address{Department of Mathematics, The University 
of Oslo, P.O. Box 1053, Blindern N-0316}
\email{dnormann@math.uio.no}
\author{Sam Sanders}
\address{School of Mathematics, University of Leeds \& Dept.\ of Mathematics, TU Darmstadt}
\email{sasander@me.com}
\begin{abstract}
We study the logical and computational properties of basic theorems of uncountable mathematics, including the \emph{Cousin} and \emph{Lindel\"of lemma} published in 1895 and 1903.  Historically, these lemmas were among the first formulations of \emph{open-cover compactness} and the \emph{Lindel\"of property}, respectively.
These notions are of great conceptual importance: the former is commonly viewed as a way of treating uncountable sets like e.g.\ $[0,1]$ as `almost finite', while the latter allows one to treat uncountable sets like e.g.\ $\R$ as `almost countable'. 
This reduction of the uncountable to the finite/countable turns out to have a \emph{considerable} logical and computational cost: we show that the aforementioned lemmas, and many related theorems, are \emph{extremely} hard to prove, while the associated sub-covers are \emph{extremely} hard to compute.
Indeed, in terms of the standard scale (based on comprehension axioms), a proof of these lemmas requires at least the full extent of \emph{second-order arithmetic}, a system originating from Hilbert-Bernays' \emph{Grundlagen der Mathematik}.  This observation has far-reaching implications for the \emph{Grundlagen}'s spiritual successor, the program of \emph{Reverse Mathematics}, and the associated \emph{G\"odel hierachy}. 
We also show that the Cousin lemma is essential for the development of the \emph{gauge integral}, a generalisation of the Lebesgue and improper Riemann integrals that also uniquely provides a direct formalisation of Feynman's path integral.  
\end{abstract}


\maketitle
\thispagestyle{empty}
\vspace{-0.5cm}
The content of this paper extends \cite{dagsamIII} in that Sections \ref{extradata} and \ref{klipel} below are new.
Small corrections/additions have also been made to reflect new developments.
\section{Introduction}
\subsection{Infinity: hubris and catharsis}
It is a commonplace that finite and countable sets exhibit many useful properties that uncountable sets lack.  
Conveniently, there are properties that allow one to treat uncountable sets \emph{as though they were finite or countable}, namely \emph{open-cover compactness} and the \emph{Lindel\"of property}, i.e.\  the statement that an open cover has a finite, respectively countable, sub-cover.  

\smallskip

These notions are well-established: the \emph{Cousin lemma} (\cite{cousin1}*{p.\ 22}) on the open-cover compactness of subsets of $\R^{2}$, 
dates back\footnote{The collected works of Pincherle contain a footnote by the editors (See \cite{tepelpinch}*{p.\ 67}) which states that the associated \emph{Teorema} (published in 1882) corresponds to the Heine-Borel theorem.  Moreover, Weierstrass proves the Heine-Borel theorem (without explicitly formulating it) in 1880 in \cite{amaimennewekker}*{p.\ 204}.   A detailed motivation for these claims may be found in \cite{medvet}*{p. 96-97}.} 135 years, while the \emph{Lindel\"of lemma} (\cite{blindeloef}*{p.\ 698}) on the \emph{Lindel\"of property} of $\R^{n}$, dates back about 115 years.  
Despite their basic nature, their central role in analysis, and a long history, little is known about the logical and computational properties of the Cousin and Lindel\"of lemmas.  In a nutshell, we aim to fill this hole in the literature in this paper.  We discuss our motivations and goals in detail in Sections \ref{foma} and \ref{basic} respectively.   

\smallskip

As it tuns out, the \emph{hubris} of reducing the uncountable to the finite/countable as in the Cousin and Lindel\"of lemmas, comes at great logical and computational cost.  
Indeed, we establish below that these lemmas are \emph{extremely hard to prove}, while the sub-covers from these lemmas are similarly \emph{hard to compute}. 
Now, `hardness of proof' is measured by what \emph{comprehension\footnote{Intuitively speaking, a comprehension axiom states that the set $\{x\in X: \varphi(x)\}$ exists for all formulas $\varphi$ in a certain class, and with the variable $x$ in the domain $X$.} axioms} are necessary to prove the theorem.  
In this sense, a proof of the Cousin and Lindel\"of lemmas requires (comprehension axioms as strong as) \emph{second-order arithmetic}, as is clear from Figure~\ref{xxy}, where
the latter originates from Hilbert-Bernays' \emph{Grundlagen der Mathematik} \cite{hillebilly2}.

\smallskip

Moreover, the Cousin and Lindel\"of lemmas are not isolated events: we provide a list of basic theorems (See Section~\ref{basic}) with the same `extreme' logical and computational properties.
Some of the listed theorems are even of great conceptual importance as they pertain to the \emph{gauge integral} (\cite{bartle}), which provides a generalisation of the Lebesgue and improper Riemann integrals, and (to the best of our knowledge) the only \emph{direct} formalisation of the \emph{Feynman path integral} (\cites{burkdegardener,mullingitover, mully}).     

\smallskip

By way of \emph{catharsis}, our results call into question various empirical claims from the foundation of mathematics, such as the `Big Five' classification from Reverse Mathematics (See Section \ref{RM}) and the linear nature of the \emph{G\"odel hierarchy} (See Section \ref{kodel}).  
Nonetheless, we obtain in Section \ref{introgau} \emph{Reverse Mathematics style} equivalences
involving the Cousin lemma and \emph{basic} properties of the \emph{gauge integral}.  

\smallskip

Finally, Reverse Mathematics is intimately connected to \emph{classical} computability theory (See e.g.\ \cite{simpson2}*{II.7.5}); similarly, our results have an (almost) equivalent reformulation in \emph{higher-order} computability theory, and are even (often) obtained via the latter.    
Furthermore, in light of this correspondence, we investigate in Section~\ref{RMR2} the strength of the Cousin and Lindel\"of lemmas \emph{when combined with} fundamental objects from computability theory.  This study yields surprising results reaching all the way up to Gandy's \emph{superjump} (\cite{supergandy}), a `higher-order' version of Turing's \emph{Halting problem} (\cite{tur37}), the prototypical non-computable object.

\subsection{Foundational and mathematical motivations}\label{foma}
We discuss the motivations for this paper.  Items \eqref{forko} and \eqref{forko2} motivate the study of mathematics \emph{beyond} the language of second-order arithmetic $\L_{2}$, the framework for `classical' Reverse Mathematics, while a notable consequence is provided by item \eqref{forko3}.  
\begin{enumerate}
\renewcommand{\theenumi}{\roman{enumi}}
\item The \emph{gauge integral} is a generalisation of the Lebesgue and (improper) Riemann integral, and formalises Feynman's path integral (See Section~\ref{mamo}).  
The language $\L_{2}$ cannot accommodate (basic) gauge integration.\label{forko}
\item The foundational studies of mathematics led by Hilbert take place in a logical framework \emph{richer} than the language $\L_{2}$ (See Section \ref{fomo}).  
It is natural to ask if anything is lost by restricting to $\L_{2}$.  \label{forko2}
\item The \emph{compatibility problem} for Nelson's \emph{predicative arithmetic} (\cite{ohnelly}) was solved in the negative (\cite{buss3}).  We solve the compatibility problem for \emph{Weyl-Feferman predicative mathematics} in the negative (See Section \ref{comp}).  \label{forko3}
\end{enumerate}
As an example of how items \eqref{forko} and \eqref{forko2} are intimately related: the \emph{uniqueness} of the gauge integral requires (Heine-Borel) compactness for \emph{uncountable} covers.  
The latter compactness cannot be formulated in $\L_{2}$, and will be seen to have \emph{completely} different logical and computational properties compared to the `countable/second-order' substitute, i.e.\ (Heine-Borel) compactness for \emph{countable} covers.  

\

\subsubsection{Mathematical motivations}\label{mamo}
In this section, we discuss the mathematical motivations for this paper, provided by the study of the \emph{gauge integral}.  
As will become clear,  the latter cannot be (directly) formulated in the language of second-order arithmetic, yielding a measure of motivation 
for our adoption of Kohlenbach's \emph{higher-order} framework involving all finite types.  

\smallskip

First of all, the gauge integral (aka \emph{Henstock-Kurzweil} integral) was introduced around 1912 by Denjoy (in a different form) and constitutes a simultaneous generalisation of the Lebesgue and \emph{improper} Riemann integral.  
The gauge integral provides (to the best of our knowledge) the only formal framework close to the original development of the \emph{Feynman path integral} (\cite{mullingitover, burkdegardener, mully}), i.e.\ gauge integrals are highly relevant in (the foundations of) physics.   
As expected, the gauge integral can handle discontinuous functions, which were around at the time: Dirichlet discusses the characteristic function of $\Q$ around 1829 in \cite{didi1}, while Riemann defines a function with countably many discontinuities in his \emph{Habilitationsschrift} 
 \cite{kleine}.  

\smallskip

Secondly, since Lebesgue integration is studied in Reverse Mathematics (See \cite{simpson2}*{X.1}), it is a natural next step to study the gauge integral.  
However, this study cannot take place in the language of second-order arithmetic for the following reasons: on one hand (general) discontinuous functions are essential for proving basic results of the gauge integral by Remark \ref{splitskop} and Corollary \ref{ofvaluesee}.  On the other hand, by Theorem~\ref{firstje}, the \emph{uniqueness} of the gauge integral requires the \emph{Cousin lemma} (\cite{cousin1}*{p.\ 22}), which deals with \emph{uncountable covers}, and the latter cannot be formulated in the language of second-order arithmetic.  

\smallskip

In conclusion, the gauge integral seems to require a logical framework \emph{richer} than second-order arithmetic.  
Now, this richer framework yields surprising results: the Cousin lemma expresses compactness for \emph{uncountable} open covers; this lemma turns out to have \emph{completely} different logical and computational properties compared to compactness restricted to \emph{countable} covers as in Reverse Mathematics (\cite{simpson2}*{IV.1}).

\subsubsection{Foundational motivations}\label{fomo}
We show that the foundational studies of mathematics led by Hilbert took place in a framework \emph{richer} than second-order arithmetic.
First of all, in his 1917-1933 lectures on the foundations of mathematics (\cite{hillebillen}), Hilbert used a logical system involving \emph{third-order}\footnote{In the notation of this paper, to be introduced in Section \ref{KOH}, $n+1$-th order objects ($n\geq 0$) correspond to objects of type $n$.\label{typisch}} \emph{Funktionfunktionen}.  Ackermann's 1924 dissertation (supervised by Hilbert) starts with an overview of \emph{Hilbertsche Beweistheorie}, i.e.\ Hilbertian proof theory, which explicitly includes \emph{third-order}$^{\ref{typisch}}$ parameters and the `epsilon' operator.  

\smallskip

Secondly, Hilbert and Bernays introduce\footnote{All other systems in \cite{hillebilly2}*{Suppl.~IV} are either a variation of $H$ or more limited than $H$.} the formal system $H$ in \cite{hillebilly2}*{Supplement~IV}, and use it to formalise parts of mathematics, again based on the `epsilon' operator.  
Now, Hilbert and Bernays in \cite{hillebilly2}*{p.\ 495} use the epsilon operator to define a certain object $\xi$ which maps functions to functions, i.e.\ a \emph{third-order object}.  
Similarly, Feferman's `$\mu$' operator (See Section \ref{HCT}) is defined with the same name in \cite{hillebilly2}*{p.\ 476}, while the `$\nu$' operator from \cite{hillebilly2}*{p.\ 479} is only a slight variation of the Suslin functional (See Section \ref{HCT}).  
Hence, one could develop large parts of Kohlenbach's \emph{higher-order Reverse Mathematics} (See Section \ref{KOH}) in $H$.

\smallskip

Thirdly, Simpson positions \emph{Reverse Mathematics} (See Section \ref{RM}) in \cite{simpson2}*{p.\ 6} as a continuation of Hilbert-Bernays' research, namely as follows:
\begin{quote}
The development of a portion of ordinary
mathematics within [second-order arithmetic] $\textsf{Z}_{2}$ is outlined in Supplement IV of Hilbert/Bernays [\dots]. The present book may be regarded as a continuation of the research
begun by Hilbert and Bernays. 
\end{quote}
In conclusion, the foundational studies of Hilbert-Bernays-Ackermann take place in a language \emph{richer} than $\L_{2}$, 
and it is a natural \emph{foundational} question if anything is lost by restricting to the latter.  By Theorem \ref{main1}, the loss can be extreme: in terms of comprehension axioms, a proof of the Cousin lemma requires a system as strong as second-order arithmetic, 
while this lemma \emph{restricted to countable covers/the language of second-order arithmetic} is provable in a weak system by \cite{simpson2}*{IV.1}. 

\subsubsection{Foundational consequences}\label{comp}
We discuss the \emph{compatibility problem} for predicative mathematics \emph{\`a la} Weyl-Fefermann.  
As it turns out, our results solve this problem in the negative, providing another motivation for 
this paper.  

\smallskip

Russell famously identified an inconsistency in early set theory, known as \emph{Russel's paradox}, based on the `set of all sets' (\cite{vajuju}).  
According to Russel, the source of this paradox was \emph{circular reasoning}: in defining the `set of all sets', one quantifies over all sets, including
the one that is being defined.  To avoid such problems, Russel suggested banning any \emph{impredicative definition}, i.e.\ a definition in which one
quantifies over the object being defined.  The textbook example of an impredicative definition is the supremum of a bounded set of reals, defined as the \emph{least} upper bound of that set.  
Weyl, a student of Hilbert, initiated the development of \emph{predicative mathematics} (\cite{weyldas}), i.e.\ avoiding impredicative definitions, which Feferman continued (\cite{fefermanga,fefermandas, fefermanlight}).  Finally, the fourth `Big Five' system of Reverse Mathematics is considered the `upper limit' of predicative mathematics (See \cite{simpson2}*{\S I.12}).  
  
\smallskip

In an (similar but much more strict) effort to develop mathematics based on a predicative notion of \emph{number}, Nelson introduced \emph{predicative arithmetic} (\cite{ohnelly}).  
Unfortunately, predicative arithmetic suffers from the \emph{compatibility problem}:  
If two theorems $A, B$ are both acceptable from the point of view of predicative arithmetic, it is possible that $A\wedge B$ \emph{is not} (\cite{buss3}).  
In this light, the development of predicative arithmetic seems somewhat arbitrary.  
It is then a natural question whether Weyl-Feferman predicative mathematics suffers from the same compatibility problem.  
We show that this is the case in Section \ref{RMR2}.
A detailed discussion, also explaining our notion `acceptable in predicative mathematics', may be found in Remark~\ref{predifiel}.

\subsection{Overview of main results}\label{basic}
Our main result is that, in terms of the usual scale of comprehension axioms, a proof of the \emph{Cousin and Lindel\"of lemmas} requires a system as strong as second-order arithmetic.  
The same result for the other theorems in Remark \ref{bthm} follows from our main result, as discussed in Section \ref{other}.  
A precise statement of our results is found at the end of this section.    
\begin{rem}[Basic theorems]\label{bthm}\rm~
\begin{enumerate}
\renewcommand{\theenumi}{\roman{enumi}}
\item \emph{Cousin lemma}: \textbf{any} open cover of $[0,1]$ has a finite sub-cover (\cite{cousin1}).    
\item \emph{Lindel\"of lemma}: \textbf{any} open cover of $\R$ has a countable sub-cover (\cite{blindeloef}).
\item \emph{Besicovitsch and Vitali covering lemmas} as in e.g.\ \cite{auke}*{\S2}.
\item Basic properties of the gauge integral (\cite{bartle}), like uniqueness, Hake's theorem, and extension of the Riemann integral.\label{dorfjes}  
\item \emph{Neighbourhood Function Principle} \textsf{NFP} (\cite{troeleke1}*{p.\ 215}).  \label{NFP}
\item The existence of \emph{Lebesgue numbers} for \textbf{any} open cover (\cite{moregusto}). 
\item The \emph{Banach-Alaoglu theorem} for \textbf{any} open cover (\cite{simpson2}*{X.2.4}, \cite{xbrownphd}*{p.\ 140}).
\item The \emph{Heine-Young} and \emph{Lusin-Young} theorems, the \emph{tile theorem} \cite{YY, wildehilde}, 
and the latter's generalisation due to Rademacher (\cite{rademachen}*{p.\ 190}).
\item The Bolzano-Weierstrass, Dini, and Arzel\`a theorems \emph{for nets} (\cite{moorsmidje,ooskelly}).
\end{enumerate}
\end{rem}
According to Bourbaki's historical note in \cite{gentop1}*{Ch.\ I}, the by far most important `acquisition' of Schoenflies' monograph \cite{schoeniswaar} 
is a theorem which constitutes a generalisation of the Cousin lemma.  
Another historical note is that Cousin (and Lindel\"of in \cite{blindeloef}*{p.\ 698}) talks about (uncountable) covers on \cite{cousin1}*{p.\ 22} as follows: 
\be\label{coukie}
\text{\emph{if for each $s\in S$ there is a circle of finite non-zero radius with $s$ as center}}
\ee
In particular, any $f:S\di \R^{+}$ gives rise to a cover \emph{in the sense of the previous quote by Cousin} as follows: $\cup_{x\in S}(x-f(x), x+f(x))$ covers $S\subset \R$.  
A rich history notwithstanding, the Cousin lemma does not show its age: there are recent attempts to develop elementary real analysis with this lemma as the `centerpiece' (\cite{thom, thom2}).

\smallskip

We now make our main results precise, for which some definitions are needed.  Detailed definitions may be found in Sections \ref{Frameingthemedia} and \ref{HCT}. 
\bdefi
Let $\textsf{Z}_{2}$ be \emph{second-order arithmetic} as defined in \cite{simpson2}*{I.2.4} and let $\Pi_{k}^{1}\textsf{-CA}_{0}$ be the fragment of $\textsf{Z}_{2}$ with comprehension restricted to $\Pi_{k}^{1}$-formulas. 
\edefi
As noted above, to formulate the theorems from the list, we require a richer language than that of $\textsf{Z}_{2}$.  
We shall make use of $\RCAo$, Kohlenbach's `base theory' of higher-order Reverse Mathematics (\cite{kohlenbach2}*{\S2}), and the associated language of \emph{all finite types}. 
We introduce this framework in detail in Section \ref{Frameingthemedia}.  
\bdefi\label{ocharme}
Let $(\exists^{3})$ state the existence of $^{3}\textbf{\textit{E}}$ from \cite{barwise}*{p.\ 713, \S12.3}; see Section \ref{HCT} for the exact definition.  
The functional $^{3}\textbf{\textit{E}}$ intuitively speaking decides any $\Pi_{\infty}^{1}$-formula.  Let $(S_{k}^{2})$ similarly state the existence of a functional deciding $\Pi_{k}^{1}$-formulas. 
For $k=1$, the subscript is omitted and $(S^{2})$ is called the \emph{Suslin functional}. 
We define $\Pi_{k}^{1}\textsf{-CA}_{0}^{\omega}\equiv \RCAo+(S_{k}^{2})$, $\Z_{2}^{\omega}:=\cup_{k}\SIXK$, and $\Z_{2}^{\Omega}\equiv\RCA_{0}^{\omega}+(\exists^{3})$. 
\edefi
We discuss in Remark \ref{XXZ} why Definition \ref{ocharme} furnishes the `right' (or at least `good') higher-order analogues of the respective second-order systems. 

\smallskip

Our main results, to be proved in Section \ref{fullviewdownmainstreet1}, are now as follows: 
\begin{enumerate}
\item[(i)] The Cousin and Lindel\"of lemmas are provable in $\Z_{2}^{\Omega}$ plus a minimal fragment of the axiom of choice.
\item[(ii)] The system $\Pi_{k}^{1}\textsf{-CA}_{0}^{\omega}$ (any $k\geq 1$) cannot prove any theorem in Remark \ref{bthm}.
\item[(iii)] The Cousin lemma is \emph{equivalent} to basic properties of the gauge integral in Kohlenbach's aforementioned framework.  
\end{enumerate}
We discuss the (considerable) implications for the G\"odel hierarchy in Section \ref{kodel}.  

\smallskip

Finally, as noted above, Reverse Mathematics is intimately connected to computability theory, and the same holds for our results;
for instance, the functional defined by $(\exists^{3})$ (resp.\ $(S_{k}^{2})$) can (resp.\ cannot) compute, in the sense of Section~\ref{HCT}, a finite sub-cover from the Cousin lemma on input an open cover of $[0,1]$.  

\smallskip
    
Our main results in Section \ref{RMR2} are then as follows: inspired by the aforementioned connection, 
we study the interaction between the theorems from the above list and the Big Five of Reverse Mathematics given by the Suslin functional $S$ and Feferman's \emph{search functional} $\mu$ from Section \ref{HCT}.  
This leads to surprising results in (higher-order) Reverse Mathematics, as follows:
\begin{enumerate}
\item The combination of the Cousin lemma and Feferman's $\mu$ yields \emph{transfinite recursion} for arithmetical formulas, i.e.\ the fourth Big Five system.  
We derive novel theorems about Borel functions from this result.    
\item The combination of the Cousin lemma and the Suslin functional $S$ yields Gandy's \emph{superjump}, the aforementioned `higher-order' \emph{Halting problem}.
\item The combination of the Lindel\"of lemma \emph{for Baire space} (resp.\ a realiser for the latter) and Feferman's $\mu$ yields $\FIVE$ (resp.\ the Suslin functional).
\end{enumerate}
As will become clear in Section \ref{lindeb}, the third item solves the compatibility problem of Weyl-Feferman predicativist mathematics from Section \ref{comp} in the negative.  
We also point out that the Lindel\"of lemma (resp.\ the Cousin lemma) and Feferman's $\mu$ are rather weak \emph{in isolation}, and only become strong when combined.  

\section{Preliminaries}\label{preli}
We sketch the program \emph{Reverse Mathematics} in Section \ref{RM}, discuss the associated \emph{G\"odel hierarchy} in Section \ref{kodel}, and introduce \emph{second-order and higher-order arithmetic} in Section \ref{Frameingthemedia}.  As our main results are proved using techniques from computability theory, we discuss some essential elements of the latter in Section~\ref{HCT}.

\subsection{Introducing Reverse Mathematics}\label{RM}
Reverse Mathematics (RM) is a program in the foundations of mathematics initiated around 1975 by Friedman (\cites{fried,fried2}) and developed extensively by Simpson (\cite{simpson2}) and others.  We refer to \cite{simpson2} for an overview of RM and introduce the required definitions (like the `base theory' $\RCA_{0}$) in Section \ref{RMBASE}; we now sketch some of the aspects of RM essential to this paper.  

\smallskip
  
The aim of RM is to find the axioms necessary to prove a statement of \emph{ordinary}, i.e.\ \emph{non-set theoretical}, mathematics.   
The classical base theory $\RCA_{0}$ of `computable mathematics', introduced in Section \ref{RMBASE}, is always assumed.  
Thus, the aim is:  
\begin{quote}
\emph{The aim of \emph{RM} is to find the minimal axioms $A$ such that $\RCA_{0}$ proves $ [A\di T]$ for statements $T$ of ordinary mathematics.}
\end{quote}
Surprisingly, once the minimal $A$ are known, we almost always also have $\RCA_{0}\vdash [A\asa T]$, i.e.\ we derive the theorem $T$ from the axioms $A$ (the `usual' way of doing mathematics), but we can also derive the axiom $A$ from the theorem $T$ (the `reverse' way).  In light of these `reversals', the field was baptised `Reverse Mathematics'.  

\smallskip

Perhaps even more surprisingly, in the majority of cases, for a statement $T$ of ordinary mathematics, either $T$ is provable in $\RCA_{0}$, or the latter proves $T\asa A_{i}$, where $A_{i}$ is one of the logical systems $\WKL_{0}, \ACA_{0},$ $ \ATR_{0}$ or $\FIVE$, which are all introduced in Section \ref{RMBASE}.  The latter four together with $\RCA_{0}$ form the `Big Five' and the aforementioned observation that most mathematical theorems fall into one of the Big Five categories, is called the \emph{Big Five phenomenon} (\cite{montahue}*{p.~432}).  
Furthermore, each of the Big Five has a natural formulation in terms of (Turing) computability (See \cite{simpson2}*{I}).
As noted by Simpson in \cite{simpson2}*{I.12}, each of the Big Five also corresponds (sometimes loosely) to a foundational program in mathematics.  

\smallskip

Finally, we note that the Big Five systems of RM yield a linear order:
\be\label{linord}
\FIVE\di \ATR_{0}\di \ACA_{0}\di\WKL_{0}\di \RCA_{0}.
\ee
By contrast, there are many incomparable \emph{logical} statements in second-order arithmetic.  For instance, a regular plethora of such statements may be found in the \emph{Reverse Mathematics zoo} in \cite{damirzoo}.  
The latter is intended as a collection of (somewhat natural) theorems outside of the Big Five classification of RM.  
\subsection{Reverse Mathematics and the G\"odel hierarchy}\label{kodel}
The \emph{G\"odel hierarchy} is a collection of logical systems ordered via consistency strength or (essentially equivalent) inclusion\footnote{Simpson claims in \cite{sigohi}*{p.\ 112} that inclusion and consistency strength yield the same (G\"odel) hierarchy as depicted in \cite{sigohi}*{Table 1}, i.e.\ this choice does not matter.\label{fooker}}.  This hierarchy is claimed by Simpson to capture most systems that are natural and/or have foundational import, as follows. 
\begin{quote}
\emph{It is striking that a great many foundational theories are linearly ordered by $<$. Of course it is possible to construct pairs of artificial theories which are incomparable under $<$. 
However, this is not the case for the ``natural'' or non-artificial theories which are usually regarded as significant in the foundations of mathematics.} 
(\cite{sigohi})
\end{quote}
Burgess and Koelner make essentially the same claims in \cite{dontfixwhatistoobroken}*{\S1.5} and \cite{peterpeter, peterpeter2}.
However, our results imply that the theorems in Remark \ref{bthm} do not fit the G\"odel hierarchy (with the latter based on inclusion$^{\ref{fooker}}$).  
In particular, we obtain a branch that is \emph{independent} of the medium range of the G\"odel hierarchy, depicted below. 
\begin{figure}[h]
\[
\begin{array}{lll}
&\textup{\textbf{strong}} \hspace{1.5cm}& 
\left\{\begin{array}{l}
\vdots\\
\textup{supercompact cardinal}\\
\vdots\\
\textup{measurable cardinal}\\
\vdots\\
\ZFC \\
\textsf{\textup{ZC}} \\
\textup{simple type theory}
\end{array}\right.
\\
&& \\
  &&~\quad{ {\Z_{2}^{\Omega}+\QFAC^{0,1}}}\\
&&\\
&\textup{\textbf{medium}} & 
\left\{\begin{array}{l}
 \Z_{2}^{\omega} \equiv \cup_{k}\SIXK\\
\vdots\\
\textup{$\Pi_{2}^{1}\textsf{-CA}_{0}^{ {\omega}}$}\\
\textup{$\FIVE^{ {\omega}}$ }\\
\textup{$\ATR_{0}^{ {\omega}}$}  \\
\textup{$\ACA_{0}^{ {\omega}}$} \\
\end{array}\right.
\\
&
\\
{ {\left\{\begin{array}{l}
\textup{Cousin and Lindel\"of lemmas}\\
\textup{basic prop.\ of gauge integral}\\
\end{array}\right\}}}
&\begin{array}{c}\\\textup{\textbf{weak}}\\ \end{array}& 
\left\{\begin{array}{l}
\WKL_{0}^{ {\omega}} \\
\textup{$\RCA_{0}^{ {\omega}}$} \\
\textup{$\textsf{PRA}$} \\
\textup{$\textsf{EFA}$ } \\
\textup{bounded arithmetic} \\
\end{array}\right.
\\
\end{array}
\]
\caption{The G\"odel hierarchy with a side-branch for the medium range}\label{xxy}
\begin{picture}(250,0)
\put(155,200){ {\vector(-3,-2){125}}}
\put(160,117){ {\vector(-3,-2){53}}}
\put(100,70){ {\vector(1,0){80}}}
\put(125,100){ {\vector(3,2){50}}}
\put(150,100){{\line(-5,3){20}}}
\end{picture}
\end{figure}
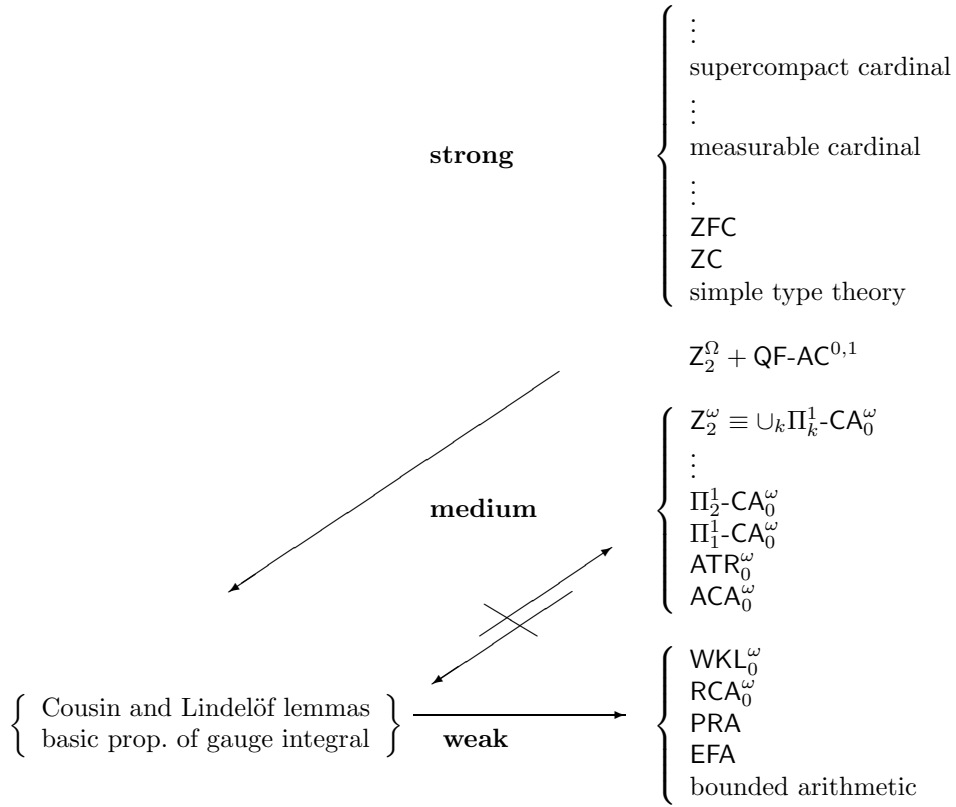\\
Arguably, the G\"odel hierarchy is a central object of study in mathematical logic, as e.g.\ stated by Simpson in \cite{sigohi}*{p.\ 112} or Burgess in \cite{dontfixwhatistoobroken}*{p.\ 40}.  
Some remarks on the technical details concerning Figure \ref{xxy} are as follows. 
\begin{enumerate}
\item Note that we use a \emph{non-essential} modification of the G\"odel hierarchy, namely involving systems of higher-order arithmetic, like e.g.\ $\ACA_{0}^{\omega}$ instead of $\ACA_{0}$; these systems are (at least) $\Pi_{2}^{1}$-conservative over the associated second-order system (See e.g.\ \cite{yamayamaharehare}*{Theorem 2.2}).  
\item In the spirit of RM, we show in \cite{dagsamV} that the Cousin lemma and (a version of) the Lindel\"of lemma are provable \emph{without} the use of $\QFAC^{0,1}$, as also discussed in Remark \ref{linpinpon}.     
\item The system $\Z_{2}^{\Omega}$ is placed \emph{between} the medium and strong range, as the combination of the recursor $\textsf{R}_{2}$ from G\"odel's $T$ and $\exists^{3}$ yields a system stronger than $\Z_{2}^{\Omega}$.  The system $\SIXK$ is more robust in this sense.    
\item Despite Simpson's grand claim in the above quote, there are now some examples of logical systems that fall outside of the G\"odel hierarchy, like \emph{special cases} of Ramsey's theorem and the axiom of determinacy (\cites{dsliceke, shoma}).    
\end{enumerate}
Finally, in light of the equivalences involving the gauge integral and the Cousin lemma (See Section \ref{introgau}), the latter seriously challenges the `Big Five' classification from RM,  the linear nature of the G\"odel hierarchy,
 as well as Feferman's claim that the mathematics necessary for the development of physics can be formalised in relatively weak logical systems (See Remark \ref{fefferketoch} for the latter claim).

\subsection{The framework of Reverse Mathematics}\label{Frameingthemedia}
We introduce axiomatic systems essential to RM.  
We start with a sketch of \emph{second-order arithmetic} (See \cite{simpson2}*{I.2.4}), the framework of Friedman-Simpson RM, and finish with \emph{higher-order artihmetic}, the framework of Kohlenbach's \emph{higher-order} RM (See \cite{kohlenbach2}).  
\subsubsection{Second-order arithmetic and fragments}\label{RMBASE}
The language $\L_{2}$ of second-order arithmetic $\Z_{2}$ has two sorts of variables: \emph{number variables} $n, m, k, l, \dots$ intended to range over the natural numbers, and \emph{set variables} $X, Y, Z, \dots$ intended to range over sets of natural numbers.  
The constants of $\L_{2}$ are $0, 1, <_{\N}, +_{\N}, \times_{\N}, =_{\N}$ and $ \in$, which are intended to have their usual meaning (by the axioms introduced below).  Formulas and terms are built up from these constants in the usual way.  
\bdefi Second-order arithmetic $\Z_{2}$ consists of three axiom schemas:
\begin{enumerate}
\item Basic axioms expressing that $0, 1, <_{\N}, +_{\N}, \times_{\N}$ form an ordered semi-ring with equality $=_{\N}$.  
\item Induction: For any $X$, $\big(0\in X\wedge (\forall n)(n\in X\di n+1\in X)\big)\di (\forall n)(n\in X)$.
\item Comprehension: For any formula $\varphi(n)$ of $\L_{2}$ which does not involve the variable $X$, we have $(\exists X)(\forall n)(n\in X\asa \varphi(n))$.
\end{enumerate}
\edefi
Induction is well-known, while comprehension intuitively expresses 
that any $\L_{2}$-formula $\varphi(n)$ yields a set $X=\{n\in \N:\varphi(n)\}$ consisting of exactly those numbers $n\in \N$ satisfying $\varphi(n)$.  
Now, \emph{fragments} of $\Z_{2}$ are obtained by restricting comprehension (and induction), for which the following definition is needed.  
\bdefi[Formula classes]\label{ahah}
\begin{enumerate}
\item A formula of $\L_{2}$ is \emph{quantifier-free} ($\Sigma_{0}^{0}$ or $\Pi_{0}^{0}$) if it does not involve quantifiers.  
To be clear: variables are allowed; only quantifiers are banned.  
\item A formula of $\L_{2}$ is \emph{arithmetical} ($\Sigma_{0}^{1}$ or $\Pi_{0}^{1}$) if it only involves quantifiers over number variables, i.e.\ set quantifiers like $(\exists X)$ and $(\forall Y)$ are not allowed.  
\item An arithmetical formula is $\Sigma_{k+1}^{0}$ (resp.\ $\Pi_{k+1}^{0}$) if it has the form $(\exists n)\varphi(n)$ (resp.\ $(\forall n)\varphi(n)$) with $\varphi$ in $\Pi_{k}^{0}$ (resp.\ in $\Sigma_{k}^{0}$).
\item A formula of $\L_{2}$ is $\Sigma_{k+1}^{1}$ (resp.\ $\Pi_{k+1}^{1}$) if it has the form $(\exists X)\varphi(X)$ (resp.\ $(\forall X)\varphi(X)$) with $\varphi$ in $\Pi_{k}^{1}$ (resp.\ in $\Sigma_{k}^{1}$).
\item A formula of $\L_{2}$ is $\Delta^{i}_{k+1}$ if it is both $\Pi_{k+1}^{i}$ and $\Sigma_{k+1}^{i}$ for $i=0,1$.
\end{enumerate}
\edefi
Intuitively, a $\Sigma_{k}^{0}$-formula is a quantifier-free formula pre-fixed by $k$ alternating number quantifiers, starting with an existential one; a $\Sigma_{k}^{1}$-formula is an arithmetical formula pre-fixed by $k$ alternating set quantifiers, starting with an existential one.  
The $\Pi$-formulas are (equivalent to) negations of the corresponding $\Sigma$-versions.  

\smallskip

Using the above, the third and fifth `Big Five' systems $\ACA_{0}$ and $\FIVE$ are just $\Z_{2}$ with comprehension restricted to resp.\ arithmetical and $\Pi_{1}^{1}$-formulas.  
Alternatively, $\ACA_{0}$ allows one to build sets using \emph{finite} iterations of Turing's \emph{Halting problem} (\cite{tur37}), aka the \emph{Turing jump}; intuitively, $\ATR_{0}$ extends this to \emph{transfinite recursion}, i.e.\ the \emph{unbounded} iteration of the Turing jump \emph{along any countable well-ordering}.  
Furthermore, the `base theory' $\RCA_{0}$ is $\Z_{2}$ with comprehension restricted to $\Delta_{1}^{0}$-formulas, plus induction for $\Sigma_{1}^{0}$-formulas.  
As discussed in \cite{simpson2}*{II and IX.3}, $\Delta_{1}^{0}$-comprehension essentially expresses that `all computable sets exists', while $\Sigma_{1}^{0}$-induction corresponds to
primitive recursion in the sense of Hilbert's \emph{finitistic mathematics}.  The system $\WKL_{0}$ is just $\RCA_{0}$ extended by the \emph{weak K\"onig's lemma}\footnote{To be absolutely clear, we take `$\WKL$' to be the $\L_{2}$-sentence \emph{every infinite binary tree has a path} as in \cite{simpson2}, while the Big Five system $\WKL_{0}$ is $\RCA_{0}+\WKL$, and $\WKL_{0}^{\omega}$ is $\RCAo+\WKL$.} ($\WKL$ hereafter) which states that an infinite binary tree has a path.  

\smallskip

Finally, in light of the previous and \eqref{linord}, the Big Five only constitute a \emph{very tiny fragment} of $\Z_{2}$; on a related note, the RM of topology does give rise to theorems equivalent to $\SIX$ (\cite{mummy}), but that is the current upper bound of RM to the best of our knowledge.  
In particular, if $\SIXk$ is $\Z_{2}$ restricted to $\Pi_{k}^{1}$-comprehension, then this system can be said to `go beyond Friedman-Simpson RM' for $k\geq 3$.

\subsubsection{Higher-order arithmetic and fragments}\label{KOH}
As suggested by its name, \emph{higher-order arithmetic} extends second-order arithmetic.  Indeed, while the latter is restricted to numbers and sets of numbers, higher-order arithmetic also has sets of sets of numbers, sets of sets of sets of numbers, et cetera.  
To formalise this idea, we introduce the collection of \emph{all finite types} $\mathbf{T}$, defined by the two clauses:
\begin{center}
(i) $0\in \mathbf{T}$   and   (ii)  If $\sigma, \tau\in \mathbf{T}$ then $( \sigma \di \tau) \in \mathbf{T}$,
\end{center}
where $0$ is the type of natural numbers, and $\sigma\di \tau$ is the type of mappings from objects of type $\sigma$ to objects of type $\tau$.
In this way, $1\equiv 0\di 0$ is the type of functions from numbers to numbers, and where  $n+1\equiv n\di 0$.  Viewing sets as given by their characteristic function, we note that $\Z_{2}$ only includes objects of type $0$ and $1$.    

\smallskip

The language of $\L_{\omega}$ consists of variables $x^{\rho}, y^{\rho}, z^{\rho},\dots$ of any finite type $\rho\in \mathbf{T}$.  Types may be omitted when they can be inferred from context.  
The constants of $\L_{\omega}$ includes the type $0$ objects $0, 1$ and $ <_{0}, +_{0}, \times_{0},=_{0}$  which are intended to have the same meaning as their $\N$-subscript counterparts in $\Z_{2}$.
Equality at higher types is defined in terms of `$=_{0}$' as follows: for any objects $x^{\tau}, y^{\tau}$, we have
\be\label{aparth}
[x=_{\tau}y] \equiv (\forall z_{1}^{\tau_{1}}\dots z_{k}^{\tau_{k}})[xz_{1}\dots z_{k}=_{0}yz_{1}\dots z_{k}],
\ee
if the type $\tau$ is composed as $\tau\equiv(\tau_{1}\di \dots\di \tau_{k}\di 0)$.  
Furthermore, $\L_{\omega}$ also includes the \emph{recursor constant} $\mathbf{R}_{\sigma}$ for any $\sigma\in \mathbf{T}$, which allows for iteration on type $\sigma$-objects as in the special case \eqref{special}.  
Formulas and terms are defined as usual.  
\bdefi The base theory $\RCAo$ consists of the following axioms:
\begin{enumerate}
\item  Basic axioms expressing that $0, 1, <_{0}, +_{0}, \times_{0}$ form an ordered semi-ring with equality $=_{0}$.
\item Basic axioms defining the well-known $\Pi$ and $\Sigma$ combinators (aka $K$ and $S$ in \cite{avi2}), which allow for the definition of \emph{$\lambda$-abstraction}. 
\item The defining axiom of the recursor constant $\mathbf{R}_{0}$: For $m^{0}$ and $f^{1}$: 
\be\label{special}
\mathbf{R}_{0}(f, m, 0):= m \textup{ and } \mathbf{R}_{0}(f, m, n+1):= f( \mathbf{R}_{0}(f, m, n)).
\ee
\item The \emph{axiom of extensionality}: for all $\rho, \tau\in \mathbf{T}$, we have:
\be\label{EXT}\tag{$\textsf{\textup{E}}_{\rho, \tau}$}  
(\forall  x^{\rho},y^{\rho}, \varphi^{\rho\di \tau}) \big[x=_{\rho} y \di \varphi(x)=_{\tau}\varphi(y)   \big].
\ee 
\item The induction axiom for quantifier-free\footnote{To be absolutely clear, similar to Definition \ref{ahah}, variables (of any finite type) are allowed in quantifier-free formulas: only quantifiers are banned.} formulas.
\item $\QFAC^{1,0}$: The quantifier-free axiom of choice as in Definition \ref{QFAC}.
\end{enumerate}
\edefi
\bdefi\label{QFAC} The axiom $\QFAC$ consists of the following for all $\sigma, \tau \in \textbf{T}$:
\be\tag{$\QFAC^{\sigma,\tau}$}
(\forall x^{\sigma})(\exists y^{\tau})A(x, y)\di (\exists Y^{\sigma\di \tau})(\forall x^{\sigma})A(x, Y(x))
\ee
for any quantifier-free formula $A$ in the language of $\L_{\omega}$.
\edefi
As discussed in \cite{kohlenbach2}*{\S2}, $\RCAo$ and $\RCA_{0}$ prove the same sentences `up to language' as the latter is set-based and the former function-based.  

\smallskip

Furthermore, recursion as in \eqref{special} is called \emph{primitive recursion}; the class of functionals obtained from $\mathbf{R}_{\rho}$ for all $\rho \in \mathbf{T}$ is called \emph{G\"odel's system $T$} of all (higher-order) primitive recursive functionals.  

\smallskip

We use the usual notations for natural, rational, and real numbers, and the associated functions, as introduced in \cite{kohlenbach2}*{p.\ 288-289}.  
\begin{defi}[Real numbers and related notions in $\RCAo$]\label{keepintireal}\rm~
\begin{enumerate}
\renewcommand{\theenumi}{\roman{enumi}}
\item Natural numbers correspond to type zero objects, and we use `$n^{0}$' and `$n\in \N$' interchangeably.  Rational numbers are defined as signed quotients of natural numbers, and `$q\in \Q$' and `$<_{\Q}$' have their usual meaning.    
\item Real numbers are coded by fast-converging Cauchy sequences $q_{(\cdot)}:\N\di \Q$, i.e.\  such that $(\forall n^{0}, i^{0})(|q_{n}-q_{n+i})|<_{\Q} \frac{1}{2^{n}})$.  
We use Kohlenbach's `hat function' from \cite{kohlenbach2}*{p.\ 289} to guarantee that every $f^{1}$ defines a real number.  
\item We write `$x\in \R$' to express that $x^{1}:=(q^{1}_{(\cdot)})$ represents a real as in the previous item and write $[x](k):=q_{k}$ for the $k$-th approximation of $x$.    
\item Two reals $x, y$ represented by $q_{(\cdot)}$ and $r_{(\cdot)}$ are \emph{equal}, denoted $x=_{\R}y$, if $(\forall n^{0})(|q_{n}-r_{n}|\leq \frac{1}{2^{n-1}})$. Inequality `$<_{\R}$' is defined similarly.         
\item Functions $F:\R\di \R$ mapping reals to reals are represented by $\Phi^{1\di 1}$ mapping equal reals to equal reals, i.e. $(\forall x, y)(x=_{\R}y\di \Phi(x)=_{\R}\Phi(y))$.\label{REXTJE}
\item The relation `$x\leq_{\tau}y$' is defined as in \eqref{aparth} but with `$\leq_{0}$' instead of `$=_{0}$'.  
\item Sets of type $\rho$ objects $X^{\rho\di 0}, Y^{\rho\di 0}, \dots$ are given by their characteristic functions $f^{\rho\di 0}_{X}$, i.e.\ $(\forall x^{\rho})[x\in X\asa f_{X}(x)=_{0}1]$, where $f_{X}^{\rho\di 0}\leq_{\rho\di 0}1$.  
\end{enumerate}
\end{defi}
We sometimes omit the subscript `$\R$' if it is clear from context.  
We also introduce some notation to handle finite sequences nicely.  
\begin{nota}[Finite sequences]\label{skim}\rm
We assume a dedicated type for `finite sequences of objects of type $\rho$', namely $\rho^{*}$.  Since the usual coding of pairs of numbers goes through in $\RCAo$, we shall not always distinguish between $0$ and $0^{*}$. 
Similarly, we do not always distinguish between `$s^{\rho}$' and `$\langle s^{\rho}\rangle$', where the former is `the object $s$ of type $\rho$', and the latter is `the sequence of type $\rho^{*}$ with only element $s^{\rho}$'.  The empty sequence for the type $\rho^{*}$ is denoted by `$\langle \rangle_{\rho}$', usually with the typing omitted.  

\smallskip

Furthermore, we denote by `$|s|=n$' the length of the finite sequence $s^{\rho^{*}}=\langle s_{0}^{\rho},s_{1}^{\rho},\dots,s_{n-1}^{\rho}\rangle$, where $|\langle\rangle|=0$, i.e.\ the empty sequence has length zero.
For sequences $s^{\rho^{*}}, t^{\rho^{*}}$, we denote by `$s*t$' the concatenation of $s$ and $t$, i.e.\ $(s*t)(i)=s(i)$ for $i<|s|$ and $(s*t)(j)=t(|s|-j)$ for $|s|\leq j< |s|+|t|$. For a sequence $s^{\rho^{*}}$, we define $\overline{s}N:=\langle s(0), s(1), \dots,  s(N-1)\rangle $ for $N^{0}<|s|$.  
For a sequence $\alpha^{0\di \rho}$, we also write $\overline{\alpha}N=\langle \alpha(0), \alpha(1),\dots, \alpha(N-1)\rangle$ for \emph{any} $N^{0}$.  By way of shorthand, 
$(\forall q^{\rho}\in Q^{\rho^{*}})A(q)$ abbreviates $(\forall i^{0}<|Q|)A(Q(i))$, which is (equivalent to) quantifier-free if $A$ is.   
\end{nota}

\subsection{Higher-order computability theory}\label{HCT}
As noted above, our main results will be proved using techniques from computability theory.
Thus, we first make our notion of `computability' precise as follows.  
\begin{enumerate}
\item[(I)] We adopt $\ZFC$, i.e.\ Zermelo-Fraenkel set theory with the Axiom of Choice, as the official metatheory for all results, unless explicitly stated otherwise.
\item[(II)] We adopt Kleene's notion of \emph{higher-order computation} as given by his nine clauses S1-S9 (See \cites{longmann, Sacks.high}) as our official notion of `computable'.
\end{enumerate}
For the rest of this section, we introduce some existing functionals which will be used below.
These functionals constitute the counterparts of $\Z_{2}$, and some of the Big Five systems, in higher-order RM.
First of all, $\ACA_{0}$ is readily derived from:
\be\label{mu}\tag{$\mu^{2}$}
(\exists \mu^{2})\big[ (\forall f^{1})( (\exists n)(f(n)=0 )\di f(\mu(f))=0)    \big], 
\ee
and $\ACA_{0}^{\omega}\equiv\RCAo+(\mu^{2})$ proves the same $\Pi_{2}^{1}$-sentences as $\ACA_{0}$ by \cite{yamayamaharehare}*{Theorem~2.2}.   The functional $\mu^{2}$ in $(\mu^{2})$ is also called \emph{Feferman's $\mu$} (\cite{avi2}), 
and is clearly \emph{discontinuous} at $f=_{1}11\dots$; in fact, $(\mu^{2})$ is equivalent to the existence of $F:\R\di\R$ such that $F(x)=1$ if $x>_{\R}0$, and $0$ otherwise (\cite{kohlenbach2}*{\S3}).

\smallskip
\noindent
Secondly, $\FIVE$ is readily derived from the following sentence:
\be\tag{$S^{2}$}
(\exists S^{2}\leq_{2}1)(\forall f^{1})\big[  (\exists g^{1})(\forall x^{0})(f(\overline{g}n)=0)\asa S(f)=0  \big], 
\ee
and $\FIVE^{\omega}\equiv \RCAo+(S^{2})$ proves the same $\Pi_{3}^{1}$-sentences as $\FIVE$ by \cite{yamayamaharehare}*{Theorem 2.2}.   The functional $S^{2}$ in $(S^{2})$ is also called \emph{the Suslin functional} (\cite{kohlenbach2}).
By definition, the Suslin functional $S^{2}$ can decide whether a $\Sigma_{1}^{1}$-formula (as in the left-hand side of $(S^{2})$) is true or false.   We similarly define the functional $S_{k}^{2}$ which decides the truth or falsity of $\Sigma_{k}^{1}$-formulas; we also define 
the system $\SIXK$ as $\RCAo+(S_{k}^{2})$, where  $(S_{k}^{2})$ expresses that the functional $S_{k}^{2}$ exists.

\smallskip

\noindent
Thirdly, full second-order arithmetic $\Z_{2}$ is readily derived from the sentence:
\be\tag{$\exists^{3}$}
(\exists E^{3}\leq_{3}1)(\forall Y^{2})\big[  (\exists f^{1})Y(f)=0\asa E(Y)=0  \big], 
\ee
and we define $\Z_{2}^{\Omega}\equiv \RCAo+(\exists^{3})$, which is a conservative extension of $\Z_{2}$ by \cite{hunterphd}*{Cor.~2.6}.   The (unique) functional from $(\exists^{3})$ is also called `$\exists^{3}$', and we will use a similar convention for other functionals.  

\smallskip
\noindent
Fourth, there is primitive recursive function $U$ such that `$U(e,k,n)=_{0}m+1$' expresses that the $e$-th Turing machine with input $k$ halts after $n$ steps with output $m$.  
By definition, Feferman's $\mu^{2}$ provides an upper bound on this $n$ \emph{if it exists}, i.e.\ we can use $\mu^{2}$ to solve the Halting problem.  Similarly, Gandy's \emph{superjump} solves the Halting problem for 
higher-order computability as follows: 
\be\tag{$\SJ^{3}$}
\SJ(F^{2},e^{0}):=
\begin{cases}
0 & \textup{ if $\{e\}(F)$ terminates}\\
1 & \textup{otherwise}
\end{cases},
\ee where $e$ is an S1-S9-index.
A characterisation of $\SJ$ in terms of discontinuities may be found in \cite{hartjeS}.  
Clearly, the above functionals are natural counterparts of (set-based) comprehension axioms in a functional-based language. 

\smallskip

Fifth, recall that the Cousin lemma from Remark \ref{bthm} states the existence of a finite sub-cover for an open cover of the unit interval. 
Since Cantor space is homeomorphic to a closed subset of $[0,1]$, the former inherits the same property.  
In particular, for any $G^{2}$, the corresponding `canonical cover' of $2^{\N}$ is $\cup_{f\in 2^{\N}}[\overline{f}G(f)]$ where $[\sigma^{0^{*}}]$ is the set of all binary extensions of $\sigma$.  By compactness, there is a finite sequence $\langle f_0 , \ldots , f_n\rangle$ such that the set of $\cup_{i\leq n}[\bar f_{i} G(f_i)]$ still covers $2^{\N}$.  
We now introduce the specification $\SCF(\Theta)$ for a (non-unique) functional $\Theta$ which computes such a finite sequence.  
We refer to such a functional $\Theta$ as a \emph{realiser} for the compactness of Cantor space, and simplify its type to `$3$' to improve readability.
\bdefi\label{dodier}
The formula $\SCF(\Theta)$ is as follows for $\Theta^{2\di 1^{*}}$:
\be\label{normaal}
(\forall G^{2})(\forall f^{1}\leq_{1}1)(\exists g\in \Theta(G))(f\in [\overline{g}G(g)]).
\ee
where `$f\in [\overline{g}G(g)]$' is the quantifier-free formula $\overline{f}G(g)=_{0^{*}}\overline{g}G(g)$.
\edefi
Clearly, there is no unique $\Theta$ as in \eqref{normaal} (just add more binary sequences to $\Theta(G)$); nonetheless, 
we have in the past referred to any $\Theta$ satisfying $\SCF(\Theta)$ as `the' \emph{special fan functional} $\Theta$, and we will continue this abuse of language.  
We shall however repeatedly point out the non-unique nature of the special fan functional $\Theta$ in the following.   While $\Theta$ may appear exotic at first, it provides the only method we can think of for computing gauge integrals \emph{in general}, as discussed in Remark~\ref{engauged}.   
As to its provenance, $\Theta$ was introduced as part of the study of the \emph{Gandy-Hyland functional} in \cite{samGH}*{\S2} via a slightly different definition.  
These definitions are identical up to a term of G\"odel's $T$ of low complexity.  

\smallskip

Finally, we should discuss why the above systems involving the `$\omega$' superscripts are the `right' (or at least `good') higher-order analogues of the correspoding second-order systems.
We also discuss the special case of $\Z_{2}^{\Omega}$ and second-order arithmetic.    
\begin{rem}\label{XXZ}\rm
First of all, Kohlenbach introduces $\RCAo$ in \cite{kohlenbach2} as the base theory for higher-order RM and proves that it is conservative over $\RCA_{0}$ up to language.    
Hence, it makes sense to similarly use the superscript `$\omega$' to denote the higher-order counterparts of subsystems of second-order arithmetic $\Z_{2}$

\smallskip

Secondly, most of the aforementioned systems with superscript `$\omega$' are known conservative extensions (for at least $\Pi_{2}^{1}$-formulas) of their second-order counterparts. 
For $\RCAo$, this follows from \cite{kohlenbach2}*{Prop.\ 3.1}.
For $\ACAo$ and $\FIVE^{\omega}$, this follows from \cite{yamayamaharehare}*{Theorem 2.2}, while for $\Z_{2}^{\omega}$ and $\Z_{2}^{\Omega}$ this follows from \cite{hunterphd}*{Cor.\ 2.6}. 
Similar results for $\SIXK$ can be obtained in the same way.  

\smallskip

Thirdly, as noted below Figure \ref{xxy} in Section \ref{kodel}, $\Z_{2}^{\Omega}$ is placed \emph{between} the medium and strong range.  The motivation is that the combination of the recursor $\textsf{R}_{2}$ from G\"odel's $T$ and $\exists^{3}$ yields a system stronger than $\Z_{2}^{\Omega}$.   
On the other hand, the system $\Z_{2}^{\omega}$ does not suffer from this problem, and we therefore believe that the latter is the `right' higher-order analogue of second-order arithmetic $\Z_{2}$.
\end{rem}
\section{Main results I}\label{fullviewdownmainstreet1}
We establish our main results as sketched in Section \ref{basic}.   
We treat the Cousin lemma in full detail in Section \ref{CL}, while similar `covering theorems' from Remark~\ref{bthm} 
are treated analogously in Section \ref{korkske}.  We show in Section \ref{introgau} that the Cousin lemma is equivalent to various basic properties of the \emph{gauge integral}.
In Section \ref{klipel}, we derive the Cousin lemma from the following generalisation of the Bolzano-Weierstrass theorem: \emph{every net in the unit interval has a convergent sub-net}. 
Nets (aka Moore-Smith sequences) provide a generalisation of the concept of sequences beyond countable index sets, going back a century (\cites{moorsmidje,moorelimit2}).

\subsection{Cousin lemma}\label{CL}
Cousin first proved (what is now known as) the \emph{Cousin lemma} before 1893 (\cite{dugac1}).  
This lemma essentially expresses that $I=[0,1]$ is Heine-Borel compact, i.e.\ that any open cover of $I$ has a finite sub-cover.  
The goal of this section is to establish that, despite its seemingly elementary nature, the Cousin lemma can only be proved in full second-order arithmetic, as sketched in Section~\ref{basic}.  
This should be contrasted with the restriction to \emph{countable} covers, which may be proved in the weak fragment $\WKL_{0}$ by \cite{simpson2}*{IV.1.2}).

\smallskip

First of all, a functional $\Psi:\R\di \R^{+}$ gives rise to the (uncountable) \emph{canonical} open cover $\cup_{x\in I} I_{x}^{\Psi}$ where $I_{x}^{\Psi}$ is the open interval $(x-\Psi(x), x+\Psi(x))$.  
Hence, the Cousin lemma implies that $\cup_{x\in I} I_{x}^{\Psi}$ has a finite sub-cover; in symbols:
\be\tag{$\HBU$}
(\forall \Psi:\R\di \R^{+})(\exists \langle y_{1}, \dots, y_{k}\rangle)(\forall x\in I)(\exists i\leq k)(x\in I_{y_{i}}^{\Psi}).
\ee
Note that $\HBU$ makes use of the original formulation by Cousin as in \eqref{coukie}.  We show in \cite{sahotop}*{\S3.4} that $\HBU$ sports a certain robustness, in that its logical properties do not depend on the exact choice of definition of cover.  

\smallskip

The main goal of this section is to prove the following theorem, which establishes that full 
second-order arithmetic is needed to prove the Cousin lemma as in $\HBU$. 
\begin{thm}\label{main1}
 $\Z_{2}^{\Omega}+\QFAC^{0,1}$ proves $\HBU$; no system $\SIXK$ proves $\HBU$.
\end{thm}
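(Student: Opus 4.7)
I would split the proof into its two halves: a positive half ($\Z_{2}^{\Omega}+\QFAC^{0,1}\vdash\HBU$) and a negative half (no $\SIXK$ proves $\HBU$), which call for quite different techniques.

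For the positive half, fix $\Psi:\R\to\R^{+}$ and work inside $\Z_{2}^{\Omega}+\QFAC^{0,1}$. My plan is a dyadic K\"onig-style bisection enabled by $(\exists^{3})$. For $\sigma\in 2^{<\N}$ let $J_{\sigma}\subseteq [0,1]$ denote the standard closed dyadic subinterval of length $2^{-|\sigma|}$ coded by $\sigma$. The statement ``$J_{\sigma}$ admits a finite sub-cover by intervals $I_{y}^{\Psi}$'' only quantifies over (finite sequences of) reals with an arithmetic matrix, so $(\exists^{3})$ decides it; let $T\subseteq 2^{<\N}$ collect those $\sigma$ for which it fails. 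Then $T$ is downward closed (a sub-cover of $J_{\sigma}$ restricts to one of each half), and if $\sigma\in T$ then $\sigma\ast 0\in T$ or $\sigma\ast 1\in T$ (otherwise two finite sub-covers concatenate into one for $J_{\sigma}$). Suppose towards contradiction that $\langle\rangle\in T$; then $T$ is an infinite binary tree, and $\WKL$ (available inside $\Z_{2}^{\Omega}$) produces a path $\alpha$ coding some $x^{*}\in [0,1]$. Since $\Psi(x^{*})>0$, once $2^{-n}<\Psi(x^{*})$ one has $J_{\overline{\alpha}n}\subset I_{x^{*}}^{\Psi}$, so $\{x^{*}\}$ is itself a finite sub-cover of $J_{\overline{\alpha}n}$, contradicting $\overline{\alpha}n\in T$. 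Hence $\langle\rangle\notin T$, so $[0,1]$ admits a finite sub-cover; a final application of $\QFAC^{0,1}$ with a matrix quantifier-free in the presence of $(\exists^{3})$ uniformizes this existence into an explicit witness list $\langle y_{1},\dots,y_{k}\rangle$ for $\HBU$.

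For the negative half, I would argue model-theoretically: for each $k\geq 1$, exhibit a model of $\SIXK$ that refutes $\HBU$. The natural candidate is the full type structure $\mathbb{M}_{k}$ whose objects at each finite type are those hereditarily S1-S9 computable from $S_{k}^{2}$ (together with the usual countable parameter set); this structure validates $\RCAo$ and $(S_{k}^{2})$, hence $\SIXK$, essentially by construction. Refuting $\HBU$ inside $\mathbb{M}_{k}$ reduces, after uniformization via $\QFAC^{0,1}$, to the assertion that no realizer $\Theta$ for $\SCF(\Theta)$ in the sense of Definition~\ref{dodier}---in effect a realizer for Cousin's lemma on Cantor space---is Kleene-computable from $S_{k}^{2}$.

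The heart of the proof, and the principal obstacle, is exactly this non-computability claim. The plan is a diagonal-against-continuity argument tailored to S1-S9 recursion: given any putative Kleene index $e$ supposedly computing such a $\Theta$ from $S_{k}^{2}$, analyse the computation $\{e\}(G,S_{k}^{2})$ on a generic input $G^{2}$ and bound its ``effective reach'' in terms of the $\Pi_{k}^{1}$-information that $S_{k}^{2}$ can extract from $G$; then construct an $S_{k}^{2}$-computable adversarial $G^{*}$ whose behaviour outside this reach forces the returned finite list $\Theta(G^{*})$ to miss some binary branch of $2^{\N}$, contradicting $\SCF(\Theta)$. Making this precise will require a stage-comparison or Gandy-selection argument adapted to $S_{k}^{2}$-recursion, and it is here that the gap between the medium range of the G\"odel hierarchy and $\Z_{2}^{\Omega}$ depicted in Figure~\ref{xxy} genuinely has to be exploited.
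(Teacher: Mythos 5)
Your positive half is correct and takes a genuinely different route from the paper. You run the classical bisection proof of Heine--Borel directly, using $(\exists^{3})$ to decide, for each dyadic interval $J_{\sigma}$, the third-order statement ``$J_{\sigma}$ admits a finite sub-cover'', and then invoke $\WKL$ on the resulting tree of failures. The paper instead proves $\NFP$ in $\Z_{2}^{\Omega}+\QFAC^{0,1}$, derives the Lindel\"of lemma from it, and then applies the \emph{countable} Heine--Borel theorem of $\WKL_{0}$ inside $\Z_{2}$. Your argument is more self-contained and, notably, the existence claim $\langle\rangle\notin T$ already \emph{is} the conclusion of $\HBU$ by the definition of $T$, so your closing appeal to $\QFAC^{0,1}$ is superfluous; this is consistent with the paper's remark that $\QFAC^{0,1}$ is in fact eliminable from the positive direction.

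The negative half, however, has a genuine gap: the entire content of the theorem is the non-computability/independence claim that you explicitly defer, and the strategy you sketch for it is unlikely to work as stated. First, a logical point: to refute $\HBU$ in your structure $\mathbb{M}_{k}$ you must exhibit a \emph{single} cover $\Psi$ (or $G^{2}$ on Cantor space) lying in the model such that no finite sub-cover with witnesses from the model exists. Showing that no realiser $\Theta$ with $\SCF(\Theta)$ is Kleene-computable from $S_{k}^{2}$ does not by itself give this --- each individual cover in the model could still admit a finite sub-cover without there being a uniform realiser --- and the reduction you invoke ``after uniformization via $\QFAC^{0,1}$'' runs in the wrong direction. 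Second, the proposed diagonalisation against Kleene indices via a continuity-style bound on the ``effective reach'' of $\{e\}(G,S_{k}^{2})$ is problematic because $S_{k}^{2}$ is severely discontinuous; there is no compactness or modulus-of-continuity phenomenon to exploit, and an adversarial $G^{*}$ must be total while the computation may query it on arguments produced using $S_{k}^{2}$ and $G^{*}$ itself.

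The paper's actual construction is designed precisely to avoid these issues, and does not diagonalise against indices at all. It fixes a countable set $A\subseteq\N^{\N}$ for which all analytical statements are absolute, writes $A=\bigcup_{k}A_{k}$ with each $A_{k}$ the closure of finitely many functions under $S_{k}^{2}$-computability, and defines $F(f)=2^{-(k+2+e)}$ using a minimal index $e$ for $f$, so that $F$ is injective on each stratum. A direct measure computation then shows that $\bigcup_{f\in A}[\bar{f}F(f)]$ has measure at most $\sum_{k}2^{-(k+1)}<1$, so \emph{no} finite list from $A$ covers Cantor space under \emph{any} total extension $G$ of $F$. A closure lemma (the analogue of which is missing from your plan) shows that the type structure generated by $F$, the $S_{k}^{2}$'s and $A$ under S1--S9 adds no new type-1 objects, so this failure persists in the model --- and a single model handles all $k$ simultaneously. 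If you want to complete your proof, you should replace the diagonalisation by such a global measure-theoretic obstruction on a countable, computation-closed set of branches.
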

The first part is a necessity as otherwise the designation ``analysis'' for $\Z_{2}$ would be meaningless (\cite{overwinning}*{p.~291}).  
The second part constitutes a surprise: the restriction of $\HBU$ to \emph{countable} covers is equivalent to $\WKL_{0}$ (\cite{simpson2}*{IV.1}), a system with the (first-order) strength of $\RCAo$. Kohlenbach has introduced generalisations of $\WKL_{0}$ with properties similar to $\HBU$ (\cite{kohlenbach4}*{\S5-6}), but these axioms \emph{do not stem from mathematics}, i.e.\ are `purely logical'.  Furthermore, $\HBU$ is \emph{robust} (\cite{montahue}*{p.\ 432}) in that restricting the variable $x$ to the (Turing) computable reals or the rationals in $I$ does not make a difference.       
We now prove the first part of Theorem~\ref{main1}.
\begin{thm}\label{main1a}
The system $\Z_{2}^{\Omega}+\QFAC^{0,1}$ proves $\HBU$.
\end{thm}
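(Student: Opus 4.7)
The approach is a classical bisection / Heine--Borel argument internalised in $\Z_{2}^{\Omega}$. Fix $\Psi : \R \to \R^{+}$. The crux is that with $\exists^{3}$ available, the predicate
\[
C(a,b) \equiv (\exists \langle y_{1}, \ldots, y_{k}\rangle)(\forall x \in [a,b])(\exists i \leq k)(x \in I^{\Psi}_{y_{i}})
\]
is decidable uniformly in rational parameters $a,b$: after unfolding, all quantifiers range over type $0$ or type $1$ objects, and iterated application of $\exists^{3}$ decides any such $\Pi_{\infty}^{1}$-formula (in the sense of Definition \ref{ocharme}). This reduces the whole argument to a primitive recursion over $\N$.

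I would then argue by contradiction. Assume $\neg C(0,1)$. Using decidability of $C$, I build by primitive recursion a nested sequence $[a_{n}, b_{n}]$ of dyadic subintervals of $[0,1]$ with $b_{n} - a_{n} = 2^{-n}$ and $\neg C(a_{n}, b_{n})$ for every $n$. The inductive step rests on the observation that if $\neg C(a_{n}, b_{n})$ then at least one half of $[a_{n}, b_{n}]$ also fails $C$, since otherwise concatenating finite sub-covers of the two halves would produce a finite sub-cover of the whole interval; take the left half if it fails $C$, else the right. Because each $(a_{n}, b_{n})$ is a pair of dyadic rationals codeable in $\N$, this is a straightforward application of $\mathbf{R}_{0}$ with the $\exists^{3}$-defined decision procedure as parameter.

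The sequence $(a_{n})$ is fast-convergent Cauchy and so codes a real $x_{0} \in [0,1]$. Since $\Psi(x_{0}) > 0$, I choose $N$ with $2^{-N+1} < \Psi(x_{0})$ and $|x_{0} - a_{N}| < \Psi(x_{0})/2$; then $[a_{N}, b_{N}] \subseteq I^{\Psi}_{x_{0}}$, so the singleton $\langle x_{0}\rangle$ is a finite sub-cover of $[a_{N}, b_{N}]$, contradicting $\neg C(a_{N}, b_{N})$. Hence $C(0,1)$, which is exactly $\HBU$ for this $\Psi$.

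The role of $\QFAC^{0,1}$ is the usual one in Kohlenbach's setting: packaging the bisection choices (and the Cauchy representative of $x_{0}$) into genuine type $0 \to 1$ functionals, which is presumably what the remark pointing to \cite{dagsamV} about eliminating $\QFAC^{0,1}$ refers to. I expect the main subtlety to lie not in the classical content of the argument, but in verifying that the informal bisection, though driven by a discontinuous $\exists^{3}$-decision and applied to a possibly discontinuous $\Psi$, still yields a limit $x_{0}$ at which $\Psi$ is defined and positive. No continuity of $\Psi$ is required — only that $\Psi$ is a functional $\R \to \R^{+}$ in the sense of Definition \ref{keepintireal}, and this is exactly what makes the final contradiction go through.
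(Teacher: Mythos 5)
Your proof is correct, but it takes a genuinely different route from the paper's. The paper derives $\HBU$ by a chain through two other items of Remark \ref{bthm}: it first proves the Neighbourhood Function Principle $\NFP$ in $\Z_{2}^{\Omega}+\QFAC^{0,1}$ (treating the matrix as quantifier-free via $\exists^{3}$ and extracting an associate), derives the Lindel\"of lemma $\LIND$ from $\NFP$ to obtain a \emph{countable} sub-cover, and then invokes the second-order Heine--Borel theorem for countable covers (\cite{simpson2}*{IV.1.2}, available since $(\exists^{3})$ yields full $\Z_{2}$) to pass to a finite sub-cover. You instead internalise the classical bisection argument directly: $\exists^{3}$ (with $\mu^{2}$ to handle the arithmetical matrix involving $\Psi$) decides the predicate ``$[a,b]$ admits a finite sub-cover'' uniformly in dyadic endpoints, primitive recursion then produces the nested sequence of bad intervals, and the limit point gives the contradiction. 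Both arguments are sound; the trade-off is that the paper's detour buys it proofs of $\NFP$ and $\LIND$ (which it needs anyway for Section \ref{korkske} and Theorem \ref{main2}), whereas your argument is more elementary and self-contained --- and, notably, it nowhere uses $\QFAC^{0,1}$: the bisection is driven by a decidable predicate and the limit real is explicitly definable, so no choice is needed. Your closing speculation about the role of $\QFAC^{0,1}$ is therefore slightly off for your own proof (it is the paper's route, via extracting a sequence of reals for the countable sub-cover, that uses choice); in effect your argument already anticipates the choice-free proof of $\HBU$ in $\Z_{2}^{\Omega}$ that the paper defers to \cite{dagsamV}.
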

\begin{proof}
We only sketch the proof as it makes use of items from Remark \ref{bthm} to be studied in Section \ref{korkske}.
A full proof may be found in Theorem \ref{main2}.
Now, to derive $\HBU$, we note that the \emph{Lindel\"of lemma} provides a \emph{countable} sub-cover for any open cover of $I$.  Since $(\exists^{3})$ immediately implies $\Z_{2}$, we may use \cite{simpson2}*{IV.1.2}, which 
implies that every countable open cover has a finite sub-cover.  What remains is to prove the Lindel\"of lemma, which readily follows from the \emph{Neighbourhood function principle} \textsf{NFP}, i.e.\ item \eqref{NFP} in Remark \ref{bthm}, as will become clear in the proof of Theorem \ref{main2}.  
In turn, \textsf{NFP} has a straightforward proof in $\Z_{2}^{\Omega}+\QFAC^{0,1}$, as will also become clear in the proof of Theorem \ref{main2}.  
\end{proof}
As noted above, we shall make use of computability theory to establish Theorem~\ref{main1}.  
Hence, we first show that $\HBU$ is equivalent to the existence of the special fan functional $\Theta$ in Theorem \ref{nolapdog}.  Theorem \ref{main1} will then be established
by showing that models of $\SIXK$ do not always contain $\Theta$ as in Theorem \ref{realmain1}.  Note that the functional $\Omega$ as in \eqref{1337} is called a \emph{realiser} for $\HBU$.
\begin{thm}\label{nolapdog}
$\ACAo+\QFAC^{2,1}$ proves $(\exists \Theta)\SCF(\Theta)\asa \HBU \asa \eqref{1337}$, where
\be\label{1337}
(\exists \Omega^{2\di 1^{*}})(\forall \Psi:\R\di \R^{+})(\forall x\in [0,1])(\exists y\in \Omega(\Psi)(x\in I_{y}^{\Psi}).
\ee
\end{thm}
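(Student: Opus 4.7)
The plan is to establish the cycle
$(\exists \Theta)\SCF(\Theta) \Rightarrow \eqref{1337} \Rightarrow \HBU \Rightarrow (\exists \Theta)\SCF(\Theta)$
via the standard surjection $r : 2^{\N}\to [0,1]$ given by $r(f) := \sum_{n\in\N} f(n)/2^{n+1}$. The middle implication is immediate: instantiating the realiser $\Omega$ from \eqref{1337} at a given $\Psi$ directly provides the finite sub-cover asserted by $\HBU$.

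For $(\exists \Theta)\SCF(\Theta) \Rightarrow \eqref{1337}$, fix $\Psi : \R\to\R^{+}$ and use Feferman's $\mu$ (available in $\ACAo$) to define $G_{\Psi}^{2}$: for $f\leq_{1}1$, let $G_{\Psi}(f)$ be the least $n$ such that the closed dyadic interval $r([\overline{f}n])$ is contained in $I_{r(f)}^{\Psi}$, extending $G_{\Psi}$ by any default value elsewhere on Baire space. Applying $\Theta$ to $G_{\Psi}$ yields $\langle f_{1},\dots,f_{k}\rangle$ with $(\forall f\leq_{1}1)(\exists i\leq k)(f\in [\overline{f_{i}}G_{\Psi}(f_{i})])$, and I set $\Omega(\Psi) := \langle r(f_{1}),\dots,r(f_{k})\rangle$. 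Verification is direct: any $x\in [0,1]$ has a binary representative $f$ with $r(f)=x$, some $i\leq k$ satisfies $f\in [\overline{f_{i}}G_{\Psi}(f_{i})]$, and therefore $x = r(f)\in r([\overline{f_{i}}G_{\Psi}(f_{i})])\subseteq I_{r(f_{i})}^{\Psi}$.

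For $\HBU \Rightarrow (\exists \Theta)\SCF(\Theta)$, I would reverse the bridge. Given $G^{2}$, define $\Psi_{G}:\R\to\R^{+}$ so that for $x\in[0,1]$ with a chosen binary expansion $f_{x}$, the interval $I_{x}^{\Psi_{G}}$ lies strictly inside the dyadic interval $r([\overline{f_{x}}G(f_{x})])$; at dyadic rationals one shrinks $\Psi_{G}(x)$ to fit inside both associated dyadic intervals. Applying $\HBU$ to $\Psi_{G}$ yields $\langle y_{1},\dots,y_{m}\rangle$ covering $[0,1]$, from which one reads off binary sequences $\langle h_{1},\dots,h_{n}\rangle$ whose $G$-neighborhoods $[\overline{h_{j}}G(h_{j})]$ cover $2^{\N}$. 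This establishes $(\forall G^{2})(\exists T^{1})[\,T\text{ is a $G$-sub-cover of }2^{\N}\,]$. Crucially, by compactness of $2^{\N}$, the bracketed predicate is provably equivalent in $\RCAo$ to the quantifier-free statement that every binary string of length $\max_{i\leq |T|}G(T(i))$ extends some $\overline{T(i)}G(T(i))$; hence $\QFAC^{2,1}$ yields the desired $\Theta$ satisfying $\SCF(\Theta)$.

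The principal obstacle is the careful handling of dyadic rationals in both bridging constructions: such a point $x\in[0,1]$ has two binary expansions, and the cylinder images $r([\sigma])$ are closed dyadic intervals whose endpoints are themselves dyadic rationals, so the transfer between $2^{\N}$-covers and $[0,1]$-covers is not quite one-to-one at the boundary. Shrinking $\Psi_{G}(x)$ at dyadic $x$ and using the \emph{closed} dyadic interval in the definition of $G_{\Psi}$ resolves these edge cases, provided one verifies that $\Psi_{G}$ and $G_{\Psi}$ respect the extensional equalities $=_{\R}$ and $=_{1}$ required for them to be legitimate higher-order objects, and that $G_{\Psi}$ is defined on all of $\N^{\N}$ so that $\Theta$ may be applied.
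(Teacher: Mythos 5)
Your proposal is correct and its skeleton matches the paper's: decide coverhood arithmetically (using Feferman's $\mu$, available in $\ACAo$) so that $\QFAC^{2,1}$ can extract the required functional, and transfer canonical covers between $2^{\N}$ and $[0,1]$. The one genuine difference lies in the transfer mechanism for the direction $\HBU\Rightarrow(\exists\Theta)\SCF(\Theta)$. You use the dyadic surjection $r$ in both directions and therefore must fight the double-expansion problem at dyadic rationals, which you correctly identify as the principal obstacle and resolve by splitting $I^{\Psi_G}_x$ across the two adjacent dyadic intervals and including both expansions among the read-off sequences. The paper instead uses an \emph{asymmetric} pair of maps: the dyadic projection $\xi=r$ for passing from Cantor-space compactness to $[0,1]$ (where surjectivity is all that matters, exactly as in your $G_\Psi$ construction), but the homeomorphism $\zeta(f)=\sum_i 2f(i)3^{-(i+1)}$ onto the ternary Cantor set $C^c$ for the converse direction, setting $\Psi_F(x)$ equal to the distance from $x$ to $C^c$ off that set. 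Since $\zeta$ is injective, the endpoint pathology you handle by hand simply does not arise, at the modest cost of having to cover the complement of $C^c$. Both routes work; the paper's is cleaner at the dyadic rationals, while yours is slightly more economical in that it produces the realiser $\Omega$ of \eqref{1337} from $\Theta$ by explicit composition and so invokes $\QFAC^{2,1}$ only once (for $\Theta$), whereas the paper applies it separately to obtain $\Omega$ from $\HBU$ and $\Theta$ from the compactness of Cantor space. Your reduction of the coverhood predicate for $2^{\N}$ to a quantifier-free condition on strings of length $\max_{i}G(T(i))$ is exactly the decidability observation the paper makes before applying choice, so no gap remains there.
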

\begin{proof}
We first point out two useful properties of Feferman's $\mu$: the axiom $(\mu^{2})$ defining the latter functional is equivalent to the existence of $F:\R\di\R$ such that $F(x)=1$ if $x>_{\R}0$, and $0$ otherwise (\cite{kohlenbach2}*{\S3}).  Furthermore, by repeatedly applying $\mu$, we can show that any arithmetical formula is equivalent to a quantifier-free one.  We also recall the notation `$f\in [\sigma]$' for covers of Cantor space from Definition \ref{dodier}.  

\smallskip 
 
Based on the previous, given $\Psi,y_1 , \ldots , y_k$ as in $\HBU$, we can decide if the intervals $I^\Psi_{y_i}$ form an open covering or not: we just check (using $\mu$) how the end-points of these intervals are interleaved.  Thus, using $\mu$ as a parameter, we can deduce \eqref{1337} from $\HBU$ by $\QFAC^{2,1}$. Likewise, given $f_1 , \ldots , f_n\leq_{1}1$ and $k_1 , \ldots ,k_n$ in $\N$, we can decide if the set of neighbourhoods $[\bar f_ik_i]$ form a covering or not; hence, we may use $\QFAC^{2,1}$ to similarly obtain $\Theta$ from the compactness of Cantor space.

\smallskip

Now define $\xi(f) = \sum_{i \in \N}f(i)\cdot 2^{-(i+1)}$ and $\zeta(f) = \sum_{i \in \N}2f(i)\cdot 3^{-(i+1)}$ for $f\in \{0,1\}^{\N}$; note that $\xi$ is a continuous projection of $\{0,1\}^\N$ to $[0,1]$, while $\zeta$ is the homeomorphism between $\{0,1\}^\N$ and the classical Cantor space $C^c$.  Using $\xi$ and $\zeta$, we can convert canonical covers between $I$ and Cantor space as follows:  
\begin{itemize}
\item For $\Psi:[0,1] \rightarrow \R^+$, define $F_\Psi(f)$ as the least $n$ such that $[\bar f n] \subseteq \xi^{-1}(I_{\xi(f)}^\Psi)$.
\item For $F:\{0,1\}^\N \rightarrow \N$, we define $\Psi_F(x)$ as the distance from $x$ to $C^c$ if $x \not \in C^c$, and as the least rational (in some canonical enumeration of $\Q^+$) $q$ such that $\zeta^{-1}((x - q,x+q)) \subseteq [\overline{\zeta^{-1}(x)}F(\zeta^{-1}(x))]$ if $x \in C^c$.
\end{itemize}
These constructions are arithmetical, and the compactness property for the associated coverings are transferred from one space to the other in both directions.
\end{proof}
From the proof, we may also conclude that there is a term $t$ such that if $\SCF(\Theta)$ and $\Omega := t(\Theta,\mu)$ then $\Omega$ satisfies \eqref{1337}, and conversely, there is a term $s$ such that if $\Omega$ satisfies \eqref{1337} and $\Theta := s(\Omega,\mu)$, then $\SCF(\Theta)$. 
The proof makes use of the \emph{Axiom of Choice} (as in $\QFAC$) to obtain a functional $\Theta$ as in $\SCF(\Theta)$, resp. $\Omega$ satisfying \eqref{1337}, from the existence of finite sub-coverings.  
Nonetheless, a careful analysis of known proofs of $\HBU$ yields such  functionals $\Theta$ and $\Omega$ \emph{without the Axiom of Choice}.
We discuss this in more detail in Remark \ref{ThetaK} below.  
Finally, we point out that $\ACAo+\QFAC$ is also $\Pi_{2}^{1}$-conservative over $\ACA_{0}$ by \cite{yamayamaharehare}*{Theorem 2.2}.

\smallskip

To establish Theorem \ref{main1}, we now exhibit a model (aka type structure) of $\SIXK$ in which there is no special fan functional and in which $\HBU$ fails; hence $\SIXK$ cannot prove $\HBU$ by the soundness theorem. 
\begin{thm}\label{realmain1}
There is a type structure validating $\SIXK$ \(for all $k$\), and at the same time satisfying $(\forall \Theta^{3})\neg\SCF(\Theta)$ and $\neg\HBU$.  
\end{thm}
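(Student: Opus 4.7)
The plan is to exhibit a type structure $\mathcal{M}$ that models every $\SIXK$ yet contains no witness to $\SCF$ at type $3$; by the soundness theorem this yields both conclusions, and by Theorem~\ref{nolapdog} it also yields $\neg\HBU$ in $\mathcal{M}$. I would take $\mathcal{M}$ to consist, at each finite type, of precisely those functionals that are Kleene S1-S9-computable from the family $\langle S_k^{2}\rangle_{k\geq 1}$ together with the base functionals (Feferman's $\mu^{2}$, the recursors $\mathbf{R}_{\rho}$, and the combinators). Types $0$ and $1$ are the full sets $\N$ and $\N^{\N}$, while from type $2$ upwards only the S1-S9-computable functionals (relative to the parameters) are admitted.

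Next I would verify that $\mathcal{M}$ is a model of $\RCA_{0}^{\omega}+(S_{k}^{2})$ for every $k$. The inclusion of each $S_{k}^{2}$ in $\mathcal{M}$ is immediate from the construction, so $(S_{k}^{2})$ holds automatically. The axioms of $\RCA_{0}^{\omega}$ - basic arithmetic, the combinator axioms, the defining equations of $\mathbf{R}_{0}$, extensionality $(\textsf{E}_{\rho,\tau})$ at the relevant types, quantifier-free induction, and $\QFAC^{1,0}$ - all go through because S1-S9-computability is closed under composition, primitive recursion, and the $\Pi/\Sigma$ combinators, and because quantifier-free choice over type $1$ parameters can be realised by an S1-S9 search relative to the given parameters.

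The main technical step is to show that no $\Theta^{3}\in\mathcal{M}$ satisfies $\SCF(\Theta)$. The strategy is to argue that any such $\Theta$, combined with parameters already in $\mathcal{M}$, would compute a functional provably outside the Kleene closure of $\langle S_{k}^{2}\rangle_{k\geq 1}$. Concretely, one uses the fact (relativised from the result foreshadowed in Section \ref{RMR}, where $\Theta$ together with the Suslin functional $S^{2}$ yields Gandy's superjump $\SJ^{3}$) that $\Theta$ together with $S_{k}^{2}$ would produce a functional reducing the halting problem for higher-order computations in $S_{k}^{2}$; since such a halting problem is not S1-S9-computable in $S_{k}^{2}$ itself, no such $\Theta$ can lie in $\mathcal{M}$.

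The chief obstacle is precisely this non-computability statement: one must rule out that the whole tower $\langle S_{k}^{2}\rangle_{k\geq 1}$ jointly suffices to realise $\Theta$. I would attack this either by a direct diagonal argument - for each candidate index $e$ for $\Theta$ relative to finitely many $S_{k}^{2}$'s, construct a $G^{2}\in\mathcal{M}$ defeating the finite list returned by $\{e\}^{S_{k_{1}}^{2},\dots,S_{k_{n}}^{2}}(G)$ - or by invoking the superjump-based argument as above in a form that sees $\bigoplus_{k}S_{k}^{2}$ as a single parameter and shows that $\SJ$ relative to $\bigoplus_{k}S_{k}^{2}$ still properly extends it. Either route requires careful bookkeeping of termination of S1-S9 computations, and this is where I expect the real work to lie.
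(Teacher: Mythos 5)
There is a genuine gap, and it lies at the very first step: your choice of type structure. You take the type-$1$ level to be all of $\N^{\N}$ and only restrict the model from type $2$ upwards. But the existential witness in $\HBU$ is a \emph{finite sequence of reals}, i.e.\ an object of type $1$ (or $1^{*}$). For any $\Psi:\R\di\R^{+}$ lying in your model, genuine (metatheoretic) Heine--Borel compactness supplies a true finite subcover $\langle y_{1},\dots,y_{k}\rangle$, and since your type-$1$ level is full, this witness is already present in the model. Hence your structure \emph{satisfies} $\HBU$ rather than $\neg\HBU$, and the theorem cannot be obtained this way. Your superjump route could conceivably still rule out a \emph{uniform} $\Theta$ inside the model, since $\SCF(\Theta)$ asks for a single type-$3$ object and the model need not satisfy enough choice to extract one from $\HBU$; but the theorem demands $\neg\HBU$ as well, and that half is irretrievably lost once $\mathrm{Tp}_{1}=\N^{\N}$. (Note also that inferring $\neg\HBU$ from $(\forall\Theta)\neg\SCF(\Theta)$ via Theorem \ref{nolapdog} requires the model to satisfy $\QFAC^{2,1}$, which you have not verified.)

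The paper's proof avoids exactly this by shrinking the type-$1$ level: $\mathrm{Tp}_{1}$ is a \emph{countable} set $A\subseteq\N^{\N}$ for which all analytical statements are absolute, written as an increasing union $\bigcup_{k}A_{k}$ with $A_{0}$ the hyperarithmetical functions and each $A_{k}$ the closure of a finite set under computability in $S^{2}_{k}$. One then defines a single $F^{2}$ on $A$ assigning to each $f$ in the $k$-th layer a neighbourhood $[\overline{f}F(f)]$ so long (using a minimal index $e$ for $f$) that $F$ is injective on each layer and any finite list from $A$, being contained in some $\bigcup_{k\leq m}A_{k}$, induces a union of basic neighbourhoods of measure strictly below $1$; consequently no finite list from $A$ yields a subcover, no matter how $F$ is extended to a total functional. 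Closing $A$, $F$ and the restrictions of the $S^{2}_{k}$ under S1--S9 (Theorem \ref{6.3}) then gives a model of every $\SIXK$ in which both $(\exists\Theta)\SCF(\Theta)$ and $\HBU$ fail outright, with no need to decide whether $\Theta$ is computable from the $S^{2}_{k}$'s over the full continuum. This measure-theoretic layering over a countable type-$1$ level is the essential idea, and it is absent from your proposal; the ``real work'' you defer would at best address half of the statement.
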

\begin{proof}
We introduce a family of type structures validating $(\forall \Theta^{3})\neg\SCF(\Theta)$.  Theorem~\ref{6.3} below tells us that one of those structures contains all $S_{k}^{1}$ and is closed under S1-S9, establishing the theorem. 
Intuitively speaking, we start from a $\beta$-model $A$ and have that any functional $G:A \rightarrow \N$ which is computable in some $S^2_k$ and elements from $A$ will be total over $\N^\N$ by the same algorithm.  
By absoluteness, there are $f_1 , \ldots , f_n$ in $A$ inducing a covering of $2^{\N}$ of the standard form. Since it is flexible which objects of type 2 we include in an extension of $A$ to a typed structure, $A$ together with the $S^2_k$'s cannot ``decide" whether there is $\Theta$ as in $\SCF(\Theta)$.

\smallskip

Let $A \subseteq \N^\N$ be a countable set such that all $\Pi^1_k$-statements with parameters from $A$ are absolute for $A$.
Also, let $S^2_k$ be the characteristic function of a complete $\Pi^1_k$-set for each $k$; we also write $S^2_k$ for the restriction of this functional to $A$. 
Clearly, for $f \in \N^\N$ computable in any $S^2_k$ and some $f_1 , \ldots , f_n$ from $A$, $f$ is also in $A$.
\begin{convention}\label{connie}\rm
Since $A$ is countable, we write $A$ as the increasing union $ \bigcup_{k \in \N}A_n$ where $A_0$ consists of the hyperarithmetical functions and for $k > 0$ we have:
\begin{itemize}
\item There is an element in $A_k$ enumerating $A_{k-1}$.
\item $A_k$ is the closure of a finite set $g_1 , \ldots , g_{n_k}$ under computability in $S^2_k$.
\end{itemize}
For the sake of uniform terminology, we rename $\exists^2$ to $S^2_0$ and let the associated finite sequence $g_1 , \ldots , g_{n_0}$ be the empty list. 
\end{convention}
We now define the functional $F^{2}$ on $A$ as follows.  
\bdefi[The functional $F$]\label{effkes}
Define $F(f)$ for $f \in A$ as follows:
\begin{itemize} 
\item If $f \not \in 2^\N$, put $F(f) := 0$.
\item If $f \in 2^{\N}$, let $k$ be minimal such that $f \in A_k$. We put $F(f): = 2^{-(k+2+e)}.$ where $e$ is a `minimal' index for computing $f$ from $S^2_k$ and $\{g_1 , \ldots , g_{n_k}\}$ as follows:
the ordinal rank of this computation of $f$ is minimal and $e$ is minimal among the indices for $f$ of the same ordinal rank.
\end{itemize}
\edefi
By definition, $F$ as in Definition \ref{effkes} is injective on $A_0$ and on each set $A_{k+1} \setminus A_k$. Moreover, if ${\bf m}$ is the usual measure on $2^{\N}$, we see that
\be\label{koriou}\textstyle
{\bf m}\big(\bigcup_{f \in A_0}[\bar fF(f)]\big) \leq 2^{-1} \textup{ and }{\bf m}\big(\bigcup_{f \in A_{k+1} \setminus A_k}[\bar fF(f)]\big) \leq 2^{-(k+2)}.
\ee
As a consequence, if $F$ is extended to a total functional $G$ and $\Theta$ satisfies $\SCF(\Theta)$, then $\Theta(G)$ cannot be a finite list from $A$. 
Similarly, a finite sequence $\langle f_{1}, \dots, f_{n} \rangle$ in $A$ is already in some $\cup_{k\leq m}A_{k}$, and \eqref{koriou} implies that $\cup_{i\leq n}[f_{i}\overline{G}(f_{i})]$ does not cover Cantor space, for any total extension $G$ of $F$.  

\smallskip

Thus, for any type structure ${\rm \textsf{\textup{Tp}}} = \{{\rm \textsf{\textup{Tp}}}_n\}_{n \in \N}$ where ${\rm \textsf{\textup{Tp}}}_0 = \N$, ${\rm \textsf{\textup{Tp}}}_1 = A$ and $F \in {\rm \textsf{\textup{Tp}}}_2$, there is no instance of $\Theta $ as in $\SCF(\Theta)$ in ${\rm \textsf{\textup{Tp}}}_3$.  To establish the theorem, we require one such type structure, containing each $S^2_k$ and $F$, and closed under Kleene's S1-S9; such a type structure is provided by Theorem \ref{6.3}, i.e.\ the latter establishes the theorem, and we are done.    
\end{proof}
For Theorem \ref{6.3}, we require some properties of $F^{2}$ from Definition \ref{effkes}.
\begin{lemma}[Properties of the functional $F$]\label{6.1} ~
\begin{enumerate}
\item  For each $k$, the restriction of $F$ to $A_k$ is computable in the functions $g_1 , \ldots , g_{n_k}$ from Convention \ref{connie}, and the functional $S^2_k$.  
\item Let $G$ be any total extension of $F$, let $f_1 , \ldots , f_m \in A$, and assume that the function $f$ is computable in $G$, $f_1 , \ldots, f_m$ and some $S^2_k$. Then also $f \in A$. 
\end{enumerate}
\end{lemma}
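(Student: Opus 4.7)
The plan is to handle the two items in order, with part (1) essentially a definability analysis, and part (2) a transfinite induction on the rank of a Kleene S1--S9 computation that invokes part (1) as a black box.

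For part (1), I would proceed by induction on $k$. Given $f \in A_{k}$, first use $\exists^{2} \leq S^{2}_{k}$ to decide whether $f \in 2^{\N}$; if not, $F(f) = 0$ trivially. Otherwise the task is to locate the least $j \leq k$ with $f \in A_{j}$, and then the least Kleene index $e$ (of least computational ordinal rank) that computes $f$ from $S^{2}_{j}$ and $g_{1}, \ldots, g_{n_{j}}$. The predicate ``$f$ is Kleene-computable from $S^{2}_{j}, \vec{g}$'' amounts to the existence of a well-founded computation tree, a $\Sigma^{1}_{1}$ statement relative to $S^{2}_{j}$; for $j \leq k$ such statements are decidable by $S^{2}_{k}$. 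A subsequent bounded search over indices and ranks, using $S^{2}_{k}$ to check well-foundedness, then produces the pair $(j, e)$, from which $F(f) = 2^{-(j+2+e)}$ is read off primitive-recursively.

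For part (2), I would first strengthen the conclusion to a uniform version amenable to rank induction. Fix $k$ and choose $k'$ with $f_{1}, \ldots, f_{m} \in A_{k'}$, and set $k^{*} := \max(k, k')$. The strengthened claim is that every type-$0$ or type-$1$ value produced by a subcomputation from parameters $G, \vec{f}, S^{2}_{k}$ lies in $A_{k^{*}}$. The base clauses (primitive recursion, reading a parameter, evaluating $S^{2}_{k}$) are immediate, since $A_{k^{*}}$ contains $\vec{f}$ and is closed under computability in $S^{2}_{k^{*}}$. The only substantive case is S9: to compute $G(h)$, one first produces $h$ via a strictly-smaller-rank subcomputation with the same parameters, so the inductive hypothesis gives $h \in A_{k^{*}}$; since $G$ extends $F$, we have $G(h) = F(h)$; and by part (1) the map $F \restriction A_{k^{*}}$ is itself a single S1--S9 procedure over $S^{2}_{k^{*}}$ and $g_{1}, \ldots, g_{n_{k^{*}}}$. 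Splicing this procedure in for every occurrence of $G$ rewrites the original computation as a pure S1--S9 computation from $S^{2}_{k^{*}}, \vec{g}$, whence $f \in A_{k^{*}} \subseteq A$.

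The main obstacle I anticipate is the splicing step in part (2): one must check that ``replace every $G$-call by the procedure of part (1)'' yields a genuine, single S1--S9 index rather than a heuristic recipe. This is exactly why the two clauses are proved as a package, since what is needed is the \emph{uniformity} of part (1)---a single index computing $F \restriction A_{k^{*}}$ from $S^{2}_{k^{*}}$ and $\vec{g}$---rather than mere pointwise computability. A secondary point is the ``least index of least rank'' minimisation inside part (1): one has to verify that well-foundedness of a $\Sigma^{1}_{1}(S^{2}_{j})$-definable computation tree, together with minimisation over its ranks, can be implemented by an S1--S9 algorithm in $S^{2}_{k}$, which is the standard fact that $\Pi^{1}_{1}$-in-$S^{2}_{j}$ questions sit inside the decision power of $S^{2}_{k}$ whenever $j \leq k$.
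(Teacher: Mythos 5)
Your treatment of part (2) is essentially the paper's argument, correctly fleshed out: reduce to a single level $k^{*}$, observe that $G$ agrees with $F$ on $A_{k^{*}}$, and use the uniformity of part (1) to absorb every $G$-application into a single S1--S9 computation relative to $S^2_{k^{*}}$ and parameters from $A_{k^{*}}$, whose closure under $S^2_{k^{*}}$-computability finishes the job. (A minor slip: the application of the type-two argument is scheme S8, not S9, but nothing hinges on this.)

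The gap is in part (1), precisely at the level $j=k$. You propose to find the minimal pair $(j,e)$ by a search in which $S^2_k$ decides, for each candidate $e$, whether $\{e\}(S^2_j,\vec g)=f$, on the grounds that the existence of a well-founded computation tree is ``$\Sigma^1_1$ relative to $S^2_j$'' and hence within the decision power of $S^2_k$ for $j\le k$. This complexity count fails where it matters most. The convergence predicate for Kleene computations relative to a type-two functional $H$ is the superjump of $H$, which by Gandy's theorem is never computable in $H$ itself; concretely, local correctness of a putative computation tree for $S^2_j$ involves $\Pi^1_j/\Sigma^1_j$ conditions at each S8-node, with well-foundedness on top, so the existence statement sits at roughly the level $\Sigma^1_{j+2}$ --- decidable by $S^2_{j+2}$, but not by $S^2_j$. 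For $j<k$ this loss can perhaps be absorbed, but for the main case $f\in A_k\setminus A_{k-1}$ your search would require deciding convergence of $S^2_k$-computations using $S^2_k$, which is impossible (indeed, the whole point of Theorem \ref{super} is that computing $\SJ$ from $S$ requires the additional ingredient $\Theta$). The paper avoids this by two devices absent from your argument: (i) the enumerations of $A_0,\dots,A_{k-1}$ guaranteed by Convention \ref{connie}, which reduce ``least $j$ with $f\in A_j$'' to an arithmetical question decidable with $\exists^2$; and (ii) \emph{Gandy selection} relative to $S^2_k$, which, given the \emph{promise} that some index for $f$ over $S^2_k,\vec g$ exists (i.e.\ that $f\in A_k$), selects an index of minimal ordinal rank computably in $S^2_k$ without ever deciding non-convergence. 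Note also that the same-level computability of $F\restriction A_k$ in $S^2_k$ is exactly what your own part (2) consumes: if part (1) only delivered computability in $S^2_{k+2}$, the levels would run away along the computation tree in the splicing step, so the two lost levels in your version are not harmless.
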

\begin{proof}
For the first part, we use induction on $k$. For $k = 0$, we use \emph{Gandy selection} (\cite{longmann}*{p.\ 210}) for $\exists^2$ which permits us to compute an $\exists^2$ index for each hyperarithmetical function. 
For $k > 0$, we use that $S^2_l$ is computable in $S^2_k$ when $l < k$ 
and that we have enumerations of each of the sets $A_0, \ldots, A_{k-1}$ computable in $g_1 , \ldots , g_{n_k}$ and $S^2_k$. 
Then we can apply the induction hypothesis for $f \in A_l$ for some $l < k$ and the Gandy selection method relative to $S^2_k$ for $f \in A_k \setminus A_{k-1}$.
For the second part, without loss of generality, we may assume that $f_1 , \ldots , f_m$ are all in $A_k$. By the first part of this lemma, $G$ restricted to $A_k$ is computable in $S^2_k$, and $A_k$ is closed under computations relative to $S^2_k$. The claim now follows.
\end{proof}
\begin{theorem}\label{6.3}
There is a type structure $\{{\rm \textsf{\textup{Tp}}}_n\}_{n \in \N}$, closed under Kleene's \textup{S1-S9}, such that ${\rm \textsf{\textup{Tp}}}_0 = \N$ and:
\begin{enumerate}
\item ${\rm \textsf{\textup{Tp}}}_1$ is a countable subset $A$ of $\N^\N$ such that all analytical statements \(i.e.\ any $\Pi^1_m$-sentence, for any $m$\) are absolute for $A$.
\item ${\rm \textsf{\textup{Tp}}}_2$ contains the restrictions of all $S^2_k$ to $A$.
\item There exists $F \in {\rm \textsf{\textup{Tp}}}_2$ inducing an open covering of $A$ for which there is no finite sub-covering in the type structure.
\end{enumerate}
\end{theorem}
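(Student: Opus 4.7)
The plan is to construct the desired type structure by iterating Kleene's S1-S9 schemes starting from the data already at hand. We set $\textsf{Tp}_0 := \N$ and $\textsf{Tp}_1 := A$. We first extend $F$ from $A$ to a total type-2 functional on $\N^{\N}$ (say by setting $F(f):=0$ off $A$) so that $F$ can serve as a genuine parameter; the behaviour outside $A$ will be irrelevant. We then define $\textsf{Tp}_2$ to consist of all type-2 functionals whose restriction to $A$ agrees with some object Kleene-computable from $F$, the sequence $\{S^2_k\}_{k\in\N}$, and finitely many parameters from $A$. For $n\geq 3$ we define $\textsf{Tp}_n$ analogously: all type-$n$ functionals on $\textsf{Tp}_{n-1}\times\cdots\times\textsf{Tp}_0$ whose graphs are Kleene-computable from $F$, $\{S^2_k\}_k$, and parameters from the previously built levels. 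Closure under S1-S9 will then be automatic by construction.

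The main obstacle is showing that this recipe is well-posed, in the sense that Kleene schemes applied to objects in the structure never produce type-1 values outside $A$. This is exactly where Lemma \ref{6.1}(2) is crucial: any $f\in \N^{\N}$ computable from any total extension of $F$, parameters $f_1,\ldots,f_m\in A$, and some $S^2_k$ automatically lies in $A$. Thus the S9 clause, which requires evaluating higher-type functionals at arbitrary subcomputations, never escapes $A$ at type 1; at type 0 the claim is trivial, and at higher types the closure is built into the definition. Consequently $\{\textsf{Tp}_n\}_{n\in \N}$ is a genuine type structure closed under S1-S9, and the restrictions of $S^2_k$ and $F$ to $A$ do indeed act as type-2 objects within it.

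It then remains to check the three numbered properties, and I expect this to be routine. Property~(1) is inherited from $A$, which was chosen so that all $\Pi^1_m$-statements with parameters from $A$ are absolute. Property~(2) is immediate from the construction of $\textsf{Tp}_2$, which takes each $S^2_k\!\restriction\! A$ as a generator. For property~(3), the functional $F\in \textsf{Tp}_2$ yields the canonical covering $\bigcup_{f\in A\cap 2^{\N}}[\bar f F(f)]$ of the Cantor space portion of $A$; any candidate finite sub-covering $\langle f_1,\ldots,f_n\rangle$ in the structure must have each $f_i\in A$, hence all $f_i$ lie in some single $\bigcup_{k\leq m}A_k$, and the measure estimate \eqref{koriou} gives
\[
\mathbf{m}\Bigl(\bigcup_{i\leq n}[\bar f_i F(f_i)]\Bigr)\;\leq\; 2^{-1}+\sum_{k=0}^{m}2^{-(k+2)}\;<\;1,
\]
contradicting that a covering of $2^{\N}$ must have measure $1$. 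The bulk of the technical effort therefore goes into carefully formulating the recursive definition of $\textsf{Tp}_n$ so that S9 is honoured in the intended sense; once this is in place, the three properties follow essentially by bookkeeping plus the absoluteness built into $A$ and the measure argument already present in the proof of Theorem \ref{realmain1}.
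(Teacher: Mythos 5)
Your proposal is correct and follows essentially the same route as the paper: define $\textsf{Tp}_0=\N$, $\textsf{Tp}_1=A$, and recursively take $\textsf{Tp}_n$ ($n\geq 2$) to be the functionals S1-S9-computable from $F$, the $S^2_k$'s, and parameters in $A$, with Lemma \ref{6.1}(2) guaranteeing that type-1 values never escape $A$ and the measure estimate \eqref{koriou} ruling out a finite sub-cover inside the structure. The paper's own proof is just a terser version of this, leaving the closure and property checks implicit, so your write-up simply supplies the details the paper omits.
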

\begin{proof}
The theorem expresses exactly what ${\rm \textsf{\textup{Tp}}}_n$ has to be for $n = 0$ and $n = 1$. For $n > 1$, we recursively let ${\rm \textsf{\textup{Tp}}}_n$ consist of all functionals $\phi:{\rm \textsf{\textup{Tp}}}_{n-1} \rightarrow \N$ that are S1-S9-computable in $F$, some $S^2_k$, and elements from $A$, where $F$ is as in Definition~\ref{effkes}. This type structure has the desired property.
Note that Feferman's $\mu$ is S1-S9-computable from $\exists^{2}$, and the former immediately yields $\QFAC^{1,0}$.  
\end{proof}
The proof of Theorem \ref{main1} is now done.  
As to the role of $\QFAC^{0,1}$, we show in \cite{dagsamV}*{\S4} that $\HBU$ is provable in $\Z_{2}^{\Omega}$, i.e.\ without $\QFAC^{0,1}$, as well as the construction of a type structure of $\SIXK+\QFAC^{0,1}$ in which $\neg\HBU$ holds. 
Thus, $\QFAC^{0,1}$ is not essential for obtaining $\HBU$.      
We finish this section with a remark. 
\begin{rem}[The Axiom of Choice and $\Theta$]\label{ThetaK}\rm
First of all, the (quantifier-free) Axiom of Choice is used to establish the existence of $\Theta$ in Theorem \ref{nolapdog}, while by \cite{samFLO}*{Cor.~3.29}, $\Theta$ can be \emph{computed} (via a term from G\"odel's $T$) from a version of $\exists^{3}$ enriched with quantifier-free choice.  However, Borel's construction from \cite{opborrelen}*{p.~52} can be applied to our notion of canonical cover, yielding a \emph{countable sub-cover} {without} using the Axiom of Choice.
Furthermore, the instance $\Theta_{0}$ of the special fan functional from \cite{dagsam}*{\S5.1} is defined using Borel's construction.  
\end{rem}

\subsection{Lindel\"of lemma and similar theorems}\label{korkske}
We establish results analogous to Theorem \ref{main1} for some of the other theorems from Remark \ref{bthm}.
We discuss how these theorems are used in mathematics in Remark~\ref{popo}.
\subsubsection{Lindel\"of lemma}
We recall that Lindel\"of proved the \emph{Lindel\"of lemma} in 1903 (\cite{blindeloef}), while Young and Riesz proved a similar theorem in 1902 and 1905 (\cite{manon, youngster}); this lemma expresses that any open cover of any subset of $\R^{n}$ has a countable sub-cover.    
We study variations of this lemma restricted to $\R$, while Baire space is studied in Section \ref{lindeb}.   
We believe $\LIND$ is the closest to Lindel\"of's original\footnote{Lindel\"of formulates his lemma in \cite{blindeloef}*{p.\ 698} as follows: \emph{Soit \textsf{\textup{(P)}} un ensemble quelconque situ\'e dans l'espace $\R^{n}$ et, de chaque point $\textsf{\textup{P}}$ comme centre, construisons une sph\`ere $\textsf{\textup{S}}_{\textsf{\textup{P}}}$ d'un rayon $\rho_{\textsf{\textup{P}}}$ qui peut varier de l'un point
\`a l'autre; il existe une infinit\'e d\'enombrable de ces sph\`eres de telle sorte que tout point de l'ensemble donn\'e
soit int\'erieur \`a au moins l'une d'elles}.  Applying $\QFAC^{0,1}$ to $\LIND$, one could obtain $\Phi^{0\di1}$ such that $(\forall n \in\N)[(a_{n}, b_{n}) = I_{\Phi(n)}^{\Psi} ]$, but such a functional is nowhere to be found in Lindel\"of's original formulation.}.  
\bdefi[$\LIND$] 
For every $\Psi:\R\di \R^{+}$, there is a sequence of open intervals $\cup_{n\in \N}(a_{n}, b_{n})$ covering $\R$ such that $(\forall n \in\N)(\exists x \in \R)[(a_{n}, b_{n}) = I_{x}^{\Psi} ]$.  
\edefi
\bdefi[$\LIND_{2}$] 
$(\forall \Psi:\R\di \R^{+})(\exists \Phi^{0\di 1})(\forall x\in \R)(\exists n^{0})(x\in I^{\Psi}_{\Phi(n)})$.
\edefi
\bdefi[$\LIND_{3}$] 
$(\exists \Xi)(\forall \Psi:\R\di \R^{+})(\forall x\in \R)(\exists n^{0})(x\in I^{\Psi}_{\Xi(\Psi)(n)})$.
\edefi
The following theorem establishes the connection between $\LIND$ and $\HBU$, while also showing that 
the introduction of $\Xi$ or $\Phi$ does not change $\LIND$ much.  
\begin{thm}\label{crucks}
The system $\RCAo+\QFAC^{0,1}$ proves $[\LIND+\WKL]\asa \HBU$ and $\ACAo+\QFAC^{0,1}$ proves $\LIND\asa \LIND_{2}\asa \LIND_{3}$.
\end{thm}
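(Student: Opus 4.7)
My plan for $[\LIND+\WKL]\asa\HBU$ over $\RCAo+\QFAC^{0,1}$ starts with the forward direction. Given $\Psi:\R\to\R^{+}$, I apply $\LIND$ to produce a sequence $(a_{n},b_{n})_{n\in\N}$ of intervals covering $\R$ with $(\forall n)(\exists x)[(a_{n},b_{n})=I^{\Psi}_{x}]$, and I use $\QFAC^{0,1}$ to extract witnesses $(x_{n})_{n\in\N}$. The family $(I^{\Psi}_{x_{n}})_{n\in\N}$ is a countable open cover of $[0,1]$, so the $\WKL_{0}$-provable countable Heine-Borel theorem from \cite{simpson2}*{IV.1.2} produces $k\in\N$ with $[0,1]\subseteq\bigcup_{n\leq k}I^{\Psi}_{x_{n}}$, giving the finite sub-cover demanded by $\HBU$.

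For the reverse implication $\HBU\to\LIND+\WKL$ I would split into two parts. To derive $\LIND$, I will cover $\R$ by the countably many intervals $[-N,N]$, apply $\HBU$ to an appropriately translated/rescaled copy of $\Psi$ on each, and use $\QFAC^{0,1}$ to assemble the finite sub-covers into a single countable sub-cover of $\R$ in canonical form. To derive $\WKL$, given an infinite binary tree $T$ with no path (for contradiction), I will transfer $T$ to $[0,1]$ via the surjection $\xi(f):=\sum_{i}f(i)2^{-i-1}$ and set $\Psi(x):=2^{-k_{x}-1}$, where $k_{x}$ is the least level at which the canonical binary expansion of $x$ leaves $T$; applying $\HBU$ to this $\Psi$ yields a finite list $y_{1},\dots,y_{m}$ with uniformly bounded levels $k_{y_{i}}$, and extending any $T$-node of strictly larger depth to an infinite sequence produces a point of $[0,1]$ not covered by any $I^{\Psi}_{y_{i}}$, a contradiction.

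For $\LIND\asa\LIND_{2}\asa\LIND_{3}$ over $\ACAo+\QFAC^{0,1}$, three of the four implications are routine: $\LIND_{3}\to\LIND_{2}$ by instantiation, $\LIND_{2}\to\LIND$ by taking the sequence $(I^{\Psi}_{\Phi(n)})_{n}$, and $\LIND\to\LIND_{2}$ by applying $\QFAC^{0,1}$ to extract $\Phi(n):=x_{n}$ from the existential content of $\LIND$. The main obstacle is the uniformisation $\LIND_{2}\to\LIND_{3}$, since naive extraction of a functional $\Xi:(2)\to 1$ from $(\forall\Psi)(\exists\Phi)$ would require choice at type $(2,1)$, unavailable in our base theory. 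My plan is to inspect the proof of $\LIND_{2}$ and show it is uniform in $\Psi$ when carried out over $\ACAo+\QFAC^{0,1}$: enumerate the rational open intervals $(p_{n},q_{n})_{n}$, and for each $n$ use Feferman's $\mu^{2}$ to decide the arithmetical predicates in $\Psi$ arising from the statement $(\exists x)[(p_{n},q_{n})\subseteq I^{\Psi}_{x}]$, together with $\QFAC^{0,1}$ to produce a real $\Xi(\Psi)(n)$ realising this existence when it holds. Density of $\Q$ in $\R$ then ensures every $y\in\R$ is covered: the rational pair $(p,q)$ with $y\in(p,q)\subseteq I^{\Psi}_{y}$ appears at some index $n$ in the enumeration, and hence $y\in I^{\Psi}_{\Xi(\Psi)(n)}$, verifying $\LIND_{3}$.
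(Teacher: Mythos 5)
The most serious gap is your treatment of $\LIND_{2}\di\LIND_{3}$. The ``uniform'' argument you sketch does not actually use $\LIND_{2}$ as a hypothesis: it is a direct construction of $\Xi$ from $\mu^{2}$ and $\QFAC^{0,1}$, and if it worked it would prove $\LIND_{3}$ (hence $\LIND$, hence $\HBU$ via the first part, since $\ACAo$ proves $\WKL$) outright in $\ACAo+\QFAC^{0,1}$, contradicting Theorems \ref{main1} and \ref{main2}. The step that fails is the guard $(\exists x\in \R)[(p_{n},q_{n})\subseteq I_{x}^{\Psi}]$: only its \emph{matrix} is arithmetical; the statement itself is an existential quantifier over type-one objects with the type-two parameter $\Psi$, which $\mu^{2}$ cannot decide (deciding such formulas for arbitrary type-two $\Psi$ is essentially $\exists^{3}$, not $S_{k}^{2}$). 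Moreover $\QFAC^{0,1}$ is inapplicable, since its hypothesis $(\forall n)(\exists x)A(n,x)$ fails for those $n$ admitting no witness, and any guarded matrix of the form ``if some witness exists then $x$ is one'' is no longer quantifier-free. The paper's route is different and avoids this entirely: it first gets $\HBU$ from $\LIND+\WKL$, then uses the \emph{realiser} $\Omega$ of \eqref{1337} and glues the finite sub-covers of the intervals $[-N,N]$ together uniformly in $\Psi$ and $N$.

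Two further steps need repair. First, your derivation of $\WKL$ from $\HBU$ via the binary-expansion map $\xi$ breaks down because $\xi$ does not separate branches: two canonical expansions first differing at position $1$, such as $010^{\infty}$ and $001^{m}0^{\infty}$, code reals at distance $2^{-(m+2)}$, so the point obtained from a deep node of $T$ may well lie inside some $I^{\Psi}_{y_{i}}$ of radius $2^{-k_{y_{i}}-1}$; one needs a separating embedding such as the middle-thirds map $\zeta$ of Theorem \ref{nolapdog} (and one must also check that your $\Psi$ respects $=_{\R}$, since the canonical binary expansion is not computable from Cauchy-sequence representations in $\RCAo$). Second, your applications of $\QFAC^{0,1}$ in the first part are not licensed over $\RCAo$: the matrix $(a_{n},b_{n})=I_{x}^{\Psi}$ is $\Pi_{1}^{0}$, and the matrix ``$w$ is a finite sub-cover of $[-N,N]$'' contains a universal quantifier over reals. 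The paper handles $\HBU\di\LIND$ by the case distinction $(\mu^{2})\vee\neg(\mu^{2})$: given $\mu^{2}$ the covering condition for finitely many intervals becomes decidable from the endpoints, while under $\neg(\mu^{2})$ every $\Psi:\R\di\R^{+}$ is continuous and $\cup_{q\in\Q}I_{q}^{\Psi}$ is already the required countable sub-cover with no choice needed. Your forward direction and $\LIND\di\LIND_{2}$ are otherwise as in the paper.
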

\begin{proof}
For the first part, $\WKL_{0}$ implies that every \emph{countable} cover of $I$ has a finite sub-cover by \cite{simpson2}*{IV.1.2}.  Hence, $\LIND+\WKL_{0}\di \HBU\di \WKL_{0}$ is immediate, while $\HBU$ clearly generalises to $[-N, N]$ for any natural number $N^{0}$.  Putting all the finite sub-covers of $[-N, N]$ together (using $\mu^{2}$ and $\QFAC^{0,1}$), one obtains the countable cover needed for $\LIND$, \emph{assuming} $(\mu^{2})$.
On the other hand, if $\neg(\mu^{2})$ then all functions on the reals are continuous by \cite{kohlenbach2}*{Prop.\ 3.9 and 3.12}.  But $\cup_{q\in \Q}I_{q}^{\Psi}$ is a countable sub-cover of the canonical sub-cover for \emph{continuous} $\Psi:\R\di \R^{+}$, and hence $\LIND$ follows.  The law of excluded middle $(\mu^{2})\vee \neg (\mu^{2})$ now finishes this part.    

\smallskip

For the second part, we only need to prove the forward implications.  So assume $\LIND$ and note that the formula `$(a_{n}, b_{n}) = I_{x}^{\Psi} $' is just $a_{n}=_{\R}x-\Psi(x)\wedge b_{n}=_{\R}x+\Psi(x)$, which is $\Pi_{1}^{0}$, i.e.\ this formula is decidable using $\mu^{2}$, and we can treat it as quantifier-free in $\ACAo$.  Now apply $\QFAC^{0,1}$ to $(\forall n \in\N)(\exists x \in \R)[(a_{n}, b_{n}) = I_{x}^{\Psi} ]$ to obtain $\LIND_{2}$.  For the final implication, we use the same argument as in the first part, establishing $\HBU$ relativised to $[-N,N]$, and now combined with the existence of the functional $\Omega$ as in \eqref{1337}.  
\end{proof}
The local equivalence of `epsilon-delta' and sequential continuity is not provable in \textsf{ZF}, while $\QFAC^{0,1}$ suffices to establish the equivalence in a general context (See \cite{kohlenbach2}*{Rem.\ 3.13} for details).  
It is then a natural question whether the use of $\QFAC^{0,1}$ in the theorem is similarly essential.  
This question is deviously subtle, as discussed in Remark \ref{linpinpon}.
We show in \cite{dagsamV}*{\S4} that $\ACAo$ in Theorems \ref{nolapdog} and~\ref{crucks} can be weakened to $\RCAo$ plus the existence of the \emph{classical} fan functional.  
%
%
\begin{thm}\label{main2}
 $\Z_{2}^{\Omega}+\QFAC^{0,1}$ proves $\LIND$; no system $\SIXK$ proves $\LIND$.  
\end{thm}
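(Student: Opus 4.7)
\emph{Positive direction} ($\Z_{2}^{\Omega}+\QFAC^{0,1}\vdash \LIND$). Following the sketch in the proof of Theorem \ref{main1a}, my plan is to interpolate the \emph{Neighbourhood Function Principle} $\NFP$ from Remark \ref{bthm}(v). I would first establish $\NFP$ in $\Z_{2}^{\Omega}+\QFAC^{0,1}$: given any formula $A(\alpha,n)$ with $(\forall \alpha^{1})(\exists n^{0})A(\alpha,n)$, the functional $\exists^{3}$ decides uniformly in $\sigma\in\N^{*}$ the second-order sentence ``there is an $n$ such that $A(\alpha,n)$ holds for every $\alpha\succ \sigma$''. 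Using this together with Feferman's $\mu$ (computable from $\exists^{3}$) to select least such $n$, and $\QFAC^{0,1}$ to package the choices into a single object, one assembles the desired neighbourhood function $\gamma$ witnessing $\NFP$. Second, I would derive $\LIND$ from $\NFP$: given $\Psi:\R\to \R^{+}$, to each $\alpha^{1}$ coding a real $x\in \R$ there is associated a natural number code $n$ of a rational interval $(a,b)$ together with a witness $y\in \R$ such that $x\in (a,b)=I_{y}^{\Psi}$; applying $\NFP$ yields a countable family of such coded intervals that still covers $\R$, and a final use of $\QFAC^{0,1}$ lifts each code to a genuine witness $y_{k}$ so that $(a_{k},b_{k})=I_{y_{k}}^{\Psi}$, which is exactly the form required by $\LIND$.

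\emph{Negative direction} (no $\SIXK$ proves $\LIND$). Here my plan is to reduce to the known negative fact for $\HBU$ via Theorem \ref{crucks}. Suppose for contradiction that $\SIXK$ proves $\LIND$ for some $k\geq 1$. Since $\SIXK\vdash \WKL$, Theorem \ref{crucks} (applied in a base theory also providing $\QFAC^{0,1}$) would yield $\HBU$. But by Theorem \ref{main1}, established via the type structure of Theorem \ref{realmain1}, no $\SIXK$ proves $\HBU$, a contradiction.

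\emph{Main obstacle.} The delicate point is the role of $\QFAC^{0,1}$ in the negative half, since Theorem \ref{crucks} presupposes this fragment of choice. Concretely, one has to verify that the type structure of Theorem \ref{realmain1} (or a suitable variant, as alluded to in the remark after Theorem \ref{main1} concerning a model of $\SIXK+\QFAC^{0,1}$ in which $\neg \HBU$ holds) validates $\QFAC^{0,1}$; since the structure built in Theorem \ref{6.3} is closed under Kleene's S1--S9 over the chosen ${\rm Tp}_{1}$, this should go through at the relevant types, but it is the step that needs the most care. The other technical burden sits in the positive direction, namely spelling out how the generic $n$-code delivered by $\NFP$ is converted, with the help of $\QFAC^{0,1}$, into an actual real witness $y$ of the exact shape $(a_{k},b_{k})=I_{y_{k}}^{\Psi}$ demanded by $\LIND$.
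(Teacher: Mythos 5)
Your route is the paper's: the positive half interpolates $\NFP$ (deciding the matrix with $\exists^{3}$, extracting an associate, and then using $\QFAC^{0,1}$ together with $\mu^{2}$ to select the witnessing reals), and the negative half reduces $\LIND$ to the non-provability of $\HBU$ via Theorems \ref{crucks} and \ref{main1}. One precision point on the positive side: you cannot ask for a rational interval with $(a,b)=I_{y}^{\Psi}$, since $I_{y}^{\Psi}$ has endpoints $y\pm\Psi(y)$, which need not be rational. The paper's formula \eqref{xxx} only asks that the rational interval around an approximation of $x$ be \emph{contained in} some $I_{y}^{\Psi}$; the countable cover delivered by $\LIND$ then consists of the selected intervals $I_{\Phi(q_{w})}^{\Psi}$ themselves, which are of the exact shape the statement demands. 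Your final sentence lands in the right place, but the intermediate ``$x\in(a,b)=I_{y}^{\Psi}$'' should be weakened to inclusion.

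The ``main obstacle'' you single out in the negative direction is, however, misplaced, and as written your argument leaves a genuinely hard claim unverified. You do not need the type structure of Theorems \ref{realmain1} and \ref{6.3} to validate $\QFAC^{0,1}$ (the paper only checks $\QFAC^{1,0}$ there, and whether $\QFAC^{0,1}$ holds in that structure is delicate: the quantifier-free matrix may contain type-two and type-three parameters of the model, so Gandy selection does not settle it out of the box; the paper defers a model of $\SIXK+\QFAC^{0,1}+\neg\HBU$ to later work). The resolution is simpler: only the direction $\HBU\di\LIND$ of Theorem \ref{crucks} uses $\QFAC^{0,1}$, whereas the direction you need, $[\LIND+\WKL]\di\HBU$, is choice-free -- $\LIND$ hands you a countable sub-cover of the canonical cover restricted to $[0,1]$, and $\WKL$ (available in every $\SIXK$) extracts a finite sub-cover by \cite{simpson2}*{IV.1.2}. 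Hence $\SIXK\vdash\LIND$ would give $\SIXK\vdash\HBU$ outright, contradicting Theorem \ref{main1} with no further model-theoretic verification. If you instead insist on citing Theorem \ref{crucks} as a black box over $\RCAo+\QFAC^{0,1}$, you are stuck with exactly the open task you describe, so the argument should be rearranged as above.
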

\begin{proof}
The second part is immediate from Theorems \ref{main1} and \ref{crucks}.  The first part is proved by proving item \eqref{NFP} from Remark \ref{bthm} in $\Z_{2}^{\Omega}$, and deriving $\LIND$ from this item.  
Thus, consider the following for any $\Pi_{\infty}^{1}$-formula $A$ with any parameter:
\be\tag{$\NFP$}
(\forall f^{1})(\exists n^{0})A(\overline{f}n)\di (\exists \gamma^{1}\in K_{0})(\forall f^{1})A(\overline{f}\gamma(f)).
\ee
Here, `$\gamma^{1}\in K_{0}$' expresses that $\gamma^{1}$ is an \emph{associate}, which is the same as a \emph{code} from RM by \cite{kohlenbach4}*{Prop.\ 4.4}.  Formally, `$\gamma^{1}\in K_{0}$' is the following formula:
\[
(\forall f^{1})(\exists n^{0})(\gamma(\overline{f}n)>_{0}0) \wedge (\forall n^{0}, m^{0},f^{1}, )(m>n \wedge \gamma(\overline{f}n)>0\di  \gamma(\overline{f}n)=_{0} \gamma(\overline{f}m) ).
\]
The value $\gamma(f)$ for $\gamma\in K_{0}$ is defined as the unique $\gamma(\overline{f}n)-1$ for $n$ large enough.  
Now, since $A$ as in $\NFP$ is a $\Pi_{k}^{1}$-formula for some $k$, we may treat it as quantifier-free given $(\exists^{3})$.  
Applying $\QFAC^{1,0}$ to the antecedent of $\NFP$, there is $Y^{2}$ such that $(\forall f^{1})A(\overline{f}Y(f))$.  Define $Z^{2}$ using $(\exists^{3})$ as follows: $Z(f)$ is the least $n\leq Y(f)$ such that $A(\overline{f}n)$ if it exists, and zero otherwise.  
Note that $Z$ is continuous on $\N^{\N}$ and hence has an associate by \cite{kohlenbach4}*{Prop.\ 4.7}.  Alternatively, define the associate $\gamma^{1}$ directly as follows:  for $w^{0^{*}}$, 
define $\gamma(w)$ as the least $n\leq |w|$ such that $A(\overline{w}n)$ if such there is, and zero otherwise.  
Clearly, we have $\gamma\in K_{0}$ and $(\forall f^{1})A(\overline{f}\gamma(f))$, i.e.\ $\NFP$ follows.  Finally, $\LIND$ follows from the latter by considering: 
\be\label{xxx}\textstyle
(\forall x\in \R)(\exists n\in \N)\big[(\exists y\in \R)(  ([x](\frac{1}{2^{n}})-\frac{1}{n}, [x](\frac{1}{2^{n}})+\frac{1}{n})\subset I_{y}^{\Psi}  )\big]
\ee
for $\Psi:\R\di \R^{+}$, and where the formula in square brackets is abbreviated $A(\overline{x}n)$.  This is a slight abuse of notation, as (only) the first $2^{n}$ elements in the sequence $x^{1}$ are being used in \eqref{xxx}.
Applying $\NFP$ to \eqref{xxx}, we obtain $\gamma\in K_{0}$ such that:  
\be\label{xxx2}\textstyle
(\forall x\in \R)(\exists y\in \R)\big[  ([x](\frac{1}{2^{\gamma(x)}})-\frac{1}{\gamma{(x)}}, [x](\frac{1}{2^{\gamma{(x)}}})+\frac{1}{\gamma{(x)}})\subset I_{y}^{\Psi}  \big].
\ee
Note that the formula in square brackets in \eqref{xxx2} is arithmetical (including the formula needed to make the notation $\gamma(x)$ work).
Hence, using $\QFAC^{0,1}$ and $(\mu^{2})$, there is a functional $\Phi$ which provides the real $y$ from \eqref{xxx2} on input $x\in \Q$.
The countable sub-cover of $\cup_{x\in \R}I_{x}^{\Psi}$ can then be found by enumerating $\Phi(q_{w})$ for
all finite sequences $w^{0^{*}}$ of {rationals} which represent rationals $q_{w}^{0}$ and are such that $\gamma(w)>_{0}0$.  In particular, every $x\in \R$ is in some $I_{y}^{\Psi}$ by \eqref{xxx2}, and since $v^{0^{*}}:=\overline{x}2^{\gamma(x)}$ is in the aforementioned enumeration, we also have $x\in I_{\Phi(q_{v})}^{\Psi}$.
\end{proof}
By the first part of Theorem \ref{crucks}, the results regarding $\LIND$ have to be somewhat similar to those for $\HBU$.  However, the Lindel\"of theorem \emph{for the Baire space} behaves quite differently, as will be established in Section \ref{RMR2}.  
Furthermore, while $\HBU$ implies $\WKL$, $\LIND$ does not by the following corollary.
\begin{cor}\label{dirfi}
The system $\RCAo+\LIND$ proves the same $\L_{2}$-sentences as $\RCA_{0}$.  
\end{cor}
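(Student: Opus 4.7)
The plan is to use Kohlenbach's \emph{extensional continuous functional} interpretation $\ECF$, which translates $\L_{\omega}$-formulas to $\L_{2}$-formulas by replacing higher-type objects with their associates (see \cite{kohlenbach2}*{\S2}). Two features of $\ECF$ are crucial for the argument: first, $\ECF$ acts as the identity (modulo provable equivalence) on $\L_{2}$-sentences; second, $\RCAo \vdash \varphi$ implies $\RCA_{0} \vdash \ECF(\varphi)$.

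Given these, the task reduces to verifying $\RCA_{0} \vdash \ECF(\LIND)$. Under $\ECF$, every $\Psi:\R \to \R^{+}$ is represented by an associate and hence corresponds to a continuous coded function in the sense of classical RM. The plan is then to show in $\RCA_{0}$ that, for any such continuous $\Psi$, the countable family $\{I_{q}^{\Psi}: q \in \Q\}$ already covers $\R$. Given $x \in \R$, continuity of $\Psi$ at $x$ provides $\delta > 0$ with $\Psi(y) > \Psi(x)/2$ for $|y-x| < \delta$; choosing a rational $q$ with $|q-x| < \min\{\delta, \Psi(x)/2\}$ then yields $|q-x| < \Psi(q)$, so $x \in I_{q}^{\Psi}$. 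This is essentially the argument already used in the proof of Theorem \ref{crucks} to derive $\LIND$ from $\neg(\mu^{2})$, and it is fully arithmetical in the code of $\Psi$, hence formalisable in $\RCA_{0}$.

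To conclude: if $\sigma \in \L_{2}$ and $\RCAo + \LIND \vdash \sigma$, then the properties of $\ECF$ above together yield $\RCA_{0} \vdash \ECF(\sigma) \leftrightarrow \sigma$ and $\RCA_{0} \vdash \ECF(\LIND)$, whence $\RCA_{0} \vdash \sigma$. The main obstacle I anticipate is the careful bookkeeping in translating $\LIND$ under $\ECF$: one must check that the existential witness $x \in \R$ appearing in the clause $(\exists x \in \R)[(a_{n}, b_{n}) = I_{x}^{\Psi}]$ of $\LIND$ survives the translation and can in fact be taken to be the rational $q$ produced above, so that the resulting sub-cover genuinely has the form required by $\LIND$.
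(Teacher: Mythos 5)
Your proposal is correct and follows essentially the same route as the paper: apply the $\ECF$ translation (identity on $\L_{2}$-sentences, provability-preserving from $\RCAo$ to $\RCA_{0}$) and then verify that $[\LIND]_{\ECF}$ holds in $\RCA_{0}$ because a coded/continuous $\Psi$ admits an explicitly definable countable sub-cover. The only cosmetic difference is that you extract the sub-cover directly as $\{I_{q}^{\Psi}:q\in\Q\}$ (the argument from Theorem \ref{crucks}), whereas the paper enumerates over finite rational sequences $w$ with $\gamma(w)>0$ as in the proof of Theorem \ref{main2}; both are valid and your bookkeeping remark about the witness $x=q$ is exactly the point that needs checking.
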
  
\begin{proof}
By the proof of \cite{kohlenbach2}*{Prop.\ 3.1}, if for a sentence $A\in \L_{\omega}$, the system $\RCAo$ proves $A$, then $\RCA_{0}$ proves $[A]_{\ECF}$, 
where `$[~\cdot~]_{\ECF}$' is a syntactic translation which -intuitively- replaces any object of type $2$ or higher by a code $\gamma^{1}\in K_{0}$.  
Thus, to establish the corollary, it suffices to show that $[\LIND]_{\ECF}$ is provable in $\RCA_{0}$.  
However, $\LIND$ only involves objects of type $0$ and $1$, except for the leading quantifier.  Hence, $[\LIND]_{\ECF}$ is nothing more than $\LIND$ with `$(\forall \Psi^{1\di 1})$' replaced by `$(\forall \gamma^{1}\in K_{0})$'.
Thus, by enumerating $\gamma(w)$ as in the proof of the theorem, we immediately obtain a countable sub-cover, and $[\LIND]_{\ECF}$ is provable in $\RCA_{0}$.    
\end{proof}
Finally, we discuss the Lindel\"of lemma in the grand scheme of things, and associated results to be proved in \cite{dagsamV}.  
\begin{rem}\label{linpinpon} \rm 
The following are equivalent over $\textsf{ZF}$ by \cite{heerlijk}: (i) $\R$ is a Lindel\"of space, and (ii) the axiom of countable choice (for subsets of $\R$).  
This resonates with the use of $\QFAC^{0,1}$ in Theorem \ref{main2}, but is not the entire story: we introduce a weak and a strong version of $\LIND$ (and $\HBU$) in \cite{dagsamV} based on the 1895 and 1899 proofs of the Heine-Borel theorem by Borel (\cite{opborrelen}) and Schoenflies (\cite{schoen2}).  The \emph{weak} version of $\LIND$ (and $\HBU$) is provable in $\Z_{2}^{\Omega}$ (and hence in $\ZF$).  This is possible as the weak version only provides the \emph{existence} of a countable sub-cover as in $\LIND$, while the strong version \emph{additionally} identifies a sequence of reals which yield the countable sub-cover, as in $\LIND_{2}$ via $\Phi^{0\di 1}$. 
\end{rem}

\subsubsection{Other theorems}\label{other}
We discuss how the theorems in Remark \ref{bthm} imply either $\LIND$ or $\HBU$, and hence have similar properties to the latter.    
\begin{enumerate}
\item The \emph{Besicovitsch and Vitali\footnote{Not to be confused with the Vitali covering \emph{theorem} (\cite{bartle}*{p.\ 79}), which does follow from the Vitali covering \emph{lemma} via Banach's proof from \cite{naatjenaaien}*{p.\ 81}; we believe that the Vitali covering theorem is weaker than $\HBU$, but nonetheless requires full second-order arithmetic $\Z_{2}^{\Omega}$ to prove. } covering lemmas} as in \cite{auke}*{\S2} start from a cover of open balls, one for each $x\in E\subset \R^{n}$, and states the existence 
of a countable sub-cover of $E$ with nice properties, i.e.\ $\LIND$ follows.  Note that Vitali already (explicitly) discussed uncountable covers in \cite{vitaliorg}*{p.\ 236}.  
\item The existence of \emph{Lebesgue numbers} for {any} open cover is equivalent to $\HBU$, in the same way the countable case is equivalent to $\WKL_{0}$ (\cite{moregusto}*{Theorem~5.5}). 
The same holds for the \emph{Banach-Alaoglu theorem}; the equivalence between the countable case and $\WKL_{0}$ is established in \cite{xbrownphd}*{p.\ 140}.
\item The principle $\NFP$ implies $\LIND$ by the proof of Theorem \ref{main2}.  
\item The \emph{Heine-Young} and \emph{Lusin-Young} theorems from \cite{YY} are clearly refinements of $\HBU$, while the  \emph{tile theorem} \cite{YY, wildehilde}, and the latter's generalisation due to Rademacher (\cite{rademachen}*{p.\ 190}) are clearly refinements of $\LIND$. 
\item Basic properties of the \emph{gauge integral}, like uniqueness and its extension of the Riemann integral, are equivalent to $\HBU$
over the system $\ACAo$, as shown in Section \ref{introgau}.  Note that $\ACAo$ is very weak compared to $\Z_{2}^{\Omega}$, which is in turn required to prove $\HBU$ by Theorem \ref{main1}.   
\item In Section \ref{klipel}, we derive $\HBU$ from the following generalisation of the Bolzano-Weierstrass theorem: \emph{every net in the unit interval has a convergent sub-net}. 
Nets (aka Moore-Smith sequences) provide a generalisation of the concept of sequences beyond countable index sets.     
\end{enumerate}
%
%
Finally, we discuss how some of the `countable covering theorems', like the Lindel\"of and Vitali lemmas, from Remark~\ref{bthm} are used in mathematics.  
\begin{rem}\label{popo}\rm
The Cousin lemma is special because it deals with bounded sets (essentially the unit interval), while the other covering theorems apply to unbounded sets (e.g.\ $\R^{n}$).  
Now, a cover of the latter is generally difficult to handle, but any \emph{countable} sub-cover `automatically' has nice properties: e.g.\ the \emph{countable} sub-additivity of the Lebesgue measure. 
In fact, the proofs of \emph{Sard's theorem} and the \emph{maximal theorem} in \cite{auke}, and of the \emph{Lebesgue density theorem} in \cite{grotesier} are based on this idea.  
In other words, the non-local character of some of the covering theorems in Remark \ref{bthm} is important for some real applications of these theorems.  

\smallskip

Similarly, for properties which hold in the unit interval \emph{minus a measure zero set}, like the \emph{differentiation theorem} for gauge integrals (\cite{bartle}*{p.\ 80}), one uses the Vitali covering theorem to provide a countable sub-cover in which the complement of a finite sub-sub-cover has small length.  Hence, one can neglect this complement and the finite nature of the sub-sub-cover then makes the proof straightforward.  
\end{rem}
\subsection{The gauge integral}\label{introgau}
\subsubsection{Introduction}
We provide a brief introduction to the \emph{gauge integral} (Section~\ref{bintro}) and establish that basic properties of this integral, like uniqueness and the fact it extends the Riemann and Lebesgue integral, are equivalent to $\HBU$ (Section \ref{grm}) over the (relatively weak) system $\ACAo$.  
The gauge integral is not an isolated incident: the \emph{Henstock variational measure} is a generalisation of the Lebesgue outer measure (see \cite{leekessjorsj}*{Ch.\ 5}), and its basic properties are equivalent to $\HBU$ as well, as will be shown in future work. 
We address a possible criticism that may be levelled at our results in Section \ref{extradata}.  For the rest of this section, we motivate the RM-study of the gauge integral.    

\smallskip

As will become clear below, the gauge integral enjoys the conceptual simplicity of the Riemann integral, but also has greater generality than the Lebesgue integral.  
In fact, the gauge integral boasts the most general version of the fundamental theorem of calculus (see Theorem \ref{FTC}) and is `maximally' closed under improper integrals (Hake's theorem; see Theorem \ref{coromag} for a special case).  
For these reasons, there have been calls for (a somewhat stripped-down version of) the gauge integral to replace the Riemann and Lebesgue integral (and the associated measure theory) in the undergraduate curriculum (\cite{bartleol, bartleol2,bartleol3}).  In a nutshell, the gauge integral can only be called \emph{natural and mainstream} from the point of view of mathematics.  

\smallskip

Regarding the connection between physics and the gauge integral, Muldowney has expressed the following opinion in a private communication.  
\begin{quote}
There are a number of different approaches to the formalisation of Feynman’s path integral.  However, 
if one requires the formalisation to be close to Feynman’s original formulation, then the gauge integral
is really the only approach.
\end{quote}
Arguments for this opinion, including major contributions to Rota's program for the Feyman integral, may be found in \cite{mully}*{\S A.2}.
Another argument in favour of the gauge integral is that this formalism gives rise to so-called {physical} solutions, i.e.\ in line with the observations from physics  (see \cite{pouly,nopouly,nopouly2,nopouly3}). 
For instance, most path integral formalisms somehow require the concept of \emph{imaginary} time, while the gauge integral provides a more natural framework \emph{based on real time}. 
A major problem with imaginary time is namely the lack of an arrow/direction of time (\cite{goodtime}), where the latter is proscribed by thermodynamics (see also \cite{geiledel}).

\smallskip

Finally, another argument for the RM study of the gauge integral is as follows: the study of the equivalent\footnote{The gauge integral is equivalent to the (much older) Denjoy integral; see \cites{gordonkordon, bartle} for details.} Denjoy integral in descriptive set theory in \cite{walshout}*{\S2} makes essential use of fundamental results from \cite{zwette, gordonkordon}, like Hake's theorem for the gauge integral (see Theorem \ref{coromag}).  We note that the treatment in \cite{walshout} assumes the measurability of the integrand in the fundamental theorem of calculus, a condition that flies in the face of the generality the gauge integral enjoys (see Theorem \ref{FTC}).  

\subsubsection{Introducing the gauge integral}\label{bintro}
The gauge integral is a generalisation of the Lebesgue and (improper) Riemann integral; it was introduced by Denjoy (in a different from) around 1912 and studied by Lusin, Perron, Henstock, and Kurzweil.  
The exact definition is in Definition \ref{GI}, which we intuitively motivate as follows.

\smallskip

A limitation of the `$\eps$-$\delta$-definition' of the Riemann integral is that near a singularity of a function $f:[0,1]\di \R$, changes smaller than any fixed $\delta>0$ in $x$ can still result in huge changes in $f(x)$, guaranteeing that the associated Riemann sums vary (much) more than the given $\eps>0$.  The gauge integral solves this problem by replacing the \emph{fixed} $\delta>0$ with a \emph{gauge function} $\delta:\R\di \R^{+}$;  the latter can single out those partitions with `many' partition points near singularities to compensate for the extreme behaviour there.  
Similarly, $\delta:\R\di \R^{+}$ can single out partitions which avoid `small' sets whose contribution to the Riemann sums should be negligible. 
 We study $\frac{1}{\sqrt{x}}$ and \emph{Dirichlet's function} in Example \ref{fore} after the following definition.
\bdefi\label{GI}[Gauge integral]\label{GID}~   
\begin{enumerate}
\renewcommand{\theenumi}{\roman{enumi}}
\item A gauge on $I\equiv[0,1]$ is any function $\delta:\R\di \R^{+}$.  
\item A sequence $P:=(t_{0}, I_{0}, \dots, t_{k}, I_{k})$ is a \emph{tagged partition} of $I$, written `$P\in \textsf{tp}$', if the `tag' $t_{i}\in \R$ is in the interval $ I_{i}$ for $i\leq k$, and the $I_{i}$ partition $I$.
\item If $\delta$ is a gauge on $I$ and $P=(t_{i}, I_{i})_{i\leq k}$ is a tagged partition of $I$, then $P$ is \emph{$\delta$-fine} if $I_{i}\subseteq [t_{i}-\delta(t_{i}), t_{i}+\delta(t_{i})]	 $ for $i\leq k$.\label{hurfo}
\item For a tagged partition $P=(t_{i}, I_{i})_{i\leq k}$ of $I$ and any $f$, the \emph{Riemann sum} $S(f, P)$ is $\sum_{i=0}^{n}f(t_{i})|I_{i}|$, while the \emph{mesh} $\|P\|$ is $\max_{i\leq n}|I_{i}|$.
\item A function $f:I\di \R$ is \emph{Riemann integrable} on $I$ if there is $A\in \R$ such that $(\forall \eps>_{\R}0)(\exists\delta>_{\R}0)(\forall P\in \textsf{tp})(\|P\|\leq_{\R} \delta\di |S(f, P)-A|<_{\R}\eps)$.\label{lakel}
\item A function $f:I\di \R$ is \emph{gauge integrable} on $I$ if there is $A\in \R$ such that
$(\forall \eps>_{\R}0)(\exists\delta:\R\di \R^{+})(\forall P\in \textsf{tp})(\textup{$P$ is $\delta$-fine }\di |S(f, P)-A|<_{\R}\eps)$.\label{lakel2}
\item A \emph{gauge modulus} for $f$ is a function $\Phi:\R\di (\R\di \R^{+})$ such that $\Phi(\eps)$ is a gauge as in the previous item for all $\eps>_{\R}0$.  \label{foralleke}
\end{enumerate}
\edefi
The real $A$ from items \eqref{lakel} and \eqref{lakel2} in Definition \ref{GID} is resp.\ called the Riemann and gauge integral. 
We will always interpret $\int_{a}^{b}f $ as a gauge integral, unless explicitly stated otherwise.  We abbreviate `Riemann integration' to `R-integration', and the same for related notions.  
The following examples are well-known. 
\begin{exa}[Two examples]\label{fore}\rm
Let $f$ be the function ${1}/{\sqrt{x}}$ for $x>0$, and zero otherwise.  It is easy to show $\int_{0}^{1}f =_{\R}2$ using the gauge modulus $\delta_{\eps}(x):=\eps x^{2}$ for $x>0$ and $\eps^{2}$ otherwise.  
Let $g$ be constant $1$ for $x\in \Q$, and zero otherwise.  It is easy to show $\int_{0}^{1}g =_{\R}0$ using the gauge modulus $\delta_{\eps}(x):= 1$ if $x\not \in \Q$ and $\eps/2^{k+1}$ if $x$ equals the $k$-th rational (for some enumeration of the rationals fixed in advance).  
\end{exa}
Now, using the Axiom of Choice, a gauge integrable function always has a gauge modulus, but this is not the case in weak systems like $\RCAo$.
However, to establish the \emph{Cauchy criterion} for gauge integrals as in Theorem \ref{cauct}, a gauge modulus is essential.  
For this reason, we sometimes assume a gauge modulus when studying the RM of the gauge integral in Section \ref{grm}.  
Similar `constructive enrichments' or `extra data' exist in Friedman-Simpson RM, as established by Kohlenbach in \cite{kohlenbach4}*{\S4}.
Finally, the `standard' proof of the fundamental theorem of calculus for the gauge integral (see \cites{bartle,mullingitover,zwette, leekessjorsj} and Theorem \ref{FTC}) readily establishes 
the existence of a gauge modulus in terms of other\footnote{The proofs in \cites{bartle,mullingitover,zwette,leekessjorsj} establish that a gauge modulus for the integral in the fundamental theorem of calculus $\int_{a}^{b}F' = F(b)-F(a)$ is simply any modulus of differentiability of $F$.} extra data.   

\subsubsection{Reverse Mathematics of the gauge integral}\label{grm}
We show that basic properties of the gauge integral are equivalent to $\HBU$.  
We have based this development on Bartle's introductory monograph \cite{bartle}. 

\smallskip

First of all, we show that $\HBU$ is equivalent to the uniqueness of the gauge integral, and to the fact that the latter extends the R-integral.
Note that the names of the two items in the theorem are from \cite{bartle}*{p.\ 13-14}.  
\begin{thm}\label{firstje} 
Over $\ACA_{0}^{\omega}$, the following are equivalent to $\HBU$:
\begin{enumerate}
\renewcommand{\theenumi}{\roman{enumi}}
\item Uniqueness: If a function is gauge integrable on $[0,1]$, then the gauge integral is unique. \label{itemone}
\item Consistency: If a function is R-integrable on $[0,1]$, then it is gauge integrable there, and the two
integrals are equal.\label{itemtwo}
\end{enumerate}
\end{thm}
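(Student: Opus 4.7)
The plan rests on an auxiliary equivalence, provable in $\ACAo$, between $\HBU$ and the \emph{partition form}: for every gauge $\Psi:\R\to\R^{+}$ on $[0,1]$, there is a $\Psi$-fine tagged partition of $[0,1]$. From a $\Psi$-fine tagged partition $(t_i,I_i)_{i\leq k}$ the list $\langle t_0,\dots,t_k\rangle$ witnesses $\HBU$ for (a minor rescaling of) $\Psi$, since each $I_i\subseteq[t_i-\Psi(t_i),t_i+\Psi(t_i)]$ and $\bigcup_i I_i=[0,1]$. Conversely, given a finite sub-cover $\{I^{\Psi}_{y_j}\}_{j\leq k}$ of $[0,1]$ supplied by $\HBU$, one sorts the centers, picks rational cut-points inside the pairwise overlaps (using $\mu^2$ to decide the requisite comparisons), and assembles a $\Psi$-fine tagged partition arithmetically.

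With this partition form in hand, the forward direction $\HBU \Rightarrow$ (i),(ii) follows the standard Bartle-style arguments. For uniqueness, if $f$ has two gauge integral values $A,B$ with $\eps$-witnessing gauges $\delta_A,\delta_B$, the pointwise minimum $\delta(x):=\min(\delta_A(x),\delta_B(x))$ is a gauge, and a $\delta$-fine tagged partition $P$ (given by the partition form) is simultaneously $\delta_A$- and $\delta_B$-fine, forcing $|A-B|<2\eps$ and hence $A=B$. For consistency, given $f$ R-integrable with value $A$ and $\eps>0$, the constant gauge $\delta \equiv \delta_0/2$ derived from the R-integrability modulus $\delta_0$ guarantees that $\delta$-fine partitions (which exist by the partition form) have mesh at most $\delta_0$, so their Riemann sums lie within $\eps$ of $A$.

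For the reverse direction the driving observation is that if some gauge $\Psi$ admits \emph{no} $\Psi$-fine tagged partition, then the implication ``$P$ is $\Psi$-fine $\to |S(f,P)-A|<\eps$'' is vacuously true for every $f$, every $A\in\R$, and every $\eps>0$; hence via the gauge $\Psi$ \emph{every} real $A$ certifies $f$ as gauge integrable with value $A$. Apply this to $f\equiv 0$, which is straightforwardly gauge integrable with value $0$ via the constant gauge $1/2$ and an explicit bisection partition. The vacuous argument then also produces $1$ as a gauge integral value of $f$, contradicting (i). For (ii), $f\equiv 0$ is R-integrable with value $0$; the consistency clause ``the two integrals are equal'', read in the standard way as ``no value other than the R-integral is a gauge integral of an R-integrable function'', excludes the spurious $1$ and is again contradicted. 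In either case every $\Psi$ must admit a $\Psi$-fine partition, so $\HBU$ follows by the preliminary equivalence.

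The principal technical obstacle is the preliminary equivalence inside $\ACAo$: one must carry out the sub-cover-to-partition construction arithmetically, without circular appeal to $\HBU$, and reconcile the open/closed ball mismatch between the definition of $\HBU$ (open intervals) and that of $\delta$-fineness (closed intervals) via a harmless constant rescaling of the gauge. A secondary subtlety is the reading of ``the two integrals are equal'' in (ii), which must rule out \emph{every} spurious gauge integral value of an R-integrable function; this is the standard interpretation and is what turns the vacuous-truth phenomenon into a genuine contradiction in the reverse direction for consistency.
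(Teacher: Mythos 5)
Your proposal is correct and follows essentially the same route as the paper: the paper likewise extracts a $\delta$-fine tagged partition from the finite sub-cover given by $\HBU$ (and, in the contrapositive direction, recovers a finite sub-cover from the tags of a fine partition), proves Uniqueness via the pointwise minimum of two gauges, obtains Consistency from constant gauges, and drives the reverse implication by the same vacuous-truth phenomenon for a gauge admitting no fine partition. The only cosmetic differences are that the paper chains $\HBU\to(\textup{i})\to(\textup{ii})\to\HBU$ --- so that the strong reading of ``the two integrals are equal'' in (ii) is supplied by Uniqueness rather than by your direct constant-gauge argument, which on its own only shows that the R-integral is \emph{a} gauge-integral value --- and it uses an arbitrary R-integrable $f$ (producing the spurious value $A+1$) rather than $f\equiv 0$ in the final step.
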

\begin{proof}
We prove $\HBU\di \eqref{itemone}\di \eqref{itemtwo}\di \HBU$, where only the first implication requires $(\mu^{2})$.
To prove that $\HBU$ implies \emph{Uniqueness}, we assume the former and first prove that for every $\delta:\R\di \R^{+}$ there \emph{exists} a $\delta$-fine tagged partition.
To this end, apply $\HBU$ to $\cup_{x\in I}(x-\delta(x), x+\delta(x))$ to obtain a finite sub-cover $w:=(y_{0}, \dots, y_{k})$, i.e.\ we have $I\subset \cup_{x\in w}(x-\delta(x), x+\delta(x))$.   The latter cover is readily converted into a tagged partition $P_{0}:=(z_{j}, I_{j})_{j\leq l}$ (with $l\leq k$ and $z_{j}\in w$ for $j\leq l$) by removing overlapping segments and omitting redundant intervals `from left to right'.  By definition, $z_{j}\in I_{j}\subset (z_{j}-\delta(z_{j}), z_{j}+\delta(z_{j}))$ for $j\leq l$, i.e.\ $P_{0}$ is $\delta$-fine.  

\smallskip
\noindent
Now let $f$ be gauge integrable on $I$ and suppose we have for $i=1,2$ ($A_{i}\in \R$) that:
\be\label{loker}
(\forall \eps>0)(\exists\delta_{i}^{1}:\R\di \R^{+})(\forall P\in \textsf{tp})(\textup{{$P$ is $\delta_{i}$-fine} }\di |S(f, P)-A_{i}|<\eps).
\ee
Fix $\eps>0$ and $\delta_{i}:\R\di \R^{+}$ in \eqref{loker} for $i=1,2$.  We define $\delta_{3}:\R\di \R^{+}$ as $\delta_{3}(x):=\min (\delta_{1}(x), \delta_{2}(x))$.  
By definition, a partition which is $\delta_{3}$-fine, is also $\delta_{i}$-finite for $i=1,2$.  
Now let the partition $P_{0}\in \textsf{tp}$ be $\delta_{3}$-fine, and derive the following:
\[
|A_{1}-A_{2}|=_{\R}|A_{1}-S(f, P_{0})+S(f, P_{0})-A_{2}|\leq_{\R}|A_{1}-S(f, P_{0})|+|S(f, P_{0})-A_{2}|\leq_{\R} 2\eps.
\]
Hence, we must have $A_{1}=_{\R}A_{2}$, and \emph{Uniqueness} follows. 

\smallskip

To prove that \emph{Uniqueness} implies \emph{Consistency}, note that `$P$ is $d_{\delta}$-fine' is equivalent to `$\|P\|\leq \delta$' for the gauge $d_{\delta}:\R\di \R^{+}$ which is constant $\delta>0$.
Rewriting the definition of Riemann integration with this equivalence, we observe that an R-integrable function $f$ is also gauge integrable (with a constant gauge $d_{\delta}$ for every choice of $\eps>0$).    
The assumption \emph{Uniqueness} then guarantees that the number $A$ is the only possible gauge integral for $f$ on $I$, i.e.\ the two integrals are equal.      

\smallskip

To prove that \emph{Consistency} implies $\HBU$, suppose the latter is false, i.e.\ there is $\Psi_{0}:\R\di \R^{+}$ such that $\cup_{x\in I}I_{x}^{\Psi_{0}}$ does not have a finite sub-cover. 
Now let $f:I\di \R$ be R-integrable with R-integral $A\in \R$.
Define the gauge $\delta_{0}$ as $\delta_{0}(x):={\Psi_{0}(x)}$ and note that for any $P\in \textsf{tp}$, we have that $P$ is \emph{not} $\delta_{0}$-fine, as $\cup_{x\in I}I_{x}^{\Psi_{0}}$ would otherwise have a finite sub-cover (provided by the tags of $P$).  Hence, the following statement is vacuously true, as the underlined part is false:
\be\label{corefu}
(\forall \eps>0)(\forall P\in \textsf{tp})(\underline{\textup{$P$ is $\delta_{0}$-fine }}\di |S(f, P)-(A+1)|<\eps).
\ee
However, \eqref{corefu} implies that $f$ is gauge integrable with gauge $\delta_{0}$ and gauge integral $A+1$, i.e.\ \emph{Consistency} is false as the Riemann and gauge integrals of $f$ differ.  Note that $\delta_{0}$ also provides a gauge \emph{modulus} by \eqref{corefu} in case $\neg\HBU$.  
\end{proof}
By the above, the role of $\HBU$ in making the gauge integral well-behaved, consists in avoiding \eqref{loker} and \eqref{corefu} being vacuously true due to the absence of $\delta_{i}$-fine partitions (for $i=0,1,2$).  
Thus, the Cousin lemma is called \emph{Fineness theorem} in \cite{bartle}.
As will become clear below, this is the \emph{sole} role of $\HBU$ in this context.  Nonetheless, $\HBU$ features in the RM of topology and uniform theorems in \cite{dagsamV, sahotop}, and Remark \ref{engauged} suggest an important role for the special fan functional, a realiser for $\HBU$, in gauge integration.

\smallskip

In passing, we discuss the question if $\ACAo$ in the previous (and subsequent) theorem can be weakened to $\RCAo$.  
In our opinion, this weakening would not be spectacular, given that $\HBU$ requires $\Z_{2}^{\Omega}$ for a proof, as established above. 
Furthermore, even very basic properties of the gauge integral require $\ACAo$, as follows.  
\begin{exa}[Splitting the domain]\label{splitskop}\rm
As it turns out,  
proving $\int_{0}^{1}f=_{\R}\int_{0}^{x}f+\int_{x}^{1}f$ for $0<_{\R}x<_{\R}1$ \emph{in general} seems to require a \emph{discontinuous} gauge.    
Indeed, if for $\eps>0$ the functions $\delta_{1}, \delta_{2}$ are gauges for the right-hand side of the equation, a gauge for the left-hand side is as follows (\cite{bartle}*{p.\ 45}):%
\be\label{hersing}
\delta_{3}(y):=
\begin{cases}
\min(\delta_{1}(y), \frac{1}{2}(x-y)) &  y\in [0, x)\\
\min(\delta_{1}(x), \delta_{2}(x)) &  y=_{\R}x\\
\min(\delta_{2}(y), \frac{1}{2}(y-x)) &  y\in (x, 1]\\
\end{cases}
\ee
The function $\delta_{3}$ is discontinuous in general, but can be defined in $\ACAo$. 
\end{exa}
Secondly, we prove the \emph{Cauchy criterion} for gauge integrals, as this theorem is needed below.  Our proof is based on \cite{bartle}*{p.\ 40} and illuminates the role of $\Theta$.
\begin{thm}[$\ACAo+\HBU+\QFAC^{2,1}$; Cauchy criterion]\label{cauct} A function $f:I\di \R$ is gauge integrable with a modulus if and only if there is $\Phi:\R^{+}\di (\R\di \R^{+})$ with 
\be\label{tilda}
(\forall \eps>_{\R}0)(\forall P, Q\in \textsf{\textup{tp}})(\textup{$P, Q$ are $\Phi(\eps)$-fine }\di |S(f, P)-S(f, Q)|<_{\R}\eps).
\ee
\end{thm}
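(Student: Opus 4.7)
The forward direction is the easy half: assume $f$ is gauge integrable with value $A$ and modulus $\Phi_{0}$. Define $\Phi(\eps) := \Phi_{0}(\eps/2)$. Then for any $P, Q\in \textsf{tp}$ that are both $\Phi(\eps)$-fine, we have $|S(f,P)-A|<\eps/2$ and $|S(f,Q)-A|<\eps/2$, so $|S(f,P)-S(f,Q)|<\eps$ by the triangle inequality. This uses only $\RCAo$.

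For the reverse direction, assume $\Phi$ satisfying \eqref{tilda} is given. Define a \emph{monotone} refinement $\Phi^{*}(\eps)(x):=\min_{k\leq N(\eps)}\Phi(1/2^{k})(x)$ where $N(\eps)$ is any integer with $1/2^{N(\eps)}<\eps$; note $\Phi^{*}$ exists in $\ACAo$ by arithmetical comprehension, and any $\Phi^{*}(\eps)$-fine partition is also $\Phi(1/2^{k})$-fine for each $k\leq N(\eps)$. Now, for each $n\in\N$, apply $\HBU$ (in the form of the realiser $\Omega$ from \eqref{1337}, which exists by Theorem~\ref{nolapdog} given $\HBU$ and $\mu^{2}\subset\ACAo$) to the canonical cover $\cup_{x\in I}I_{x}^{\Phi(1/2^{n})}$; as in the proof of Theorem~\ref{firstje}, convert the resulting finite sub-cover (arithmetically, using $\mu^{2}$) into a tagged partition $P_{n}$ which is $\Phi(1/2^{n})$-fine. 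This produces a sequence $(P_{n})_{n\in\N}$ inside $\ACAo$, and we set $A_{n}:=S(f,P_{n})$.

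Applying \eqref{tilda} with $\eps = 1/2^{\min(n,m)}$ to $P_{n}, P_{m}$ shows $|A_{n}-A_{m}|<1/2^{\min(n,m)}$, so $(A_{n})$ is a fast-converging Cauchy sequence of reals; its limit $A\in \R$ exists in $\RCAo$. To verify that $f$ is gauge integrable with value $A$ and modulus $\Phi^{*}$, fix $\eps>0$ and let $n:=N(\eps/3)$. For any $\Phi^{*}(\eps/3)$-fine partition $P$, the partition $P_{n}$ is $\Phi(1/2^{n})$-fine by construction, and $P$ is also $\Phi(1/2^{n})$-fine by monotonicity of $\Phi^{*}$; hence \eqref{tilda} gives $|S(f,P)-A_{n}|<1/2^{n}<\eps/3$, while $|A_{n}-A|\leq 1/2^{n-1}<2\eps/3$, so $|S(f,P)-A|<\eps$ as required.

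The main obstacle is the \emph{uniform-in-$n$} production of the sequence $(P_{n})$: a single application of $\HBU$ yields a finite sub-cover non-constructively, and one must be careful that the construction can be carried out as a sequence inside $\ACAo$. This is precisely why we invoke the realiser $\Omega$ rather than $\HBU$ in bare form, since $\Omega$ operates uniformly on the gauge inputs $\Phi(1/2^{n})$; the subsequent arithmetical reduction of a finite sub-cover to a tagged partition (sorting endpoints and deleting overlaps as in Theorem~\ref{firstje}) is explicit and hence uniformisable via $\mu^{2}$. This illustrates the broader point from Remark~\ref{engauged} that realisers of $\HBU$ are the natural computational tool for gauge integration.
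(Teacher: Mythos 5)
Your proof is correct and follows essentially the same route as the paper's: the forward direction is the standard triangle-inequality argument, and the reverse direction obtains $A$ as the limit of the Cauchy sequence $S(f,P_{n})$, where the $\Phi(1/2^{n})$-fine partitions $P_{n}$ are produced uniformly by applying the realiser $\Omega$ from \eqref{1337} together with Feferman's $\mu$. Your additional details (the monotone refinement $\Phi^{*}$ and the explicit verification that $A$ is the gauge integral with modulus $\Phi^{*}$) merely make explicit what the paper's proof leaves implicit.
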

\begin{proof}
The forward implication follows by considering a gauge modulus $\Phi$ for $f$ and 
\[
|S(f, P)-S(f, Q)|=|S(f, P)-A+A- S(f, Q)|\leq |S(f, P)-A|+|A- S(f, Q)| \leq \eps
\]
where $P, Q$ are $\Phi(\eps/2)$-fine and $A$ is the gauge integral of $f$ over $I$.  For the reverse implication let $\Phi$ be as in \eqref{tilda}; we need to find the real $A$ from the definition of gauge integration.  
This real $A$ can be obtained as the limit of the sequence $S(f, Q_{n})$ where $Q_{n}$ is a $\Phi(\frac{1}{2^{n}})$-fine partition.  
Now, these partitions $Q_{n}$ can in turn be defined by applying the functional $\Omega$ from Theorem \ref{nolapdog} to the canonical cover associated to $\Phi(\frac{1}{2^{n}})$
and using Feferman's $\mu$ to convert the resulting finite sub-cover to a suitable partition.  Finally, \eqref{tilda} guarantees that the sequence $S(f, Q_{n})$ is Cauchy, while $\ACA_{0}$ proves that a Cauchy sequence has a limit by \cite{simpson2}*{III.2.2}.  
\end{proof}
The previous proof explains the need for a gauge modulus: the latter is essential in `reconstructing' the gauge integral $A$ as the limit in the proof, if $A$ is not given.  

\smallskip

Thirdly, we show that $\HBU$ is equivalent to the fact that the gauge integral encompasses the \emph{improper} R-integral.  
The latter is a (usual) R-integral $\int_{a}^{b}f(x)d(x)$ where additionally a limit operation like $\lim_{a\di 0}$ or $\lim_{b\di \infty}$ is applied.  
This method allows one to consider unbounded domains or use singularities as end points; as suggested by its name, an \emph{improper} R-integral 
is (generally) not an actual R-integral.  Now, \emph{Hake's theorem} (\cite{bartle}*{p.\ 195}) implies that improper R-integrals (and the same for improper gauge integrals) are \emph{automatically} gauge integrals.  
Thus, Hake's theorem implies that the gauge integral is (maximally) closed under improper integrals.  

\smallskip

We consider \emph{special cases} of Hake's theorem, including item~\eqref{zwaken} below which does mention gauge integrability \emph{but does not mention gauge integrals or their uniqueness}. As a result, it is fair to say that the following 
equivalences are not (only) based on the uniqueness of the gauge integral.  Note that Hake's theorem in general (see \cite{bartle}*{\S12 and \S16}) comes with {no restrictions}.  
\begin{thm}\label{coromag} 
Over $\ACA_{0}^{\omega}+\QFAC^{2,1}$, the following are equivalent to $\HBU$:
\begin{enumerate}
\renewcommand{\theenumi}{\roman{enumi}}
\item There exists a function which is not gauge integrable with a modulus.\label{itemwon}
\item \(Hake\) If $f$ is gauge integrable on $I$ with a modulus and R-integrable on $[x,1]$ for $x>_{\R}0$, then the limit of R-integrals $\lim_{x\di 0+}\int_{x}^{1}f$ is $\int_{0}^{1}f$.\label{haken}
\item \(weak Hake\) If $f$ is gauge integrable with a modulus on $I$ and R-integrable on $[x,1]$ for $x>_{\R}0$, then the limit of R-integrals $\lim_{x\di 0+}\int_{x}^{1}f $ exists.\label{zwaken}
\end{enumerate}
\end{thm}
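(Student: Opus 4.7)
\medskip
\noindent
\textbf{Proof proposal.} The plan is to establish the cycle $\HBU \Rightarrow \eqref{haken} \Rightarrow \eqref{zwaken} \Rightarrow \eqref{itemwon} \Rightarrow \HBU$ within $\ACA_0^\omega$, which yields the three equivalences simultaneously.

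For the main direction $\HBU \Rightarrow \eqref{haken}$, I would fix a gauge modulus $\Phi$ for $f$ on $I$ and compare $A := \int_0^1 f$ with the R-integrals $A_x := \int_x^1 f$; by the \emph{Consistency} part of Theorem~\ref{firstje}, the latter coincide with the gauge integrals of $f$ on $[x,1]$. The crux is to apply $\HBU$ (via the realiser $\Omega$ from \eqref{1337}) to a suitably modified canonical cover of $I$ and extract a $\Phi(\varepsilon)$-fine tagged partition $P$ of $I$ whose first subinterval is $[0,x]$ with tag $0$; the conversion from finite subcover to tagged partition proceeds exactly as in the proof of Theorem~\ref{firstje}. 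The restriction $P'$ of $P$ to $[x,1]$ is then $\Phi(\varepsilon)|_{[x,1]}$-fine, so $|S(f,P) - A| < \varepsilon$ and $|S(f,P') - A_x| < \varepsilon$, while $S(f,P) - S(f,P') = f(0)\, x$. Combining these gives $|A - A_x| \leq 2\varepsilon + |f(0)|\, x$, which is arbitrarily small for $x$ small, proving the limit.

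The implication $\eqref{haken} \Rightarrow \eqref{zwaken}$ is immediate, since equality to a specific real entails existence of the limit. For $\eqref{zwaken} \Rightarrow \eqref{itemwon}$, I would produce an explicit witness: let $f(x) := 1/x$ for $x > 0$ and $f(0) := 0$. This $f$ is continuous, hence R-integrable, on each $[1/n,1]$ inside $\ACA_0^\omega$, and a direct lower-sum estimate yields $\int_{1/n}^1 f \geq \sum_{k=1}^{n-1} \frac{1}{k+1}$, which is unbounded in $n$. Hence $\lim_{x \to 0+} \int_x^1 f$ does not exist as a real, and the contrapositive of \eqref{zwaken} forces $f$ to fail gauge-integrability with a modulus, witnessing \eqref{itemwon}.

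Finally, for $\eqref{itemwon} \Rightarrow \HBU$, I would argue by contraposition as in the reverse direction of Theorem~\ref{firstje}: if $\neg\HBU$, fix $\Psi_0$ whose canonical cover of $I$ admits no finite subcover and set $\delta_0 := \Psi_0$. Then no tagged partition of $I$ is $\delta_0$-fine, so for any $f$ and any $A \in \R$ the gauge-integrability clause with constant modulus $\Phi(\varepsilon) := \delta_0$ is vacuously satisfied. Thus every $f$ is gauge integrable with a modulus, contradicting \eqref{itemwon}. The main obstacle in the whole argument is the Hake step: constructing a $\Phi(\varepsilon)$-fine partition with a prescribed leftmost piece $[0,x]$ tagged at $0$ is a delicate (though routine) variant of the partition construction in Theorem~\ref{firstje}, and it is the only place where the full strength of $\HBU$ is genuinely needed.
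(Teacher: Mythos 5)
Your cycle $\HBU\di\eqref{haken}\di\eqref{zwaken}\di\eqref{itemwon}\di\HBU$ is exactly the paper's, and three of the four legs coincide with the paper's arguments: the trivial implication $\eqref{haken}\di\eqref{zwaken}$, the witness $1/x$ with divergent $\int_{x}^{1}\frac{dt}{t}$ for $\eqref{zwaken}\di\eqref{itemwon}$, and the vacuous-truth argument (no $\delta_{0}$-fine partitions under $\neg\HBU$) for $\eqref{itemwon}\di\HBU$. The only real divergence is in $\HBU\di\eqref{haken}$, and that is where your argument has a gap.

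The gap is the step ``$P'$ is $\Phi(\eps)|_{[x,1]}$-fine, so $|S(f,P')-A_{x}|<\eps$''. The gauge $\Phi(\eps)$ certifies the $\eps$-error bound only for Riemann sums over \emph{full} tagged partitions of $[0,1]$; nothing in the definition of a gauge modulus says that its restriction to a subinterval $[x,1]$ certifies any error bound for Riemann sums over $[x,1]$ against $A_{x}=\int_{x}^{1}f$. Bridging exactly this gap is the content of the Saks--Henstock lemma, which is why the paper's proof of $\HBU\di\eqref{haken}$ routes through Saks--Henstock and the $\eps$-$\delta$-continuity of the indefinite integral $F(x)=\int_{x}^{1}f$. (By contrast, the step you single out as the ``main obstacle''---producing a fine partition whose leftmost piece is $[0,x]$ tagged at $0$---is the easy part: just prepend $([0,x],0)$ to a fine partition of $[x,1]$, using $x\leq\Phi(\eps)(0)$.) Your argument can be repaired without invoking the full Saks--Henstock lemma: since $f$ is by hypothesis R-integrable on $[x,1]$, there is a \emph{constant} $\delta_{x}>0$ such that every partition of $[x,1]$ of mesh $\leq\delta_{x}$ has Riemann sum within $\eps$ of $A_{x}$; apply $\HBU$ on $[x,1]$ to the gauge $\min(\Phi(\eps)(\cdot),\delta_{x}/2)$ to obtain $Q$ that is simultaneously $\Phi(\eps)$-fine and of mesh $\leq\delta_{x}$, and set $P:=([0,x],0)\cup Q$. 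Then $|S(f,P)-A|<\eps$, $|S(f,Q)-A_{x}|<\eps$, and $S(f,P)-S(f,Q)=f(0)x$, giving $|A-A_{x}|<2\eps+|f(0)|x$ as you intended. With that repair your route is correct and arguably more self-contained than the paper's appeal to Saks--Henstock; as written, however, the crucial inequality is asserted rather than proved.
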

\begin{proof}
We shall prove $\HBU\di \eqref{haken}\di \eqref{zwaken}\di \eqref{itemwon} \di \HBU$.  Note that the second implication is trivial.   Now assume item \eqref{zwaken} and consider the function $g:I\di \R$ which is $0$ if $x=_{\R}0$, and $\frac{1}{x}$ otherwise.  This function exists in $\ACA_{0}^{\omega}$ by \cite{kohlenbach2}*{Prop.~3.12}.  By the development of integration theory in \cite{simpson2}*{IV.2}, the R-integral $\int_{x}^{1}g$ exists for $x>0$ and is readily seen to equal $\ln(x)$, the natural logarithm.  However, the limit $x\di 0+$ of this function is $-\infty$.  Thus, the limit $\lim_{x\di 0+}\int_{x}^{1}g $ does not exist, and by the contraposition of weak Hake's theorem, we conclude that $g$ is not gauge integrable with a modulus on $I$, i.e.\ item \eqref{itemwon} follows.  

\smallskip      

The implication $ \eqref{itemwon}\di \HBU$ follows from the proof of Theorem \ref{firstje}:  in the last part of the latter proof, it is shown
that $\neg\HBU$ allows us to define a gauge $\delta_{0}$ for which there are no $\delta_{0}$-fine partitions.
Hence, the underlined part in \eqref{corefu} is false, making the formula trivially true for any $f$ and $A$, i.e.\ every function is gauge integrable (with a modulus).
Contraposition now yields the desired implication.  

\smallskip

Finally, we prove item \eqref{haken} in $\ACAo+\HBU+\QFAC^{2,1}$ based on the proof in \cite{bartle}*{p.\ 195}.  
In a nutshell, the latter uses the \emph{Saks-Henstock lemma} to prove that the indefinite integral $F(x):=\int_{x}^{1}f$ is ($\eps$-$\delta$-)continuous in $x$ on $I$.        
Hence $\lim_{x\di0+}F(x)=_{\R}F(0)$, which is exactly as required for item \eqref{haken}.  First of all, the Saks-Henstock lemma intuitively states that if one considers a sub-partition 
of a $\delta$-fine partition, one inherets all the `nice' properties of the original partition.  The proof of this lemma is a straight-forward `epsilon-delta' argument, with one subtlety: the \emph{Cauchy criterion} (as is Theorem \ref{cauct}) for gauge integrals requires a gauge modulus, which we (therefore) assumed in item \eqref{haken}.  
The proof that the Saks-Henstock lemma yields the continuity of $F(x):=\int_{x}^{1}f$ is also a straight-forward `epsilon-delta' argument.    
\end{proof}
The gauge integral is a proper extension of the Lebesgue and (improper) Riemann integral.  
As it turns out, this claim is of considerable logical strength, as follows.  
\begin{cor}[$\RCAo+\WKL+\QFAC^{2,1}$]\label{ofvaluesee}
The below combination yields $\ATR_{0}$:
\begin{enumerate}
\item[(i)] There exists a function that is not gauge integrable with a modulus.  
\item[(ii)] There exists a function that is not Riemann integrable, but gauge integrable. 
\end{enumerate}
\end{cor}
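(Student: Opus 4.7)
The plan is to reduce the corollary to the main result announced earlier in Section~\ref{basic} (item~(1) in the second list of main results), namely that the combination of the Cousin lemma $\HBU$ and Feferman's $\mu^{2}$ yields transfinite recursion for arithmetical formulas, i.e.\ (the higher-order analogue of) $\ATR_{0}$. Thus it suffices to extract $\HBU$ from hypothesis~(i) and $(\mu^{2})$ from hypothesis~(ii) over the base theory $\RCAo+\WKL+\QFAC^{2,1}$, and then invoke that combination.

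First, I would argue that (i) directly implies $\HBU$, by contraposition exactly as in the last paragraph of the proof of Theorem~\ref{firstje} (reused in the proof of Theorem~\ref{coromag}). If $\neg\HBU$ were witnessed by some $\Psi_{0}:\R\di \R^{+}$, then the gauge $\delta_{0}(x):=\Psi_{0}(x)$ would admit no $\delta_{0}$-fine tagged partition of $I$, making the defining implication of gauge integrability vacuously true for every $f:I\di\R$ and every candidate value $A\in\R$. In particular every function would be gauge integrable with $\delta_{0}$ itself serving as a constant gauge modulus, contradicting~(i). This step needs only $\RCAo$, so it is safely available in the base theory.

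Second, I would extract $(\mu^{2})$ from (ii). In $\WKL_{0}$, every continuous function on $[0,1]$ has a modulus of uniform continuity and is therefore Riemann integrable (\cite{simpson2}*{IV.2}), and this transfers to $\RCAo+\WKL$ via the $\ECF$-interpretation of \cite{kohlenbach2}. Consequently, any $f:\R\di\R$ which fails to be Riemann integrable on $[0,1]$ cannot be continuous there, witnessing a discontinuous function on $\R$. By Kohlenbach's continuity dichotomy \cite{kohlenbach2}*{Prop.\ 3.9 and 3.12}, the existence of a discontinuous $F:\R\di\R$ is equivalent to $(\mu^{2})$, with $\QFAC^{2,1}$ (which we have assumed) serving to bridge pointwise and sequential continuity in the usual way. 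Hence (ii) yields $(\mu^{2})$, and in particular $\ACAo$.

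Finally, with both $\HBU$ and $(\mu^{2})$ in hand, I would invoke the aforementioned main theorem to conclude $\ATR_{0}$ (using the $\Pi_{2}^{1}$-conservativity of $\ATR_{0}^{\omega}$ over $\ATR_{0}$ quoted in Remark~\ref{XXZ} to recover the purely second-order system). The main obstacle is precisely this last step: verifying that the promised derivation of arithmetical transfinite recursion from $\HBU+(\mu^{2})$ goes through with only $\QFAC^{2,1}$ as the ambient choice fragment, rather than any stronger form. The first two reductions are short and local; the substance is packaged into the $\HBU+(\mu^{2})\to\ATR_{0}$ implication, whose proof belongs to Section~\ref{RMR}.
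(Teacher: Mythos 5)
Your proposal is correct and follows essentially the same route as the paper: extract $\HBU$ from (i) via the vacuous-truth contraposition, extract $(\exists^{2})$/$(\mu^{2})$ from (ii) via Kohlenbach's continuity dichotomy plus $\WKL$, and then cite the $\HBU+(\mu^{2})\Rightarrow\ATR_{0}$ result (which the paper obtains from Theorem~\ref{nolapdog} and \cite{dagsam}*{Cor.~6.7}, so your worry about the choice fragment is resolved by $\QFAC^{2,1}$ being exactly what Theorem~\ref{nolapdog} needs). The only cosmetic difference is order: the paper derives $(\exists^{2})$ from (ii) first so that it can quote Theorem~\ref{coromag} (stated over $\ACA_{0}^{\omega}$) for the step (i)$\rightarrow\HBU$, whereas you observe, correctly, that this particular implication already goes through in the weaker base theory.
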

\begin{proof}
By \cite{kohlenbach2}*{Prop.\ 3.7}, $\neg(\exists^{2})$ implies that all $F:\R\di\R$ are continuous, and hence uniformly continuous on $[0,1]$ by $\WKL$ and \cite{kohlenbach4}*{Prop.\ 4.10}.
Hence, any gauge integrable function is also Riemann integrable, as the gauge has an upper bound on $[0,1]$.  By contraposition, item (ii) from the theorem implies $(\exists^{2})$.  
By Theorem \ref{coromag}, item (i) now yields $\HBU$.  The combination $\HBU+(\exists^{2})$ yields $\ATR_{0}$ by \cite{dagsam}*{Cor.\ 6.7} and Theorem \ref{nolapdog}. 
\end{proof}

Fourth, we show that $\HBU$ is equivalent to the fact that the gauge integral is a proper extension of the Lebesgue integral.   
In fact, $f:[0,1]\di \R$ is Lebesgue integrable if and only if $|f|, f$ are gauge integrable (\cite{bartle}*{\S7, p.\ 102}).      
We use the latter variant as introducing the Lebesgue integral is beyond the scope of this paper.  
\begin{thm} Over $\ACA_{0}^{\omega}$, $\HBU$ is equivalent to the following statement:
 There exists a function $\kappa:I\di \R$ which is gauge integrable with a modulus but $|\kappa|$ is not.
\end{thm}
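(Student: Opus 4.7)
The plan is to prove the reverse direction via a direct appeal to Theorem \ref{coromag}, and to prove the forward direction by exhibiting the classical example $\kappa(x) = F'(x)$ where $F(x) := x^{2}\sin(1/x^{2})$ for $x\neq 0$ and $F(0):=0$, which is the standard Henstock--Kurzweil witness separating conditional from absolute gauge integrability.

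For the reverse direction, suppose $\kappa:I\to\R$ is gauge integrable with a modulus while $|\kappa|$ is not. Then $|\kappa|$ is itself a function that is \emph{not} gauge integrable with a modulus, i.e.\ item \eqref{itemwon} of Theorem \ref{coromag} holds, so $\HBU$ follows at once. No additional work is required here.

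For the forward direction, I would work in $\ACA_0^{\omega}+\HBU$. First I would verify, using $(\mu^2)$ to handle the case split at $0$ and the formula $F'(x)=2x\sin(1/x^{2})-\tfrac{2}{x}\cos(1/x^{2})$ for $x\neq 0$ (with $F'(0)=0$ obtained from $|F(h)/h|\le |h|$), that $F$ is differentiable on $[0,1]$ with derivative $\kappa := F'$. Next I would invoke the Fundamental Theorem of Calculus for the gauge integral: given $\eps>0$, pointwise differentiability of $F$ supplies, for each $x$, some $\delta_\eps(x)>0$ with $|F(y)-F(x)-\kappa(x)(y-x)|\le \eps|y-x|$ whenever $|y-x|<\delta_\eps(x)$. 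The map $\eps\mapsto\delta_\eps$ is explicitly definable in $\ACAo$ from $F$ (using $\mu$ to search for rational witnesses), and forms a gauge modulus $\Phi$ for $\kappa$; the standard telescoping argument then shows that any $\Phi(\eps)$-fine partition yields a Riemann sum within $\eps$ of $F(1)-F(0)=\sin(1)$. Here $\HBU$ enters precisely to guarantee that $\Phi(\eps)$-fine partitions exist (as in the proof of Theorem \ref{firstje}), so that the integral is not vacuous.

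The main obstacle is showing that $|\kappa|$ is \emph{not} gauge integrable with a modulus. My plan is to set $y_n := 1/\sqrt{(n+\tfrac{1}{2})\pi}$, so that $F(y_n)=(-1)^{n}/((n+\tfrac{1}{2})\pi)$ and hence
\[
|F(y_n)-F(y_{n+1})| \;\ge\; \frac{C}{n}
\]
for a fixed constant $C>0$. Assume for contradiction that $|\kappa|$ is gauge integrable, say with value $M$. Since $\pm\kappa\le|\kappa|$, the monotonicity of the gauge integral (proved from the definition in $\ACAo$) together with the FTC applied on each $[y_{n+1},y_n]$ gives
\[
\int_{y_{n+1}}^{y_n} |\kappa| \;\ge\; \Bigl|\int_{y_{n+1}}^{y_n}\kappa\Bigr| \;=\; |F(y_n)-F(y_{n+1})| \;\ge\; \tfrac{C}{n}.
\]
Then, using finite additivity of the gauge integral (Example \ref{splitskop}, iterated $N$ times via $(\mu^2)$), I obtain $M\ge \int_{y_{N+1}}^{1}|\kappa|\ge \sum_{n=1}^{N} C/n$, which diverges as $N\to\infty$, a contradiction. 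The delicate point is that additivity of Example \ref{splitskop} is proved only for a single split, so I must iterate the construction of the discontinuous gauge \eqref{hersing} a finite number of times, which remains within $\ACA_0^{\omega}$; the use of $\HBU$ inside these finite sub-integrals is automatic since $\HBU$ relativises to any closed subinterval of $I$.
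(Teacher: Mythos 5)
Your reverse direction is exactly the paper's: $|\kappa|$ witnesses item \eqref{itemwon} of Theorem \ref{coromag}, hence $\HBU$. For the forward direction you take a genuinely different route. The paper uses a step function ($\kappa(x)=(-1)^{k+1}\frac{2^{k}}{k}$ on $[1-\frac{1}{2^{k-1}},1-\frac{1}{2^{k}})$), writes down an explicit ad hoc gauge modulus for $\kappa$ by piecing together gauges as in \eqref{hersing}, and disposes of $|\kappa|$ by observing that it is R-integrable on $[0,x]$ for $x<1$ with R-integrals growing like the harmonic series, so that weak Hake (item \eqref{zwaken} of Theorem \ref{coromag}) immediately rules out gauge integrability with a modulus. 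You instead use the classical witness $\kappa=F'$ with $F(x)=x^{2}\sin(1/x^{2})$, which buys a conceptually cleaner (and more textbook-recognisable) example but at the price of having to develop the Fundamental Theorem of Calculus for the gauge integral inside $\ACAo+\HBU$; your straddle/telescoping argument and the $\mu^{2}$-definition of the gauge modulus $\eps\mapsto\delta_{\eps}$ are fine, and $\HBU$ enters exactly where you say it does, as in Theorem \ref{firstje}.

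The one soft spot is your non-integrability argument for $|\kappa|$. You cite Example \ref{splitskop} for ``finite additivity,'' but that example only goes from integrability on the two pieces to integrability on the whole interval; to get $M\geq\int_{y_{N+1}}^{1}|\kappa|$ from the assumed $\int_{0}^{1}|\kappa|=M$ you also need that a function gauge integrable on $[0,1]$ restricts to a gauge integrable function on $[0,y_{N+1}]$, which is a separate fact requiring the Cauchy criterion of Theorem \ref{cauct} (available here since a modulus is assumed and $\HBU$ holds, but not something you have proved). The repair is immediate and is in fact the paper's strategy: $|\kappa|$ is continuous and bounded on $[x,1]$ for $x>0$, hence R-integrable there, and your estimate $|F(y_{n})-F(y_{n+1})|\geq C/n$ shows $\lim_{x\to 0+}\int_{x}^{1}|\kappa|$ diverges, so the contraposition of weak Hake yields the conclusion without any gauge-integral additivity at all.
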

\begin{proof}
The reverse implication is immediate by Theorem \ref{coromag}.
For the forward implication, define $a_{k}:=1-\frac{1}{2^{k}}$ and $\kappa(x):=(-1)^{k+1}\frac{2^{k}}{k}$ if $x\in [a_{k-1}, a_{k})$ ($k^{0}\geq 1$), and $0$ otherwise. 
Then for $x>_{\R}0$, the area between the horizontal axis and the graph of $|\kappa|$ on $[0,x]$ is just a finite collection of (bounded) rectangles, i.e.\ $|\kappa|$ is definitely R-integrable on $[0,x]$ for $x<1$.
In particular, if $x\geq_{\R}1-\frac{1}{2^{k}}$, there are at least $k$ rectangles to the left of $x$; the first has base $1/2$ and area $1$, the second one base $1/4$ and area $1/2$, \dots, the $k$-th one has base $1/2^{k}$ and area $1/k$.  The R-integral $\int_{0}^{x}|\kappa|$ is thus at least $\sum_{i=1}^{k}\frac{1}{i}$.  The limit of the latter is the \emph{divergent} harmonic series, and item \eqref{zwaken} from Theorem \ref{coromag} yields that $|\kappa|$ is not gauge integrable on $I$ with a modulus.
To prove that $\kappa$ is gauge integrable on $I$, note that \eqref{hersing} allows us to `piece together' gauges.  
The following gauge modulus is based on that idea:
\[
\delta_{\eps}(x):=
\begin{cases}
d(x, E) & x \in [0,1]\setminus E  \\
\frac{\eps}{4^{(k+1)}}& x=_{\R}a_{k}\\
2^{-m(\eps)}&x=_{\R}1\\
\end{cases},
\]
where $E$ is the set consisting of the real $1$ and all $a_{k}$, and where $m(\eps)$ is such that $m(\eps)\geq \frac{1}{\eps}$ and the tail of the alternating harmonic series satisfies $|\sum_{k=n}^{\infty}\frac{-1^{k+1}}{k}|\leq \eps$ for $n\geq m(\eps)$.
We leave it as an exercise that this gauge can be defined in $\ACAo$.
The proof that $\delta_{\eps}$ is a gauge for $\kappa$ is completely straightforward and elementary, but somewhat long and tedious.  Hence, we omit this proof and refer to \cite{bartle}*{p.\ 35}. 
\end{proof}
Example \ref{splitskop} notwithstanding, we now prove that the base theory in Theorem~\ref{firstje} can be weakened to weak K\"onig's lemma. 
\begin{cor}
The equivalences in Theorem \ref{firstje} can be proved in $\RCAo+\WKL$. 
\end{cor}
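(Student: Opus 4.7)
The plan is to observe that in the proof of Theorem~\ref{firstje}, $\ACAo$ is invoked in only one place: the implication $\HBU \to$ Uniqueness, where the finite sub-cover $(y_1,\ldots,y_k)$ from $\HBU$ is processed into a $\delta$-fine tagged partition by sorting and pruning endpoints via $(\mu^2)$. The other two implications, Uniqueness $\to$ Consistency and Consistency $\to \HBU$, rest on purely definitional moves: a constant gauge $d_\delta$ makes ``$P$ is $d_\delta$-fine'' coincide with ``$\|P\|\leq\delta$'', and a $\Psi_0$ without a finite sub-cover makes the $\delta_0$-fine quantifier vacuously true. Neither step uses comprehension beyond $\RCAo$, so both carry over to $\RCAo+\WKL$ verbatim.

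It therefore suffices to show in $\RCAo+\WKL+\HBU$ that every gauge $\delta:\R\to\R^+$ admits a $\delta$-fine tagged partition of $[0,1]$; the $\delta_3 := \min(\delta_1,\delta_2)$ argument from Theorem~\ref{firstje} then yields Uniqueness. Given the sub-cover with $\alpha_i := \delta(y_i)$, the plan is to replace $(\mu^2)$-sorting-and-pruning by $\WKL$ on a finitely-branching tree $T$. Nodes of $T$ at depth $n$ are constant sequences $(c, c, \ldots, c)$ of length $n$, where $c$ encodes a pair $(\pi, S)$ consisting of a permutation $\pi$ of $\{1, \ldots, k\}$ and a subset $S \subseteq \{1, \ldots, k\}$; such $c$ is in $T$ at depth $n$ iff no $\Sigma^0_1$-witness at stage $n$ refutes $(\pi, S)$, that is: no strict inversion $y_{\pi(j)} > y_{\pi(j+1)}$ is witnessed, no $i \in S$ is witnessed to be strictly nested in another $j \in S$, and the kept tags in $\pi$-order witnessably form an overlapping chain of cover intervals reaching from $0$ to $1$. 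The genuinely sorted-and-pruned pair, which exists in the metatheory, satisfies every such constraint at every finite stage, so $T$ is infinite and $\WKL$ extracts the constant path whose code yields a valid $(\pi^*, S^*)$.

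From $(\pi^*, S^*)$, the partition is then built by choosing rational partition endpoints strictly between consecutive kept tags $y_{\pi^*(j)}$ with $j \in S^*$ (allowing degenerate intervals where adjacent kept tags turn out equal), and tagging each resulting sub-interval with its associated $y_{\pi^*(j)}$. The $\delta$-fineness of this partition is guaranteed by the chain property encoded in $S^*$, and all constructions beyond the $\WKL$-extraction are routine $\Sigma^0_1$-searches in $\RCAo$. The main obstacle is formulating $T$ so that tree membership at each depth is decidable from the accumulated $\Sigma^0_1$-witnesses, the true sort-and-prune pair renders $T$ infinite, and the constant $\WKL$-path yields a pair enabling the partition construction; once $T$ is set up correctly, $\WKL$ replaces $(\mu^2)$ at exactly the location where sorting and pruning were needed, completing the proof.
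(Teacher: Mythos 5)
Your diagnosis of where $(\mu^{2})$ enters agrees with the paper's: only $\HBU\rightarrow$\emph{Uniqueness} uses it, and only to turn the finite sub-cover into a $\delta$-fine tagged partition. From that point on, however, your route and the paper's are completely different. The paper does not try to carry out the sorting-and-pruning in the weaker system at all: it applies the law of excluded middle to $(\mu^{2})$. If $(\mu^{2})$ holds one is in $\ACAo$ and the original proof of Theorem \ref{firstje} applies verbatim; if $\neg(\mu^{2})$ holds then every $\R\di\R$-function is continuous (Kohlenbach), $\WKL$ upgrades this to uniform continuity on $[0,1]$, so every gauge is bounded below there, $\delta$-fineness collapses to a mesh condition, gauge integrability collapses to Riemann integrability, and uniqueness is already available in $\RCA_{0}$. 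This dichotomy is the standard device for lowering base theories from $\ACAo$ to $\RCAo+\WKL$ in this setting, and it makes the corollary a four-line argument. Your plan instead aims to produce the $\delta$-fine partition outright in $\RCAo+\WKL+\HBU$ via a $\Pi^{0}_{1}$-tree of candidate sort-and-prune data; if completed it would prove something genuinely stronger (the existence of $\delta$-fine partitions over $\RCAo+\WKL+\HBU$, not merely the three equivalences), but at the price of considerably more work.

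The genuine gap in your plan as written is the infinitude of $T$. You justify it by saying the correctly sorted-and-pruned pair ``exists in the metatheory''; that is not an argument inside $\RCAo+\WKL+\HBU$, which is where $T$ must be proved infinite before $\WKL$ can be applied. The internal argument is not free: the natural proof that a valid pair $(\pi,S)$ exists (sort by repeated insertion using $x\leq_{\R}y\vee y\leq_{\R}x$, then prune) is an induction on $k$ on a formula of the shape ``bounded $\exists$ followed by $\Pi^{0}_{1}$'', which is not obviously within the $\Sigma^{0}_{1}$-induction available in $\RCAo+\WKL$. The alternative is to argue directly that if every pair is refuted by stage $n$, then the finitely many witnessed strict inequalities can be assembled into a point of $[0,1]$ lying outside $\bigcup_{i\leq k}I^{\delta}_{y_{i}}$, contradicting the choice of sub-cover; that is a genuine combinatorial lemma you would have to state and prove, not a routine observation. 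None of this looks insurmountable, but it is precisely the work that the paper's excluded-middle dichotomy sidesteps, so unless your goal is the stronger statement about fine partitions, the paper's argument is the one to adopt.
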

\begin{proof}
As noted in the proof, only $\HBU\di \eqref{itemone}$ in Theorem \ref{firstje} requires $(\mu^{2})$.   
To prove this implication in $\RCAo+\WKL$, note that in case $(\mu^{2})$, we may use the proof of Theorem \ref{firstje}.
In case of $\neg(\mu^{2})$, all $\R\di \R$-functions are continuous by \cite{kohlenbach2}*{Prop.\~3.12}.  
Thanks to $\WKL$, all $\R\di \R$-functions are uniformly continuous on $[0,1]$ by \cite{kohlenbach4}*{Prop.\ 4.10}. 
Hence, the definition of gauge integral reduces to that of Riemann integral, and the latter is even unique in $\RCA_{0}$. 
The law of excluded middle as in $(\mu^{2})\vee \neg(\mu^{2})$ now finishes the proof.
\end{proof}
Fifth, we discuss in what sense we may evaluate (general) gauge integrals. 
\begin{rem}[Computing integrals]\label{engauged}\rm
In the case of the R-integral, a modulus (of R-integration) computes a $\delta>0$ in terms of any $\eps>0$ as in Definition \ref{GID}.  
Hence, if $P_{n}$ is the equidistant partition of $I$ with mesh $1/2^{n}$, we know that $S(P_{n}, f)$ converges to the R-integral of $f$ on $I$, and 
the modulus provides a rate of convergence.  For the gauge integral, there is no analogue of the equidistant partition: even given a gauge modulus $\delta(\eps, x)$, 
we need to find, say for every $\eps>0$, a $\delta(\eps, \cdot)$-fine partition $Q_{\eps}$; only then can we consider the limit of $S(Q_{\eps}, f)$ for $\eps\di 0$, which converges to 
the gauge integral of $f$ on $I$ as in Theorem \ref{cauct}.  To find such a partition, the only option (we can imagine) is to consider $\cup_{x\in I}(x-\delta(\eps, x), x+\delta(\eps, x))$ and apply the realiser $\Omega$ for $\HBU$ as in \eqref{1337} to obtain   
a finite sub-cover.  The latter can be modified using $\mu^{2}$ into a $\delta(\eps, \cdot)$-fine partition.  
\end{rem}
Sixth, two of the redeeming features of the gauge integral are its simplicity (via the similarity to the Riemann integral) and its generality.  
Indeed, the gauge integral boasts the most general version of the fundamental theorem of calculus and is `maximally' closed under improper integrals via Hake's theorem.  
We now consider the former theorem, which indeed only requires a modulus of differentiability.  
\begin{thm}[$\RCAo+\HBU$]\label{FTC}
Let $f:\R\di \R$ be differentiable with a modulus.   Then $\int_{a}^{b}f' = f(b)-f(a)$, and the latter modulus provides a gauge modulus. 
\end{thm}
\begin{proof}
The textbook proof (see e.g.\ \cite{bartle}*{p.\ 58}) amounts to little more than manipulation of definitions and of course goes through in $\RCAo+\HBU$. 
\end{proof}
It goes without saying that restricting the fundamental theorem of calculus to e.g.\ measurable functions (as is done in e.g.\ \cite{walshout}) flies in the face of the generality the gauge integral enjoys.  

\smallskip

Finally, we discuss some foundational implications of the above results.  
\begin{rem}\label{fefferketoch}\rm 
Feferman discusses in \cite{fefermanlight}*{V} what fragments of mathematics are necessary for the development of physics. 
He claims that the logical system \textbf{W}, a conservative extension of Peano arithmetic, suffices for this purpose.   
Feferman also discusses two purported (exotic) counterexamples involving non-measurable sets and non-separable spaces; he shows that these are rather fringe in physics, if part of the latter at all.  

\smallskip

However, by contrast, Feynman's path integral and the associated diagrammatic approach are central to large parts of modern physics.   To the best of our knowledge, 
the gauge integral is unique in that it provides a formalisation (of part) of Feynman's formalism that remains close to Feyman's development and ideas (based on Riemann sums), as discussed in \cite{mully}*{\S A.2} and \cite{burkdegardener}*{Ch.\ 10}.  
Moreover, the gauge integral avoids the non-physical concept of imaginary time (\cite{pouly,nopouly,nopouly2,nopouly3}). 
Hence, \emph{if} one requires that a mathematical formalisation remains close to (the original treatment in) physics, \emph{then} there seems to be no choice other than the      
gauge integral for the formalisation of Feynman's path integral.  As established in this section, the basic development of the gauge integral requires $\HBU$, and the latter is not provable in any $\SIXK$, a system much stronger than \textbf{W}.  Thus, Feferman's above claim seems incorrect, \emph{assuming} the aforementioned caveat concerning formalisations.    
\end{rem}
\subsubsection{Extra data and the gauge integral}\label{extradata}
We address a possible criticism that may be levelled at the results in the previous section.  
In particular, it is a natural question (from the pov of computability theory) whether adding `extra data' to the definition of the 
gauge integral leads to an integration theory in weaker systems. 

\smallskip

Before we answer this question, we emphasise that the definition of the gauge integral as in Definition \ref{GID} is taken verbatim from the literature, except the 
final item, the existence of which is however taken for granted throughout the literature.   Moreover, the first step in the development of the gauge integral is always the use of 
Cousin's lemma (or open-cover compactness) to show that the definition of the gauge integral `makes sense'.  For instance, Swartz writes the following.
\begin{quote}
In order for [the definition of the gauge integral] to make sense, there is one matter that needs to be
addressed. We must show that every gauge $\gamma$ has at least one $\gamma$-fine tagged partition. (\cite{zwette}*{p.\ 6})
\end{quote}
The next step is then always to show that the gauge integral is unique, and this proof is based on the existence of $\gamma$-fine partitions (see again \cite{zwette}*{p.\ 6}).
The same approach may be found in \cite{bartle, mullingitover}.  
Thus, the previous section deals with `actual mathematics' or `theorems as they stand', i.e.\ we have avoided the use of `extra data' to the maximal extent possible, 
as mandated by Simpson in \cite{simpson2}*{I.8.9}.  

\smallskip

\emph{Nonetheless}, for a variety of reasons, one may wish to `constructivise' the definition of the gauge integral, i.e.\ build in extra data to guarantee a development in weaker systems (than $\Z_{2}^{\Omega}$). 
We consider one possible/obvious such `richer' definition, and show that one still readily obtains $\HBU$.  
Now, the latter is needed in the proof of Theorem \ref{firstje} to guarantee the existence of $\delta$-fine partitions for arbitrary gauges $\delta$.  
Hence, it seems that the most obvious piece of `extra data' is the requirement that such a partition is given \emph{together with} the gauge, as follows.
\bdefi
A function $f:I\di \R$ is \emph{strongly gauge integrable} on $I$ if there is $A\in \R$ such that for all $\eps>0$, there is a gauge $\delta:\R\di \R^{+}$ such that  
\be\label{braaaak}
(\forall P\in \textsf{tp})(\textup{$P$ is $\delta$-fine }\di |S(f, P)-A|<_{\R}\eps)\wedge (\exists Q\in \textsf{tp})(\textup{$Q$ is $\delta$-fine}).
\ee
The notion of \emph{strong} gauge modulus is defined similarly.   
\edefi
Note that the second conjunct in \eqref{braaaak} guarantees that the antecedent in the first conjunct of \eqref{braaaak} cannot be vacuously true.  One may cherish the hope that this `extra data' obviates the use of $\HBU$, but we now 
show that the latter is still readily obtained from basic properties of the gauge integral.    

\smallskip

To this end, we now turn to the fundamental theorem of calculus for the gauge integral; this is the most general formulation of the well-known phenomenon that integration and differentiation cancel out.   
As noted above, the `standard' proof of the fundamental theorem of calculus in \cite{bartle}*{p.\ 58}, \cite{leekessjorsj}*{p.\ 6}, \cite{mullingitover}*{Ch.~2.4}, and \cite{zwette}*{Ch.~1} also 
establishes that a gauge modulus for the integral in $\int_{a}^{b}F' = F(b)-F(a)$ is simply any modulus of differentiability of $F$; this modulus provides the `delta' in terms of the `epsilon' and the point at which the derivative is taken.  
Similar constructs and results exist\footnote{As detailed in \cite{bish1}, a modulus function is part and parcel of the definition of a (uniformly) continuous and differentiable functions.  Any modulus of uniform continuity is also a modulus of Riemann integrability, as can be gleaned from \cite{bish1}*{p.\ 47}.} in e.g.\ Bishop's constructive analysis, and other computational approaches to mathematics.
\begin{thm}[$\FTC_{\textsf{strong}}$]
If $F:\R\di \R$ is differentiable with a modulus on $I$ and derivative $F'$, then the latter is strongly gauge integrable with the same modulus and we have $F(1)-F(0)=_{\R}\int_{0}^{1}F'$. 
\end{thm}
\begin{thm}\label{FUTC}
The system $\RCAo$ proves $\HBU\asa \FTC_{\textsf{\textup{strong}}}$.
\end{thm}
\begin{proof}
The forward direction follows by the usual proof from the literature;  in a nutshell, for the modulus of differentiability $\lambda x.\delta_{\eps}(x)$ of $F$,
any $\delta_{\eps}$-fine partition $P$ satisfies $|F(1)-F(0)-S(F', P)|<\eps$ using standard calculus tricks, known as `telescoping sums' and the `straddle lemma' (see e.g.\ \cite{bartle}*{p.\ 57-58}).
Note that $\HBU$ implies the equivalence between `normal' and strong gauge integrability. 

\smallskip

For the reverse direction, fix $\Psi:\R\di \R^{+}$ and pick any function $F:\R\di \R$ with derivative $F'$ and modulus of differentiability $G:\R^{2}\di \R$.  Then define $G_{0}(x, \eps)$ as the minimum of $G(x, \eps)$ and $\Psi(x)$.  
Clearly, $G_{0}$ is also a modulus of differentiability for $F$, and $\FTC_{\textsf{strong}}$ implies that there is a $G_{0}$-fine partition of the unit interval, immediately yielding a finite sub-cover for the canonical cover associated to $\Psi$. 
In this way, $\HBU$ follows, and we are done. 
\end{proof}
While the previous theorem is not particularly deep, it does suggest that a constructive treatment of the gauge integral is not entirely trivial. 

\subsection{Nets and compactness}\label{klipel}
We show that the Bolzano-Weierstrass theorem \emph{for nets} implies $\HBU$.  More results of this nature, e.g.\ for the monotone convergence theorem and the Dini and Arzel\`a theorems, may be found in \cite{samnet}.

\smallskip

The move to more and and more abstract mathematics can be quite concrete and specific: Moore presented a framework called \emph{General Analysis} at the 1908 ICM in Rome (\cite{mooreICM}) that was to be a `unifying abstract theory' for various parts of analysis.  
For instance, Moore's framework captures various limit notions in one abstract concept (\cites{moorelimit1}).  
This theory also included a generalisation of the concept of \emph{sequence} beyond countable index sets, nowadays called \emph{nets} or \emph{Moore-Smith sequences}.  
These were first described in \cite{moorelimit2} and formally introduced by Moore and his student Smith in \cites{moorsmidje}. 
In this section, we derive $\HBU$ from the Bolzano-Weierstrass theorem for nets in $[0,1]$ and indexed by Baire space.  Since nets are a generalisation of sequences, the latter theorem thus provides a `unified' notion of compactness, implying both sequential and open-cover compactness.  

\smallskip  

We first need the following standard definition (see e.g.\ \cite{ooskelly}*{Ch.\ 2})
\bdefi[Nets]\label{nets}
A set $D\ne \emptyset$ with a binary relation `$\preceq$' is \emph{directed} if
\begin{enumerate}
 \renewcommand{\theenumi}{\alph{enumi}}
\item The relation $\preceq$ is transitive, i.e.\ $(\forall x, y, z\in D)([x\preceq y\wedge y\preceq z] \di x\preceq z )$.
\item The relation $\preceq$ is reflexive, i.e.\ $(\forall x\in D)(x\preceq x)$.
\item For $x, y \in D$, there is $z\in D$ such that $x\preceq z\wedge y\preceq z$.\label{bulk}
\end{enumerate}
For such $(D, \preceq)$ and topological space $X$, any mapping $x_{d}:D\di X$ is a \emph{net} in $X$.  
\edefi
The relation `$\preceq$' is often not mentioned together with the net; we also write $d_{1},\dots, d_{k}\succeq d$ as short for $(\forall i\leq k)(d_{i}\succeq d)$.  
\bdefi[Convergence of nets]
If $x_{d}$ is a net in $X$, we say that $x_{d}$ \emph{converges} to the limit $\lim_{d} x_{d}=y\in X$ if for every neighbourhood $U$ of $y$, there is $d\in D$ such that for all $e\succeq d$, $x_{e}\in U$. 
\edefi
\bdefi[Sub-nets]\label{demisti}
A \emph{sub-net} of a net $x_{d}$ with directed set $(D, \preceq_{D})$, is a net $y_{b}$ with directed set $(B, \preceq_{B})$ such that there is a function $\phi : B \di D$ such that:
\begin{enumerate}
 \renewcommand{\theenumi}{\alph{enumi}}
\item the function $\phi$ satisfies $ y_{b} = x_{\phi(b)},$
\item $(\forall d\in D)(\exists b_{0}\in B)(\forall b\succeq_{B} b_{0})(\phi(b)\succeq_{D} d)$.
\end{enumerate}
\edefi
In this section, we only study directed sets that are subsets of Baire space, i.e.\ as given by Definition \ref{strijker}.   
Similarly, we only study nets $x_{d}:D\di \R$ where $D$ is a subset of Baire space.  Thus, a net $x_{d}$ in $\R$ is just a type $1\di 1$ functional with extra 
structure on its domain $D$ provided by `$\preceq$' as in Definition \ref{strijker}.  
\bdefi[$\RCAo$]\label{strijker}
A `subset $D$ of $\N^{\N}$' is given by its characteristic function $F_{D}^{2}\leq_{2}1$, i.e.\ we write `$f\in D$' for $ F_{D}(f)=1$ for any $f\in \N^{\N}$.
A `binary relation $\preceq$ on a subset $D$ of $\N^{\N}$' is given by the associated characteristic function $G_{\preceq}^{(1\times 1)\di 0}$, i.e.\ we write `$f\preceq g$' for $G_{\preceq}(f, g)=1$ and any $f, g\in D$.
Assuming extensionality on the reals as in item \eqref{REXTJE} of Definition \ref{keepintireal}, we obtain characteristic functions that represent subsets of $\R$ and relations thereon.  
Using pairing functions, it is clear we can also represent sets of finite sequences (of reals), and relations thereon.  
\edefi
A basic result is that a topological space $X$ is compact if and only if every net in $X$ has a convergent sub-net. 
Let $\BW_{\net}$ be the Bolzano-Weierstrass theorem for nets, i.e.\ the statement that every net in the unit interval has a convergent sub-net, as can be found in e.g.\ \cite{reedafvegen}*{p.\ 98}.  
Note that $\BW_{\net}$ is assumed to be restricted as in Definition \ref{strijker}. 
We have the following theorem. 
\begin{thm}\label{merdes}
The system $\RCAo+\BW_{\net}$ proves $\HBU$.
\end{thm}
\begin{proof}
Note that $\BW_{\net}$ implies the monotone convergence theorem, as sequences are clearly nets.  Hence, we have access to 
$\ACA_{0}$ by \cite{simpson2}*{III.2.2}.  Now, in case $\neg(\exists^{2})$, all functions on $\R$ are continuous by \cite{kohlenbach2}*{Prop.\ 3.12}, and $\HBU$ reduces to $\WKL$ by \cite{kohlenbach4}*{\S4}.
We now prove $\HBU$ in case $(\exists^{2})$, which finishes the proof using the law of excluded middle.
Thus, suppose $\neg\HBU$ and fix some $\Psi:I\di \R^{+}$ for which $\cup_{x\in I}I_{x}^{\Psi}$ does not have a finite sub-cover.
Let $D$ be the set of all finite sequences of reals in the unit interval, and define `$v \preceq_{D} w$' for $w, v\in D$ if $\cup_{i<|v|}I_{v(i)}^{\psi}\subseteq \cup_{i<|w|}I_{w(i)}^{\psi}$, i.e.\ 
the set generated by $w$ includes the set generated by $v$.  Note that $(\exists^{2})$ suffices to define $\preceq_{D}$.  Clearly, the latter is transitive and reflexiv, also satisfies 
item \eqref{bulk} in Definition \ref{nets}.  To define a net, consider 
\be\label{okl}
(\forall w^{1^{*}}\in [0,1])(\exists q \in \Q\cap [0,1])\underline{(q\not \in \cup_{i<|w|}I_{w(i)}^{\Psi})}, 
\ee
which again holds by assumption.  Note that the underlined formula in \eqref{okl} is decidable thanks to $(\exists^{2})$.  
Applying $\QFAC^{1,0}$ to \eqref{okl}, we obtain a net $x_{w}$ in $[0,1]$, which has a convergent (say to $z_{0}\in I$) sub-net $y_{b}=x_{\phi(b)}$ for some directed set $(B, \preceq_{B})$ and $\phi:B\di D$, by $\BW_{\net}$.
By definition, the neighbourhood $U_{0}=I_{z_{0}}^{\Psi}$ contains all $y_{b}$ for $b\succeq_{B} b_{1}$ for some $b_{1}\in B$.  However, taking $d=\langle z_{0} \rangle$ in the second item in Definition \ref{demisti}, there is also $b_{0}\in B$ such that $(\forall b\succeq_{B} b_{0})(\phi(b)\succeq_{D} \langle z_{0}\rangle)$.
By the definition of `$\preceq_{B}$', $\phi(b)$ is hence such that $\cup_{i<|\phi(b)|}I_{\phi(b)(i)}^{\Psi}$ contains $U_{0}$, for any $b\succeq_{B} b_{0}$.  Now use item~\eqref{bulk} from Definition~\ref{nets} to find $b_{2}\in B$ satisfying $b_{2}\succeq_{B} b_{0}$ and $b_{2}\succeq_{B} b_{1}$.  
Hence, $y_{b_{2}}=x_{\phi(b_{2})}$ is in $U_{0}$, but $\cup_{i<|\phi(b_{2})|} I^{\Psi}_{\phi(b_{2})(i)}$ also contains $U_{0}$, i.e.\ $x_{\phi(b_{2})}$ must be \emph{outside} of $U_{0}$ by the definition of $x_{w}$, a contradiction. 
In this way, we obtain $\HBU$ in case $(\exists^{2})$, and we are done.
\end{proof}
Finally, we note that Moore and Smith already proved versions of the Bolzano-Weierstrass, Dini, and Arzel\`a theorem in \cite{moorsmidje}.

\section{Main results II}

\subsection{Jumping to the superjump}\label{RMR2}
We show that the Lindel\"of lemma \emph{for Baire space} and Feferman's $\mu^{2}$ together give rise to the Suslin functional $S$ and the \emph{superjump} $\SJ$.  
We introduce the latter in Section \ref{pofu}, while the Lindel\"of lemma \emph{for Baire space}
 and the associated functional $\Xi$ (computing the countable sub-cover) are introduced in Section~\ref{lindeb}.  
The following results are established below.
\begin{enumerate}
\item The superjump $\SJ$ is computable in the special fan functional $\Theta$ and the Suslin functional $S$ (Section \ref{pofu}).
\item The Suslin functional $S$ is (uniformly) computable in Feferman's $\mu$ and the functional $\Xi$ which computes the countable sub-cover from the Lindel\"of lemma for Baire space (Section \ref{lindeb}).
\end{enumerate}
As a consequence, the combination of Feferman's $\mu$ and any such $\Xi$ computes the superjump $\SJ$.
We recall the fact that the special fan functional $\Theta$ is not unique, and neither is `the' aforementioned functional $\Xi$.

\subsubsection{Computing the superjump}\label{pofu}
We show that the combination of the Suslin functional $S$ and the special fan functional $\Theta$ computes the \emph{superjump}.  
The latter corresponds to the Halting problem for computability on type two inputs.  Indeed, the superjump $\SJ^{3}$ was introduced in \cite{supergandy} by Gandy (essentially) as follows:
\be\tag{$\SJ^{3}$}
\SJ(F^{2},e^{0}):=
\begin{cases}
0 & \textup{ if $\{e\}(F)$ terminates}\\
1 & \textup{otherwise}
\end{cases},
\ee
where the formula `$\{e\}(F)$ terminates' is a $\Pi_{1}^{1}$-formula defined by Kleene's S1-S9.

\smallskip

As to its history, Harrington has proved that the first ordinal not computable in $\SJ$ is the first recursively Mahlo ordinal (\cite{superharry}).  In turn, the latter ordinal appears in the study of constructive set and type theory and the associated proof theory (\cite{manom, manom2, manom3}).  In particular, $\{R\subseteq \N: R \textup{ is computable from } \SJ\}$ is the smallest $\beta$-model of $\Delta_{2}^{1}\textsf{-CA}_{0}+\textsf{(M)}$, where \textsf{(M)} expresses that every true $\Pi_{3}^{1}$-sentence with parameters already holds in a $\beta$-model of $\Delta_{2}^{1}$-comprehension (\cite{manom}).   
As discussed in Remark~\ref{predifiel}, $\SJ$ lives far outside of predicative mathematics.
\begin{thm}\label{super} 
The superjump $\SJ$ is computable in any $\Theta$ satisfying $\SCF(\Theta)$ and the Suslin functional $S$.
\end{thm}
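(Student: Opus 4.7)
The plan is to express the termination of $\{e\}(F)$ as a $\Pi_{1}^{1}$ condition whose parameters are type-1, so that the Suslin functional $S$ can evaluate it. Recall that, by the general theory of Kleene's \textup{S1-S9}, the computation $\{e\}(F)$ terminates if and only if the associated subcomputation tree $T_{e,F}$, whose nodes record the history of subcomputations invoked, is well-founded. Membership in $T_{e,F}$ is arithmetical in $F$ used as an oracle, so well-foundedness, namely ``there is no infinite descending branch'', is $\Pi_{1}^{1}$ in $F$. The obstacle is that $F$ is of type $2$, while $S$ acts only on type-1 inputs.

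The main idea is to use $\Theta$ to extract, from $F$, the finite type-1 information actually relevant to $T_{e,F}$. The scheme \textup{S8} applies $F$ to functions $\lambda n.\{d\}(n,\vec{y})$ that are computable from the data available at the corresponding node; by unfolding these applications and coding the potential queries into binary sequences, one builds an auxiliary $G_F^{2}:2^{\N}\to\N$ whose canonical cover of Cantor space captures all such queries. Applying $\Theta(G_F)$ returns a finite list $\langle f_1,\dots,f_k\rangle$ in $2^{\N}$ with $\bigcup_{i\le k}[\overline{f_i}\, G_F(f_i)]$ covering $2^{\N}$. Combined with the finite record of the $F$-values $F(f_i)$, this list provides a type-1 ``continuous approximation'' of $F$ that suffices to reconstruct the portion of $T_{e,F}$ visible along any given finite branch. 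Feferman's $\mu^{2}$, which is computable from $S$, is available throughout to simulate arithmetic-in-$F$ and manage the bookkeeping.

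Once the type-2 parameter has been compressed in this way, well-foundedness of $T_{e,F}$ becomes a $\Pi_{1}^{1}$ assertion on purely type-1 data (the $\Theta$-output, the finitely many $F$-values, and codes for the node-generating procedure), and can be decided by $S$; the result is then assembled into a value for $\SJ(F,e)$. The main obstacle will be the construction of $G_F$ so that a \emph{single} call to $\Theta(G_F)$ exposes enough of $F$'s behavior: non-terminating computations may in principle probe $F$ on unboundedly many arguments, so one has to organise the unfolding of \textup{S8}-applications into a bounded-depth scheme whose ``fringe'' is a subset of $2^{\N}$ compact in the sense of $\SCF(\Theta)$. Packaging this iteration so that $S$ can evaluate the resulting $\Pi_{1}^{1}$ formula in a single application is the technical heart of the proof.
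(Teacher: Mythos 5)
Your reduction founders at its first step: for Kleene S1--S9 computability relative to a type-two $F$, membership in the subcomputation tree $T_{e,F}$ is \emph{not} arithmetical in $F$. At an S4 (composition) node the children depend on the \emph{value} of a subcomputation, and at an S8 node one must already know that $\lambda b.\{d\}(F,b,\vec a)$ is total before $F$ can be applied; the set of computation tuples is therefore the least fixed point of a transfinite positive inductive definition, not a tree given in advance whose well-foundedness $S$ could simply test. This is precisely why $\SJ$ is not computable from $S$ and $F$ alone, and it is the gap your appeal to ``well-foundedness is $\Pi^1_1$'' papers over. Your second step is also problematic: since $F$ is in general discontinuous, the finite list $\Theta(G_F)$ together with the finitely many values $F(f_i)$ does not determine $F$ on the (non-binary, potentially unboundedly many) functions queried during a computation, so no ``continuous approximation'' of $F$ can be recovered from a finite subcover. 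You correctly identify the construction of $G_F$ as the technical heart, but the compression strategy you propose for it cannot be completed.

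The proof in the paper uses $\Theta$ for a different purpose. Let $X_F\subseteq 2^{\N}$ be the set of total binary extensions of the partial diagonal $\alpha_F(e)=\{e\}(F,e)$ (where defined and in $\{0,1\}$). Given any $f\in X_F$ one forms, arithmetically in $f$ and $F$, a candidate set $\Omega_f$ of alleged computation tuples together with a locally defined relation $\prec_{f,F}$; the well-founded part of this relation --- which $S$ can isolate, since well-foundedness of an arithmetically given relation on type-one data is a $\Pi^1_1$ matter --- is provably the genuine computation tree of $F$, and $\SJ(F)$ is primitive recursive in its characteristic function. The guessed diagonal $f$ thus converts the least-fixed-point problem into a well-foundedness problem; $F$ itself remains available as an oracle throughout and is never approximated. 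The functional $\Theta$ enters only to produce some $f\in X_F$: one sets $G_F(f)=0$ for $f\in X_F$ and, for $f\notin X_F$, uses Gandy selection (relative to $F$, $\mu$, $S$) to find $d$ with $\{d\}(F,d)\in\{0,1\}$ and $\{d\}(F,d)\neq f(d)$, setting $G_F(f)=d+1$ so that $[\overline{f}G_F(f)]$ misses $X_F$. Since $\Theta(G_F)$ must cover the nonempty closed set $X_F$, it contains a member of $X_F$, which $S$ can identify. This diagonalisation, rather than any approximation of $F$, is the missing idea.
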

\begin{proof}
We first provide a sketch of the proof as follows. Recall that if $\sigma$ is a finite binary sequence, then $[\sigma]$ is the set of total binary extensions of $\sigma$.
\begin{enumerate}
\item Given $F^{2}$, let $\alpha_F(e) = \{e\}(F,e)$ whenever the value is in $\{0,1\}$, and let $X_F$ be the set of total binary extensions of $\alpha_F$.
\item Compute $G_F$ from $F$ and $S$ with the properties
\begin{itemize}
\item[i)]  if $f \not \in X$, then  $G_F(f) > 0$
\item[ii)] if $G_F(f) > 0$, then $[\bar fG_F(f)]$ does not intersect $X_F$
\item[iii)] for $f \in X_F$, $G_F(f) = 0$.
\end{itemize} 
\item Show that $\SJ(F)$ is uniformly computable in $S$ and $f \in X_{F}$.
\item Since $\Theta(G_F)$ has to intersect $X_F$, and we can decide where, $\SJ(F)$ is  computable in $\Theta$ and $S$, uniformly in $F$.
\end{enumerate}
We work out the proof in full detail below.
\end{proof}
We will now list some basic lemmas needed for the detailed proof of Theorem~\ref{super}. We first define an important concept relating to S1-S9 computability with type two inputs.   Its importance stems from the fact  that it is independent of the choice of input functional $F^{2}$, as follows.
\begin{lem}There is a primitive recursive $\xi$ of type level 1, independent of the choice of $F^{2}$, such that  
$\{\xi(e, \vec a)\}(F,\xi(e,\vec a))$ is resp.\ \($0, 1 $,   {\em undefined}\) whenever $\{e\}(F,\vec a)$ is resp.\ \($= 0$,  $> 0$,  {\em undefined}\). 
\end{lem}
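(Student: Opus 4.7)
The plan is to invoke a standard S-m-n style construction for Kleene's S1-S9. First, I would describe a single S1-S9 algorithm $A$ that, on inputs $F^{2}$, $\vec{a}$, $e$, and an arbitrary placeholder number $d$, proceeds as follows: call $\{e\}(F,\vec a)$; if this call terminates with value $0$, output $0$; if it terminates with a positive value, output $1$; and if it diverges, diverge. The call $\{e\}(F,\vec a)$ is simulated by the universal S1-S9 scheme, and the three-way case distinction on the output is handled by the primitive recursive schemes S3/S4 (testing equality with $0$) — crucially, nothing here depends on the particular functional $F$, so $A$ has a fixed S1-S9 index $e_{0}$ independent of $F$.

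Next, I would apply the S-m-n theorem for S1-S9 (cf.\ \cite{longmann}) to the first two arguments $e, \vec a$: this gives a primitive recursive function $\xi$ of type level $1$, independent of $F$, such that
\[
\{\xi(e,\vec a)\}(F,d) \simeq A(F,\vec a,e,d) \qquad \text{for all } F^{2}, d^{0}.
\]
Since the placeholder $d$ is ignored inside $A$, I may in particular instantiate $d := \xi(e,\vec a)$, yielding
\[
\{\xi(e,\vec a)\}(F,\xi(e,\vec a)) \simeq A(F,\vec a,e,\xi(e,\vec a)),
\]
which by the very definition of $A$ equals $0$ when $\{e\}(F,\vec a)=0$, equals $1$ when $\{e\}(F,\vec a)>0$, and is undefined when $\{e\}(F,\vec a)$ is undefined. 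This is exactly the conclusion of the lemma.

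Two minor points I would want to verify carefully. First, the primitive recursiveness of $\xi$: this is guaranteed by the standard construction of the S-m-n function in Kleene's S1-S9 (essentially substitution of numerals into a fixed index template), but one must check the implementation matches the book-keeping used elsewhere in Section~\ref{pofu}. Secondly, the assertion that $\xi$ is independent of $F$: this is immediate from the fact that $e_{0}$ is a fixed S1-S9 index and the S-m-n function acts only on numerical arguments, never on the functional input. The main (mild) obstacle is therefore just the bureaucratic verification that the three-way branching in $A$ is carried out by genuine S1-S9 schemes rather than by an informal description; once this is set up, the recursion-theoretic conclusion is automatic.
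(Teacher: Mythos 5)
Your argument is correct: the paper states this lemma without any proof, treating it as a routine recursion-theoretic fact, and your construction (a single fixed S1-S9 index $e_{0}$ that simulates $\{e\}(F,\vec a)$ via S9, returns its sign, and ignores a dummy argument $d$, followed by the primitive recursive S-m-n function and the instantiation $d:=\xi(e,\vec a)$) is exactly the standard argument the authors evidently have in mind. You also rightly note the one point worth making explicit, namely that no appeal to the recursion theorem is needed because the self-applied argument is never inspected, so the self-reference in $\{\xi(e,\vec a)\}(F,\xi(e,\vec a))$ is harmless.
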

\begin{lem}
There is a primitive recursive function $\eta$ such that for all $e,\vec a, F$
\[
\{\eta(e)\}(F,\vec a) \simeq \{e\}(F,\vec a) \dminus 1,
\]
where `$\simeq$' means that both sides are undefined or both sides are defined and equal.
\end{lem}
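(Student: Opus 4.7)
My plan is to construct $\eta$ by composing a fixed primitive recursive index for ordinary predecessor with the given index $e$, using the S1--S9 composition scheme in a primitive recursive way.

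First I would fix, once and for all, an S1--S9 index $e_{0}$ for the type-one predecessor function $p(n) := n \dminus 1$. Since $p$ is primitive recursive on the naturals, an explicit such $e_{0}$ is produced from the successor (S1), constant (S2) and primitive recursion (S6) schemes; the index $e_{0}$ depends on no functional parameters, so $\{e_{0}\}(F, n) = n \dminus 1$ uniformly in $F$.

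Next I would invoke the composition scheme (S4) together with the variable-permutation/substitution scheme to get a primitive recursive function $\eta$ with the property that, for every $e$, $F^{2}$ and tuple $\vec a$, the computation $\{\eta(e)\}(F, \vec a)$ is the Kleene composition which first runs $\{e\}(F, \vec a)$ and then, if that computation halts with output $n$, runs $\{e_{0}\}(F, n)$. The dependence of $\eta$ on $e$ is primitive recursive because the S1--S9 schemes provide, at the level of indices, effective \emph{and} primitive recursive operations for composition; $e_{0}$ is a fixed numeral, so only $e$ varies. The correctness clauses $\{\eta(e)\}(F, \vec a)\!\uparrow \Leftrightarrow \{e\}(F, \vec a)\!\uparrow$ and, when defined, $\{\eta(e)\}(F, \vec a) = \{e\}(F, \vec a) \dminus 1$ then follow directly from the clause in Kleene's S1--S9 that a composition is defined exactly when both constituent subcomputations are defined; the inner call is $\{e\}(F, \vec a)$ and the outer call is $\{e_{0}\}$ on a numeral, which always converges.

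The only genuinely delicate point is bookkeeping of argument types: we must ensure that $\eta(e)$ accepts exactly the argument list $(F, \vec a)$ and feeds the numerical output of $\{e\}(F, \vec a)$ into a numerical slot of $\{e_{0}\}$, with $F$ passed through as a dummy parameter. This is handled by a routine application of the permutation/substitution scheme (S8) together with S4, and is the step where one must be careful that the index manipulation remains primitive recursive in $e$ (rather than merely recursive). Once this is verified, the statement $\{\eta(e)\}(F,\vec a)\simeq\{e\}(F,\vec a)\dminus 1$ is immediate from the definition of $\simeq$ and the halting behaviour of composition.
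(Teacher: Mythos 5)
The paper states this lemma without proof, treating it as a standard piece of S1--S9 index arithmetic, so there is no argument of the authors' to compare against; your proposal supplies exactly the intended routine justification and is correct. Fixing an index $e_{0}$ for the (total) predecessor function and forming the composition index via S4 is the right construction: the resulting index is an explicit sequence number built from $e_{0}$ and $e$, hence primitive recursive in $e$, and the convergence claim follows because S4-composition converges iff the inner computation $\{e\}(F,\vec a)$ converges and the outer one does, the latter being automatic since predecessor is total. The only thing to correct is your labelling of the schemes: in the standard numbering used in the paper's references (Longley--Normann, Sacks), primitive recursion is S5 (not S6), permutation of arguments is S6, and S8 is application of the type-two argument $F$ --- so the ``dummy parameter'' bookkeeping you describe is handled by S6 (and S3/S4), not S8. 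This is purely a citation slip and does not affect the argument.
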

\begin{defi}\rm
Let $f$ be a total binary function. By an application of the recursion theorem for Turing computations in oracles we define
\[
[e]_f(\vec a) := 
\begin{cases} 
0& \textup{if $f(\xi(e,\vec a)) = 0$} \\ 
1 + [\eta(e)]_f(\vec a)&\textup{if $f(\xi(e,\vec a)) = 1$}
\end{cases}.
\]
Clearly, if the recursion goes on forever, $[e]_f(\vec a)$ will be undefined. 
\end{defi}
Intuitively speaking and from the outside, $[ \cdot]_f$ may look like an indexing of some  partial functions computable in some functional of type 2, but to what extent this is correct, will depend on the choice of $f$. 

\smallskip

We will now use $F$ to define a relation, mimicking the subcomputation relation relative to $F$, as far as possible. As a cheap trick, we will let an alleged computation tuple be a subcomputation of its own if it is clear that something is wrong, in order to force such objects into the non-well-founded part of the relation.
\begin{defi}\label{formdix}\rm
Given $f$, we let $\Omega_f$ be the set of triples  $(e,\vec a,b)$ such that $[e]_f(\vec a) = b$. Given $F$ as well, define the relation `$\preceq$' \(short for $\preceq_{f,F}$\) on $\Omega_f$ as follows:
\begin{itemize}
\item If $e$ is not a Kleene index for any of S1-S9, we put $(e,\vec a , b) \preceq (e , \vec a , b)$.
\item If $e$ is an index for an initial computation, we let $(e,\vec a , b)$ be a leaf in our ordering if $\{e\}(F,\vec a) = b$, and its own sub-node otherwise. 
This decision will be independent of the choice of the functional $F$.
\item We treat the case S4. The rest of the cases, except S8, are similar or void (e.g.\ S6).
If $e$ is an index for composition  
$\{e\}(F,\vec a) = \{e_1\}(\{e_2\}(\vec a),\vec a)$, $c$ is given and there is a $b$ such that
$ [e_2]_f(\vec a) = b$, $[e_1]_f(b,\vec a) = c$ and $[e]_f(\vec a) = c$,
then we define $(e_2,\vec a , b) \preceq (e,\vec a , c)$ and $(e_1, b,\vec a,c) \preceq (e,\vec a, c)$.  If there is no such $b$ , we let $(e,\vec a, c) \preceq (e,\vec a, c)$.
\item For the case S8, if we have $\{e\}(F,\vec a) = F(\lambda b.\{d\}(F,b,\vec a))$, we let $(e,\vec a, c) \preceq (e,\vec a, c)$ unless $h(b) = [d]_f(b,\vec a)$ is a total function and  $F(h) = c$.
In the latter case, we let $(d,b,\vec a , h(b)) \preceq (e,\vec a , c)$ for all $b$.
\end{itemize}
\end{defi}
The intuitive explanation of Definition \ref{formdix} is as follows: The set of finite sequences $(e,\vec a , b)$ such that $\{e\}(F,\vec a  ) = b$ is defined by a strictly positive inductive definition, so whenever a sequence is added to the set it is either initial or there is a unique set of other sequences in the set causing that we accept the one chosen. These are called \emph{immediate} predecessors in the computation tree. The relation `$\preceq$' is defined on the set of $(e,\vec a , b)$ where $[e]_f(\vec a) = b$ as the immediate predecessor relation wherever the inductive definition of the computation tree is locally correct. 
\begin{lem} \label{lemma1.6.SJ}
For any function $f$, the well-founded segment of $\langle \Omega_f , \preceq_{f,F}\rangle$ is an initial segment of the full computation relation of $F$.
\end{lem}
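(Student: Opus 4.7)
The plan is to prove the lemma by transfinite induction on the well-founded rank in $\langle \Omega_f, \prec_{f,F}\rangle$, maintaining a strengthened inductive hypothesis: every well-founded tuple $(e, \vec a, b)$ satisfies $\{e\}(F, \vec a) = b$ in the standard S1-S9 computation relation of $F$, and moreover the immediate $F$-subcomputations of this equation appear as $\prec_{f,F}$-predecessors of $(e, \vec a, b)$ inside $\Omega_f$. Once both conjuncts are established, ``initial segment'' is immediate: predecessors under the genuine subcomputation relation of $F$ are recognised as $\prec_{f,F}$-predecessors, and predecessors of well-founded elements are well-founded.

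The induction proceeds by cases on the scheme indexed by $e$. For initial indices (S1--S3, S5, S7), Definition \ref{formdix} makes $(e, \vec a, b)$ a $\prec$-leaf precisely when $\{e\}(F, \vec a) = b$; otherwise it is its own predecessor and thus non-well-founded, so a well-founded tuple here is correct by fiat. For S4, if $(e, \vec a, c)$ is well-founded then the self-loop branch of the clause is ruled out, so there exists $b$ with $[e_2]_f(\vec a) = b$, $[e_1]_f(b, \vec a) = c$ and both $(e_2, \vec a, b), (e_1, b, \vec a, c)$ are immediate $\prec$-predecessors of strictly smaller rank. The inductive hypothesis yields $\{e_2\}(F, \vec a) = b$ and $\{e_1\}(F, b, \vec a) = c$, and the S4 rule for $F$ gives $\{e\}(F, \vec a) = c$ with exactly these two tuples as its immediate subcomputations. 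The remaining schemes (other than S8) are either vacuous or strictly analogous to S4.

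The main obstacle, which I would handle separately, is the S8 case, where $\{e\}(F, \vec a) = F(\lambda b.\{d\}(F, b, \vec a))$. Here the clause of Definition \ref{formdix} forces $h(b) := [d]_f(b, \vec a)$ to be total and $F(h) = c$ on pain of becoming a self-loop, and it records every tuple $(d, b, \vec a, h(b))$, as $b$ ranges over $\N$, as an immediate $\prec_{f,F}$-predecessor of $(e, \vec a, c)$. Well-foundedness of $(e, \vec a, c)$ therefore propagates simultaneously to this entire infinite family, so the inductive hypothesis applies pointwise and yields $\{d\}(F, b, \vec a) = h(b)$ for every $b$. Consequently $\lambda b.\{d\}(F, b, \vec a) = h$, and the S8 rule gives $\{e\}(F, \vec a) = F(h) = c$ with exactly the tuples $(d, b, \vec a, h(b))$ as its immediate subcomputations. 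This completes the induction, and with it the proof of the lemma.
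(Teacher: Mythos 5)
Your proof is correct and takes the same route as the paper, which simply asserts the lemma is ``trivial by induction over this well-founded segment''; your case analysis on the Kleene schemes, with the strengthened hypothesis tracking that $\prec_{f,F}$-predecessors coincide with the genuine immediate subcomputations, is a faithful elaboration of exactly that induction.
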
 
\begin{proof}
This is trivial by induction over this well-founded segment.
\end{proof}
\begin{lem} 
For any $f \in X_F$, if $\{e\}(F,\vec a) = b$, then $[e]_f(\vec a) = b$. 
\end{lem}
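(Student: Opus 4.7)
The plan is to prove the claim by induction on the natural number $b$, using the three auxiliary tools from the preceding lemmas: the $\xi$-function that forces a self-referential computation to land in $\{0,1\}$, the $\eta$-function that decrements the value of an index, and the definition of $[e]_f$ by recursion along applications of $\xi$.

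First I would unpack what membership $f \in X_F$ buys us. Since $X_F$ consists of the total binary extensions of $\alpha_F$, and $\alpha_F(n) = \{n\}(F, n)$ whenever that value already lies in $\{0,1\}$, we get the single crucial equation $f(n) = \{n\}(F, n)$ whenever the right-hand side is $0$ or $1$. Composing with $\xi$, this yields: if $\{e\}(F,\vec a)$ is defined, then $f(\xi(e,\vec a))$ equals $0$ when $\{e\}(F,\vec a) = 0$ and equals $1$ when $\{e\}(F,\vec a) > 0$, by the defining property of $\xi$.

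Next I would run the induction on $b$. For the base case $b = 0$: from $\{e\}(F,\vec a) = 0$ and the previous paragraph, $f(\xi(e,\vec a)) = 0$, so the first clause in the definition of $[e]_f$ gives $[e]_f(\vec a) = 0$, as required. For the inductive step, assume the statement for all indices and tuples whose $\{\cdot\}(F,\cdot)$ value is $b-1$, and suppose $\{e\}(F,\vec a) = b > 0$. Then $f(\xi(e,\vec a)) = 1$, so the second clause in the definition of $[e]_f$ applies, giving $[e]_f(\vec a) = 1 + [\eta(e)]_f(\vec a)$. Now apply the $\eta$-lemma: $\{\eta(e)\}(F,\vec a) \simeq \{e\}(F,\vec a) \dminus 1 = b - 1$, which is defined. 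The induction hypothesis then yields $[\eta(e)]_f(\vec a) = b - 1$, whence $[e]_f(\vec a) = b$.

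I do not expect any serious obstacle here; the lemma is essentially a bookkeeping exercise showing that the recursion defining $[e]_f$ correctly simulates the iterated decrement-by-one encoded by $\eta$, and that $f \in X_F$ supplies exactly the right bit at each step. The only mild subtlety is that the induction is on the output value $b$ rather than on any subcomputation structure of $\{e\}(F,\vec a)$; this is important because $[e]_f$ is defined independently of the internal structure of the S1--S9 computation and only sees the sequence of bits $f(\xi(e,\vec a)), f(\xi(\eta(e),\vec a)), \ldots$, so induction on $b$ is the natural parameter.
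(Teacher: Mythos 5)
Your proof is correct and follows exactly the paper's argument: induction on $b$, with the base case handled by the $\xi$-lemma together with $f \in X_F$ forcing $f(\xi(e,\vec a)) = 0$, and the inductive step applying the induction hypothesis at $b \dminus 1$ to the index $\eta(e)$. The paper's version is merely terser; your unpacking of what $f \in X_F$ provides and why the recursion clause for $[e]_f$ fires correctly is the same bookkeeping made explicit.
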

\begin{proof}
We prove this by induction on $b$. 
If $b = 0$, then $\{\xi(e,\vec a)\}(F,\xi(e, \vec a)) = 0$, so $f(\xi(e,\vec a)) = 0 = [e]_f(\vec a)$.
If $b > 0$, we use the induction hypothesis on $b \dminus 1$ for the index $\eta(e)$ and the fact that $[e]_f(\vec a) = b$ in this case.  
\end{proof}
\begin{lem} 
If $f \in X_F$ and $\{e\}(F,\vec a) = b$, then $(e,\vec a , b)$ is in the $\preceq_{f,F}$-well-founded part of $\Omega_f$. Moreover, this well-founded part is exactly the full tree of terminating computations $\{e\}(F,\vec a) = b$ relative to $F$. 
\end{lem}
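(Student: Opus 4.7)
The plan is to prove the first assertion by induction on the (well-founded) subcomputation tree of $\{e\}(F,\vec a)=b$, and then derive the moreover-clause by combining this with Lemma~\ref{lemma1.6.SJ}. Fix $f\in X_F$. The previous lemma already guarantees that $[e]_f(\vec a)=b$ whenever $\{e\}(F,\vec a)=b$, so $(e,\vec a,b)\in\Omega_f$; what remains is to show that the immediate $\prec_{f,F}$-predecessors of $(e,\vec a,b)$ are precisely the immediate subcomputations of $\{e\}(F,\vec a)=b$, because then induction carries the well-foundedness upwards.

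For initial schemes the triple $(e,\vec a,b)$ is declared a leaf by the second clause of Definition~\ref{formdix}, since $\{e\}(F,\vec a)=b$ holds by hypothesis, and a leaf is trivially well-founded. For composition (S4), if $\{e\}(F,\vec a)=\{e_1\}(\{e_2\}(\vec a),\vec a)=c$, let $b_0:=\{e_2\}(F,\vec a)$; the preceding lemma gives $[e_2]_f(\vec a)=b_0$, $[e_1]_f(b_0,\vec a)=c$ and $[e]_f(\vec a)=c$, so the fourth clause of Definition~\ref{formdix} places $(e_2,\vec a,b_0)$ and $(e_1,b_0,\vec a,c)$ as the immediate $\prec_{f,F}$-predecessors of $(e,\vec a,c)$. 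Both are well-founded by the induction hypothesis applied to the strictly shorter subcomputations, hence so is $(e,\vec a,c)$. The remaining cases among S1--S7 and S9 are either void or entirely analogous.

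The subtle case is S8. Suppose $\{e\}(F,\vec a)=F(\lambda b.\{d\}(F,b,\vec a))=c$; then the inner computation $\{d\}(F,b,\vec a)$ terminates for every $b$, giving a total $h$ with $h(b)=\{d\}(F,b,\vec a)$ and $F(h)=c$. Applying the preceding lemma uniformly in $b$ yields $[d]_f(b,\vec a)=h(b)$ for every $b$, so $\lambda b.[d]_f(b,\vec a)$ is the total function $h$ with $F(h)=c$, and the last clause of Definition~\ref{formdix} therefore makes every $(d,b,\vec a,h(b))$ an immediate $\prec_{f,F}$-predecessor of $(e,\vec a,c)$. By the induction hypothesis each such predecessor is well-founded, and since well-foundedness is preserved under taking a set of well-founded predecessors, $(e,\vec a,c)$ is well-founded as well. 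This is the main obstacle: one needs the agreement $[d]_f(b,\vec a)=h(b)$ \emph{simultaneously for all} $b$, not just for a fixed argument, and this is exactly what the previous lemma supplies, using crucially that $f\in X_F$.

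For the moreover-clause, Lemma~\ref{lemma1.6.SJ} already provides the inclusion of the well-founded part of $\langle\Omega_f,\prec_{f,F}\rangle$ into the tree of terminating computations of $F$. The first assertion, just established, furnishes the reverse inclusion: every terminating $\{e\}(F,\vec a)=b$ appears as an element of the well-founded part. The two inclusions together identify the well-founded part of $\langle\Omega_f,\prec_{f,F}\rangle$ with the full terminating computation tree of $F$, as claimed.
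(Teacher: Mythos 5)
Your proof is correct and follows essentially the same route as the paper: the forward inclusion is established by induction over the tree of genuine $F$-computations, using the preceding lemma to see that the $\prec_{f,F}$-predecessors of $(e,\vec a,b)$ are exactly its real subcomputations (with S8 handled via the totality of $\lambda b.[d]_f(b,\vec a)$), and the reverse inclusion is the content of Lemma~\ref{lemma1.6.SJ}. The only cosmetic difference is that the paper re-derives the reverse inclusion by a minimal-counterexample/local-correctness argument rather than citing Lemma~\ref{lemma1.6.SJ} directly, which is the same argument in a different wrapper.
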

\begin{proof}
That the computation tree for computations relative to $F$ is contained in the well-founded part is proved by induction over the tree of real computations. 
Now, if the well-founded part of $\langle \Omega_f,\preceq_{f,F}\rangle$ contains more, we may consider one alleged computation $(e,\vec a , b)$ in $\Omega_f$ that is not a real $F$-computation, but that is minimal as such. Since it is in the well-founded part, $(e,\vec a , b)$ is locally correct, so either it is an initial computation or it has subcomputations that are real (because we consider a minimal one).  Being locally correct, we see in each case that $(e,\vec a , b)$ must be genuine after all. 
\end{proof}
\begin{lem} 
If $f \in X_F$, then $\SJ(F)$ is uniformly computable in $f$, $F$ and $S$. 
\end{lem}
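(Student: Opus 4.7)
The plan is to leverage the identification from the preceding lemma: for $f\in X_F$, the well-founded part of $\langle \Omega_f,\prec_{f,F}\rangle$ is exactly the tree of terminating $F$-computations. Thus $\{e\}(F)$ is defined if and only if there is some $b$ with $(e,\langle\rangle,b)$ in that well-founded part, and it suffices to decide this statement uniformly from $f$, $F$ and $S$.

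First, I would use Feferman's $\mu$ -- which is S1-S9-computable from $S$ -- to decide the $\Sigma^{0}_{1}(f)$-question whether $[e]_f(\langle\rangle)$ is defined, by searching for the least $k$ with $f(\xi(\eta^{(k)}(e),\langle\rangle))=0$. If no such $k$ exists, then the previous lemma forces $\{e\}(F)$ to diverge and the algorithm returns $1$; otherwise I obtain the unique candidate $b:=[e]_f(\langle\rangle)$. Second, inspecting Definition~\ref{formdix}, once $\mu$ is in hand the binary relation ``$u\prec_{f,F}v$'' is arithmetical in $F$ and $f$: the only nontrivial ingredients are applications of the partial object $[\cdot]_f$, the totality check in the S8 clause (a $\Pi^{0}_{2}(f)$-predicate decidable from $\mu$), and isolated evaluations of $F$. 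Accordingly, ``there exists an infinite descending $\prec_{f,F}$-chain starting at $(e,\langle\rangle,b)$'' is a $\Sigma^{1}_{1}$-statement whose matrix is arithmetic in $(F,f,\mu)$; I would encode it as $(\exists g^{1})(\forall n^{0})\,h(\overline{g}n)=0$ for a suitable $h^{1}$ built primitive recursively from $F$, $f$, and $\mu$, and a single call to $S(h)$ then decides well-foundedness. Its complement is $\SJ(F,e)$.

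The main obstacle I expect lies in that last reduction: faithfully absorbing the type-two parameter $F$ into the type-one function $h$ to which $S^{2}$ is applied. Evaluating $h(\overline{g}n)$ must simulate all the $F$-applications triggered by the S8 clauses when verifying $g(k+1)\prec_{f,F}g(k)$ for $k<n$, and one must check that these $F$-evaluations can be performed uniformly from $\overline{g}n$ together with the oracles $F$, $f$, and $\mu$. Once this encoding is in place, the resulting Kleene index depends on $F$ only through oracle applications and on $f$ only through the prescribed search procedure, so a single index witnesses the computation across all $f\in X_F$, yielding the uniform computability claimed.
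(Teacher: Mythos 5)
Your proposal is correct and follows essentially the same route as the paper: the paper's one-line proof implicitly uses $S$ to isolate the well-founded part of $\prec_{f,F}$, which by the preceding lemma coincides with the set of genuine terminating $F$-computations, and your argument simply fills in the details (the $\mu$-search in $f$ for the unique candidate $b$, and the packaging of the descending-chain predicate into a type-one function to which $S$ is applied).
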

\begin{proof}
From the data, we can compute the characteristic function of $\{(e,\vec a , b) \mid \{e\}(\vec a) = b\}$, and $\SJ(F)$ is primitive recursive in this characteristic function. 
\end{proof}
We are now ready to provide the proof of Theorem \ref{super} as follows.  
\begin{proof}
We see from Lemma \ref{lemma1.6.SJ} that if the $\preceq_{f,F}$-well-founded part of $\Omega_f$ is closed under the Kleene schemes S1-S9 relative to $F$, then $\SJ(F)$ is computable in $f$, $F$ and $S$ as above. We need $S$ to isolate the well-founded part, and (only) $F$ and $\mu^{2}$ to decide if we have the closure. 

\smallskip

Now, assume that $f$ is such that the $\preceq_{f,F}$-well-founded part is not S1-S9-closed. Let $\{e\}(F,\vec a) = b$ be a computation of minimal rank such that we do not have $[e]_f(\vec a) = b$. By induction on $b$ we see that there must be an index $d$ such that $\{d\}(F,d) \in \{0,1\}$ and $\{d\}(F,d) \neq f(d)$. If we then put $G_F(f):= d+1$ we have ensured that there will be no extension of $\overline f G_F(f)$ in $X_F$. Using Gandy selection for $F,\mu$ and $f$, we can trivially find a $d$ with this property from the well-founded part of $\Omega_f$.  
In order to show that $G_F$ is definable from $S,F,\mu$ via a term in G\"odel's $T$, we proceed as follows:
\begin{quote}
Given the well-founded part $W$ of $\Omega_F$, we may arithmetically decide if it respects S1-S9. If it does not, let $\Gamma$ be the, arithmetically in $F$, inductive definition of the computation tuples for computing relative to $F$, and by one application of $\mu$ on $\Gamma(W) \setminus W$, we may find the $(e,\vec a , b)$ that leads us to the $d$ we need.
\end{quote}
In light of the previous, we put $G_{F}(f) := 0$ if the $\preceq_{f,F}$-well-founded part of $\Omega_f$ is a fixed point of the inductive definition of computations relative to $F$, while we put $G_{F}(f): = d+1$ for the $d$ selected as above otherwise.  Thus,  $\Theta(G_F)$ must contain a function from which, together with $F$ and $S$, we can compute $\SJ(F)$. 
\end{proof}
Some of the methods in this proof have been expanded in \cite{dagsamV, dagcie18}, where even sharper results are obtained.

\subsubsection{Computing the Suslin functional}\label{lindeb}
We show that the Suslin functional $S$ can be computed by the combination of Feferman's $\mu$ and the functional $\Xi$ arising from the Lindel\"of lemma for $\N^{\N}$.  
Furthermore, the Lindel\"of lemma for $\N^{\N}$ (not involving $\Xi$) and the axiom $(\mu^{2})$ are seen to imply $\FIVE$. 

\smallskip

Regarding the Linde\"of lemma, we recall that Lindel\"of already proved that Euclidean space is \emph{hereditarily Lindel\"of} in \cite{blindeloef} around 1903.  
Now, the latter hereditary property implies that $\N^{\N}$ has the Lindel\"of property, since $\N^\N$ is homeomorphic to the irrationals in $[0,1]$ using continued fractions expansion.
Thus, for any $\Psi^{2}$, the corresponding `canonical cover' of $\N^{\N}$ is $\cup_{f\in \N^{\N}}\big[\overline{f}\Psi(f)\big]$ where $[\sigma^{0^{*}}]$ is the set of all extensions in $\N^{\N}$ of $\sigma$.  By the Lindel\"of lemma for $\N^{\N}$, there is a sequence $f_{(\cdot)}^{0\di 1}$ such that the set of $\cup_{i\in \N}[\bar f_{i} \Psi(f_i)]$ still covers $\N^{\N}$, i.e.\
\be\tag{$\LIND_{4}$}
(\forall \Psi^{2})(\exists f_{(\cdot)}^{0\di 1})(\forall g^{1})(\exists n^{0})( g \in  \big[\overline{f_{n}}\Psi(f_{n})\big] ).
\ee
Similar to the specification $\SCF(\Theta)$ for the special fan functional $\Theta$, we introduce the following specification based on $\LIND_{4}$. 
As for the former specification, the functional $\Xi^{2\di (0\di 1)}$ satisfying $\LIN(\Xi)$ is not unique.  
\be\tag{$\LIN(\Xi)$}
(\forall \Psi^{2})(\forall g^{1})(\exists n^{0})( g \in  \big[\overline{\Xi(\Psi)(n)}\Psi(\Xi(\Psi)(n))\big] ).
\ee
As for the special fan functional $\Theta$ in Theorem \ref{nolapdog}, the existence of $\Xi$ as in $\LIN(\Xi)$ amounts to the Lindel\"of lemma $\LIND_{4}$ itself.  
\begin{thm}\label{cruckZ}
The system $\FIVE^{\omega}+\QFAC^{2,1}$ proves $\LIND_{4}\asa (\exists \Xi)\LIN(\Xi)$.  
\end{thm}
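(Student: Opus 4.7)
The backward implication is immediate. Assume $\Xi^{2\to (0\to 1)}$ satisfies $\LIN(\Xi)$, and fix $\Psi^2$. Setting $f_n := \Xi(\Psi)(n)$ yields a sequence $f_{(\cdot)}^{0\to 1}$, and the conclusion of $\LIND_{4}$ for this $\Psi$ and this sequence is literally the instance of $\LIN(\Xi)$ at $\Psi$. So $(\exists\Xi)\LIN(\Xi)\to \LIND_{4}$ is provable already in $\RCAo$.

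For the forward direction, assume $\LIND_{4}$ and work in $\FIVE^{\omega}+\QFAC^{2,1}$. A sequence $f_{(\cdot)}^{0\to 1}$ is coded as a single type-$1$ object $F^1$ via pairing: $F(\langle n,k\rangle):=f_n(k)$, and we write $(F)_n$ for its $n$-th slice. Under this coding, $\LIND_{4}$ asserts
\[
(\forall\Psi^2)(\exists F^1)\, B(\Psi,F),\qquad B(\Psi,F)\equiv (\forall g^1)(\exists n^0)\bigl(g\in \bigl[\overline{(F)_n}\,\Psi((F)_n)\bigr]\bigr).
\]
The quantifier-free kernel of $B$ is arithmetic in $\Psi, F, g$, and hence, given Feferman's $\mu^2$ (which is computable from $S^2$), equivalent to a quantifier-free formula $P(\Psi,F,g)$ built from $\Psi, F, g, \mu$. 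Thus $B(\Psi,F)$ has the shape $(\forall g^1)(\exists n^0)P(\Psi,F,g,n)$, i.e.\ is a $\Pi^1_1$-formula in the parameters $\Psi,F$.

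Now apply $(S^2)$: by the defining property of the Suslin functional, there is a term $t$ in $S^2,\mu$ such that $t(\Psi,F)=_0 0$ holds if and only if $B(\Psi,F)$ does, uniformly in the type-$2$ and type-$1$ parameters $\Psi$ and $F$. Thus $\LIND_{4}$ becomes $(\forall\Psi^2)(\exists F^1)\,[t(\Psi,F)=_0 0]$, where the matrix is quantifier-free. Apply $\QFAC^{2,1}$ to extract $\Xi'^{2\to 1}$ with $(\forall\Psi^2)[t(\Psi,\Xi'(\Psi))=_0 0]$, and finally define $\Xi(\Psi)(n):=(\Xi'(\Psi))_n$. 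Unfolding $t$, this $\Xi^{2\to(0\to 1)}$ satisfies $\LIN(\Xi)$, as required.

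The only real subtlety is verifying that the $\Pi^1_1$-matrix $B(\Psi,F)$, which carries the type-$2$ parameter $\Psi$, can be reduced to a quantifier-free predicate via $S^2$. This is standard: $S^2$ decides $\Sigma^1_1$-matrices uniformly in their parameters, and the presence of $\Psi$ only enters the arithmetic body, which is handled by $\mu$ once $\Psi$ is available as an oracle. Coding a sequence as a single type-$1$ object via pairing is routine and lives inside $\RCAo$, so the only non-trivial axioms used are $(S^2)$ to collapse the $\Pi^1_1$-matrix and $\QFAC^{2,1}$ to perform the choice across the type-$2$ parameter.
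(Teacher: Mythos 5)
Your proof is correct and follows essentially the same route as the paper: both handle only the forward direction nontrivially, use $S^2$ together with $\mu^2$ to render the $\Pi^1_1$ matrix quantifier-free, and then extract $\Xi$ by $\QFAC^{2,1}$. The only (cosmetic) difference is how the type-$2$ parameter $\Psi$ inside the matrix is tamed: the paper repackages the values $\Psi(f_n^+)$ into the witness sequence itself via an arithmetical side condition, whereas you absorb them by forming the type-$1$ function $\lambda n.\Psi((F)_n)$ before invoking the Suslin functional; both devices are standard and equivalent here.
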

\begin{proof}
We only need to prove the forward direction. We rephrase $\LIND_{4}$ to
\be\label{arguko}
(\forall G^{2})(\exists f_{(\cdot)}^{0\di 1})\big[(\forall g^{1})(\exists n^{0})( g \in  \big[\overline{f^+_{n}}f_n(0)\big]) \wedge (\forall m^{0})(f_m(0) = G(f_m^+))\big],
\ee
where $f^+(k) = f(k+1)$.
Using the Suslin functional $S$ and $\mu$ we see that the part of \eqref{arguko} inside the (outermost) square brackets can be viewed as quantifier-free, and thus the existence of $\Xi$ follows from $\QFAC^{2,1}$.  
\end{proof}
\noindent
For a (much) weaker base theory, we need the following functional from \cite{dagsam, dagsamV}.  
\be\tag{$\kappa_{0}^{3}$}
(\exists \kappa_{0}^{3}\leq_{3}1)(\forall Y^{2})\big[\kappa_{0}(Y)=0\asa (\exists f\in C)Y(f)=0  \big].
\ee
Here, $\RCAo+\WKL+(\kappa_{0}^{3})+\QFAC^{0,1}$ is conservative
\emph{up to language}\footnote{The fundamental objects in the language of $\RCAo$ are functions, with sets being definable from these, while it is exactly the opposite for $\RCA_{0}$. This however makes no difference.} over $\WKL_{0}$ by \cite{kohlenbach2}*{Prop.\ 3.15}, while $\RCAo$ proves that $[(\exists^{2})+(\kappa_{0}^{3})]\asa (\exists^{3})$ by \cite{dagsam}*{Rem.\ 6.13}.  
\begin{cor}\label{cruckZ222}
The system $\RCAo+(\kappa_{0}^{3})+\QFAC^{2,1}$ proves $\LIND_{4}\asa (\exists \Xi)\LIN(\Xi)$.  
\end{cor}
\begin{proof}
Consider $(\exists^{2})\vee \neg(\exists^{2})$ and note that $\exists^{3}$ follows in the former case, while all functions on Baire space are continuous in the latter case by \cite{kohlenbach2}*{Prop.\ 3.7}.
Hence, we may define $\Xi^{3}$ in the latter case as the functional that lists all finite sequences of natural numbers on input any (by assumption continuous) functional.  
\end{proof}
The functional $\Xi$ is weak in insolation, by the following theorem.  
\begin{thm}\label{predinot}
$\RCAo+(\exists \Xi)\LIN(\Xi)$ proves the same $\L_{2}$-sentences as $\RCA_{0}$.  
\end{thm}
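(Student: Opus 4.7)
The plan is to follow the template of Corollary~\ref{dirfi} via the $\ECF$ translation. By \cite{kohlenbach2}*{Prop.\ 3.1}, any $\L_{\omega}$-sentence $A$ provable in $\RCAo$ has the property that $\RCA_{0}$ proves $[A]_{\ECF}$; moreover $[\varphi]_{\ECF}\equiv\varphi$ for $\varphi\in \L_{2}$. Hence it suffices to verify that $\RCA_{0}\vdash [(\exists \Xi)\LIN(\Xi)]_{\ECF}$.

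Under $\ECF$, the universal quantifier $(\forall \Psi^{2})$ becomes quantification over associates $\gamma\in K_{0}$ encoding continuous functionals $\N^{\N}\to\N$, while the type-$3$ existential $(\exists \Xi)$ becomes an existential over an associate for a continuous operator acting on such codes. I would construct such an operator explicitly in $\RCA_{0}$: given $\gamma\in K_{0}$, enumerate (by $\Delta_{1}^{0}$-comprehension) all pairs $(\sigma,\tau)$ of finite sequences with $\sigma$ minimal satisfying $\gamma(\sigma)>0$ (that is, $\gamma(\sigma')=0$ for every proper prefix $\sigma'$) and with $\tau$ extending $\sigma$ of length $|\tau|\geq \gamma(\sigma)-1$. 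Letting $(\sigma_{n},\tau_{n})$ be the $n$-th such pair in the canonical enumeration, define $f_{n}\in \N^{\N}$ to be the extension of $\tau_{n}$ by zeros, and output $(f_{n})_{n\in \N}$ as the value of the constructed $\Xi$ on $\gamma$.

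To verify the translated $\LIN(\Xi)$: since $\sigma_{n}$ is minimal and $f_{n}$ extends $\sigma_{n}$, the continuous $\Psi$ encoded by $\gamma$ satisfies $\Psi(f_{n})=\gamma(\sigma_{n})-1\leq |\tau_{n}|$, so $\overline{f_{n}}\Psi(f_{n})$ is a prefix of $\tau_{n}$. Now given any $g\in \N^{\N}$, since $\gamma\in K_{0}$ there is a least $m$ with $\gamma(\overline{g}m)>0$; setting $\sigma:=\overline{g}m$ and $\tau:=\overline{g}\max\{m,\gamma(\sigma)-1\}$, we obtain an admissible pair that appears at some index $n$, and $g\in [\overline{f_{n}}\Psi(f_{n})]$ follows because $g$ agrees with $f_{n}$ on the first $\gamma(\sigma)-1$ coordinates.

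The main obstacle is handling the case $\gamma(\sigma)-1>|\sigma|$: without padding by $\tau\supseteq\sigma$, the neighborhood $[\overline{f}\Psi(f)]$ for a tag $f$ obtained by extending $\sigma$ with zeros would strictly shrink past $[\sigma]$, so a naive enumeration of minimal $\sigma$'s alone could miss the bulk of $\N^{\N}$. The enumeration-with-padding is precisely what restores exhaustiveness. All remaining verifications---that the index for the constructed $\Xi$ lies in the appropriate $K_{0}$ and that the translated covering formula is thereby validated---are routine and mirror the argument of Corollary~\ref{dirfi}.
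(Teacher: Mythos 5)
Your proposal is correct and follows essentially the same route as the paper: reduce via the $\ECF$ translation (\cite{kohlenbach2}*{Prop.\ 3.1}) to showing that $[(\exists \Xi)\LIN(\Xi)]_{\ECF}$ is provable in $\RCA_{0}$, and then realise the translated $\Xi$ by an associate that enumerates the finite sequences on which $\gamma$ converges. Your padding of each minimal $\sigma$ by extensions $\tau$ of length at least $\gamma(\sigma)-1$ merely makes explicit a detail the paper leaves implicit in its appeal to the enumeration from the proof of Theorem \ref{main2}.
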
  
\begin{proof}
As in the proof of Corollary \ref{dirfi}, it suffices to show that $[(\exists \Xi)\LIN(\Xi)]_{\ECF}$ is provable in $\RCA_{0}$.  
However, $(\exists \Xi)\LIN(\Xi)$ only involves objects of type $0$ and $1$ except for the two leading quantifiers.  
Hence, $[(\exists \Xi)\LIN(\Xi)]_{\ECF}$ is as follows:
\[
(\exists \xi^{1}\in K_{0})(\forall \gamma^{1}\in K_{0})(\forall g^{1})(\exists n^{0})( g \in  \big[\overline{\xi(\gamma)(n)}\gamma(\xi(\gamma)(n))\big] ).
\]
Thus, by defining $\xi$ as the enumeration of $\gamma(w)$ as in the proof of Theorem \ref{main2}, we obtain an associate for a functional producing a countable sub-cover, and the sentence $[(\exists \Xi)\LIN(\Xi)]_{\ECF}$ is therefore provable in $\RCA_{0}$.    
\end{proof}
The functional $\Xi$ becomes strong when combined with $\mu^{2}$, as follows.  
\begin{thm}\label{super2}
The Suslin functional $S$ is uniformly computable in Feferman's $\mu$ and any $\Xi$ satisfying $\LIN(\Xi)$.
Furthermore, $\ACAo+(\exists \Xi)\LIN(\Xi)$ proves $(S^{2})$.
\end{thm}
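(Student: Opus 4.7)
The plan is to reduce the defining condition of the Suslin functional, namely $(\exists g^1)(\forall n^0)(f(\overline{g}n)=_0 0)$, to the existence of a special element in a countable sub-cover produced by $\Xi$ applied to a carefully chosen cover of Baire space. Throughout, Feferman's $\mu$ is used to decide arithmetical predicates, including the $\Pi_1^0$ witness condition in the definition of $\Psi$ below.

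Given $f^1$, first define a functional $\Psi^2$ (uniformly computable in $f$ and $\mu$) by
\[
\Psi(g) := \begin{cases} 0 & \text{if } (\forall n^0)(f(\overline{g}n)=0), \\ 1+\mu n.[f(\overline{g}n)\neq 0] & \text{otherwise.} \end{cases}
\]
Here $\Psi(g)=0$ is the flag indicating that $g$ is a witness for the $\Sigma_1^1$ formula; otherwise $\Psi(g)$ is large enough that the basic open $[\overline{g}\Psi(g)]$ is contained in the set of non-witnesses, because any $g'\in[\overline{g}\Psi(g)]$ shares with $g$ the bad initial segment $\overline{g}(\Psi(g)-1)$ on which $f$ is non-zero. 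Next, apply $\Xi$ to $\Psi$ to obtain $h_i := \Xi(\Psi)(i)$, so that by $\LIN(\Xi)$ we have $(\forall g^1)(\exists i^0)(g\in[\overline{h_i}\Psi(h_i)])$.

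The crucial claim is that $(\exists g^1)(\forall n^0)(f(\overline{g}n)=0)$ is equivalent to $(\exists i^0)(\Psi(h_i)=0)$. The implication $(\Leftarrow)$ is immediate from the definition of $\Psi$. For $(\Rightarrow)$, if $g_0$ is a witness, pick $i$ with $g_0\in[\overline{h_i}\Psi(h_i)]$; if $\Psi(h_i)>0$, then by construction $g_0$ inherits a non-zero value $f(\overline{g_0}k)=f(\overline{h_i}k)\neq 0$ for $k=\Psi(h_i)-1$, contradicting that $g_0$ witnesses the formula. Hence $\Psi(h_i)=0$. Since $(\exists i^0)(\Psi(h_i)=0)$ is a $\Sigma_1^0$ statement in parameters computable from $f,\mu,\Xi$, the characteristic functional $S^2$ can be defined by one more application of $\mu$ to this predicate, yielding a term of G\"odel's $T$ in $\mu,\Xi$ that witnesses $(S^2)$ uniformly in $f$.

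For the second statement, observe that $\ACAo$ contains $(\mu^2)$ by definition, so $\ACAo+(\exists\Xi)\LIN(\Xi)$ has access to both $\mu$ and $\Xi$; the construction above then produces the functional required by $(S^2)$. The only non-routine step is verifying the key equivalence in the claim, and the main conceptual point is the double role of $\Psi(g)=0$: it both flags witnesses and forces the associated ``basic open'' to be all of $\N^\N$, so that the countable sub-cover returned by $\Xi$ must explicitly list a witness whenever one exists.
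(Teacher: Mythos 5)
Your proposal is correct and follows essentially the same route as the paper: your $\Psi$ is exactly the paper's functional $F_f$ (value $0$ on witnesses, $1+$ the least bad $n$ otherwise), and the key equivalence $(\exists g)(\forall n)(f(\overline{g}n)=0)\asa(\exists i)(\Psi(\Xi(\Psi)(i))=0)$ is the paper's equation \eqref{durkio}, proved by the same observation that a non-witness's neighbourhood $[\overline{h}\Psi(h)]$ cannot contain a witness. The final step of deciding the resulting $\Sigma^0_1$ statement with $\mu$ and the remark that $\ACAo$ already contains $(\mu^2)$ also match the paper's argument.
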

\begin{proof}
Recall the definition of the Suslin functional $S$ as follows:
\[
S(f) = \left\{ \begin{array}{cl} 0&{\rm if~} (\exists g^1) (\forall n^0) (f(\bar gn) = 0) \\ 1 & {\rm otherwise}\end{array}\right.~.
\]
Define $F^2_f(g)$ as $n+1$ if $n$ is minimal such that $f(\bar gn) > 0$,  and $0$ if there is no such $n$.
Note that $F_f$ is readily defined from $f$ (in terms of $\mu^{2}$) inside $\ACAo$, and note that if $F_f(h) > 0$ and $\bar gF_f(h) = \bar hF_f(h)$, then $F_f(g) = F_f(h)$.
Let $\Xi$ be such that $\LIN(\Xi)$, and consider the following formula  
\be\label{durkio}
S(f) = 0 \asa (\exists i^{0} )(F_f(\Xi(F_{f})(i)) = 0).
\ee
The reverse direction in \eqref{durkio} is immediate by the definition of $F_{f}$.  For the forward direction, assume $S(f)=0$ and let $g^{1}$ satisfy $(\forall n^0)(f(\bar gn) = 0)$, i.e.\ $F_f(g) = 0$. As observed above, if $F_f(h) > 0$, we have  $g \not\in [\bar h F_f(h)]$; hence if $F_f(h_n) > 0$ \emph{for all $n\in \N$} where $h_{n} = \Xi(F_f)(n)$, the corresponding countable subset of the covering induced by $F_f$ will not be a covering.  Thus $F_f(\Xi(F_f)(n)) = 0$ must hold for some $n$, i.e.\ the right-hand side of \eqref{durkio} follows.
Finally, \eqref{durkio} clearly characterises $S(f)$ in terms of $\mu$, $f$ and $\Xi$ (via a term in G\"{o}del's $T$), and we are done.
\end{proof}
The reader can readily verify that the proof in the theorem also goes through using intuitionistic logic.  
Combining the previous results, we get the following.  
\begin{cor}
$\RCAo+\QFAC^{2,1}$ proves $[(S^{2})+\LIND_{4}]\asa [(\exists \Xi)\LIN(\Xi)+(\mu^{2})]$.
\end{cor}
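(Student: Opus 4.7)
The plan is to derive the corollary by directly stitching together Theorems \ref{cruckZ} and \ref{super2}, splitting the biconditional into its two directions. The main observation is that all the heavy lifting has been done, and what remains is mostly a routine assembly of the ingredients; there is no single hard step to overcome, so the main obstacle is just keeping track of which base theory each cited result requires.

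For the forward direction, assume $(S^{2})+\LIND_{4}$ in $\RCAo+\QFAC^{2,1}$. First, one deduces $(\mu^{2})$ from $(S^{2})$: since the Suslin functional decides $\Sigma_{1}^{1}$-formulas, it in particular decides $\Sigma_{1}^{0}$-formulas, from which Feferman's $\mu$ is definable by a term in G\"odel's $T$. Next, since $\RCAo+(S^{2})$ is exactly $\FIVE^{\omega}$, the hypotheses meet the conditions of Theorem~\ref{cruckZ}, which delivers $(\exists \Xi)\LIN(\Xi)$ from $\LIND_{4}$ together with $\QFAC^{2,1}$. Thus both conjuncts of the right-hand side are obtained.

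For the reverse direction, assume $(\exists \Xi)\LIN(\Xi)+(\mu^{2})$ in $\RCAo+\QFAC^{2,1}$. First, apply Theorem~\ref{super2}: the Suslin functional $S$ is uniformly computable, via a term of G\"odel's $T$, from $\mu$ and any $\Xi$ satisfying $\LIN(\Xi)$; since $\RCAo$ is closed under such term definitions, we obtain $(S^{2})$. Second, $\LIND_{4}$ follows \emph{immediately and without any choice principle} from $(\exists \Xi)\LIN(\Xi)$: given any $\Psi^{2}$, the sequence $f_{n}:=\Xi(\Psi)(n)$ is exactly the countable subcover required by $\LIND_{4}$, by the very defining clause of $\LIN(\Xi)$. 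This completes the equivalence.

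The only point that requires a moment's care is verifying that each step takes place in the advertised base theory $\RCAo+\QFAC^{2,1}$: Theorem~\ref{cruckZ} is invoked only after $(S^{2})$ is in hand (so we are effectively in $\FIVE^{\omega}+\QFAC^{2,1}$, which is its stated hypothesis), and Theorem~\ref{super2} is invoked only after $(\mu^{2})$ is in hand (so we are effectively in $\ACAo$, which suffices there). Since both cited theorems are used entirely within their stated ambient systems and the remaining manipulations (deducing $\mu$ from $S$ and reading off $\LIND_{4}$ from $\LIN(\Xi)$) are elementary in $\RCAo$, the equivalence is established.
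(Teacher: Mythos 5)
Your proof is correct and follows exactly the route the paper intends: the corollary is stated immediately after Theorem \ref{super2} with the remark ``combining the previous theorems,'' and your assembly of Theorem \ref{cruckZ} (applicable once $(S^{2})$ gives $\FIVE^{\omega}$) with Theorem \ref{super2} (applicable once $(\mu^{2})$ gives $\ACAo$), plus the standard derivations of $\mu$ from $S$ and of $\LIND_{4}$ from $(\exists\Xi)\LIN(\Xi)$ by instantiation, is precisely that argument. Your care about which ambient system each cited theorem requires is exactly the right bookkeeping.
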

\begin{cor}\label{superduper}
The superjump $\SJ$ is computable in any $\Xi$ satisfying $\LIN(\Xi)$ and Feferman's $\mu$, by a term in G\"{o}del's $T$.
\end{cor}
\begin{proof} Given such $\Xi$, there are terms $t_1, t_2$ such that $\SCF(t_1(\Xi,\mu))$ (i.e.\ $\Theta$ is given by $t_1(\Xi,\mu)$), and $S =_{2} t_2(\Xi,\mu)$. Checking the details  of the proof of Theorem \ref{super} and the construction of $G_F$, we see that there is a term $t_3$ such that $G_F(f) = t_3(F,f,S,\mu)$. Since $\SJ(F)$ is primitive recursive in $\Theta(G_F)$, the theorem follows.
\end{proof}
Finally, the presence of $\Xi$ is not necessary if one is only interested in $\FIVE$.
In particular, the following version of the Lindel\"of lemma expresses that for a \emph{sequence} of open covers of Baire space, there is a \emph{sequence} of countable sub-covers. 
\be\tag{$\LIND_{\seq}$}
(\forall \Psi_{(\cdot)}^{0\di 2})(\exists f_{(\cdot, \cdot)}^{(0\times 0)\di 1})(\forall m^{0})\big[(\forall g^{1})(\exists n^{0})\big( g \in  [\overline{f_{n,m}}\Psi_{m}(f_{n,m})] \big)\big].
\ee
Note that such `sequential' theorems are well-studied in RM, starting with \cite{simpson2}*{IV.2.12}, and can also be found in e.g.\ \cites{fuji1,fuji2,hirstseq,dork2,dork3}.  The following corollary was first published in \cite{dagsamV}.  
\begin{cor}\label{theultimate}
The system $\ACAo+\LIND_{\seq}$ proves $\FIVE$.
\end{cor}
\begin{proof}
The proof of Theorem \ref{super2} goes through with minor modification.  
Due to the below `grand claims' based on this corollary, we do provide the proof in some detail.  
First of all, by \cite{simpson2}*{V.1.4}, any $\Sigma_{1}^{1}$-formula can be brought into the `normal form' $(\exists g^{1})(\forall n^{0})(f(\overline{g}n) =0)$, given arithmetical comprehension. 
Thus, suppose $\varphi(m)\in \Sigma_{1}^{1}$ has normal form $(\exists g^{1})(\forall n^{0})(f(\overline{g}n,m) =0)$ and define $F^2_m$ as follows: $F_{m}(g)$ is $n+1$ if $n$ is minimal such that $f(\bar gn,m) > 0$, and $0$ if there is no such $n$.
Note that $F_{m}$ is based on $F_{f}$ from the theorem.  Apply $\LIND_{\seq}$ for $\Psi^{2}_{(\cdot)}=F_{(\cdot)}$ and let $f_{(\cdot, \cdot)}$ be the sequence thus obtained.  We define $X\subset\N$ as follows:
\be\label{jawell}
X:=\{ m^{0} : (\exists n^{0})(F_{m}(f_{n,m})=0)   \}, 
\ee
using $(\mu^{2})$.  We now prove $(\forall m^{0})(m\in X\asa \varphi(m))$, establishing the corollary.  If $m\in X$, then there is $g^{1}$ such that $F_{m}(g)=0$, i.e.\ $(\forall n^{0})(f(\overline{g}n,m)=0)$ by definition, and hence $\varphi(m)$.  
Now assume $\varphi(m_{0})$ for fixed $m_{0}$, i.e.\ let $g_{0}$ be such that $(\forall n^{0})(f(\overline{g_{0}}n,m_{0}) =0)$, and note that for any $m^{0}, g^{1}, h^{1}$, if $F_m(h) > 0$ and $\bar gF_m(h) = \bar hF_m(h)$, then $F_m(g) = F_m(h)$.
In particular, if $F_{m_{0}}(h) > 0$, we have  $g_{0} \not\in [\bar h F_{m_{0}}(h)]$.  Hence, if $F_{m_{0}}(f_{n,m_{0}}) > 0$ \emph{for all $n^{0}$}, $g_{0}$ is not in the covering consisting of the union of $[\overline{f_{n,m_{0}}}F_{m_{0}}(f_{n,m_{0}})]$ for all $n^{0}$, contradicting $\LIND_{\seq}$.  
Thus, we must have $(\exists n^{0})(F_{m_{0}}(f_{n,m_{0}})=0)$, implying that $m_{0}\in X$ by \eqref{jawell}. 
\end{proof}
Due to he fact that $\N \times \N^\N$ is trivially homeomorphic to $\N^\N$, $\LIND_\seq$ is derivable from (and hence equivalent to) $\LIND_4$, and we obtain the following result.
\begin{cor}\label{jaweltoch}
The system $\ACAo+\LIND_{4}$ proves $\FIVE$. 
\end{cor}

\begin{rem}[On predicativist mathematics]\label{predifiel}\rm
We have discussed the \emph{compatibility problem} for Nelson's predicative arithmetic (and its negative answer) in Section~\ref{comp}.  
We now argue that Corollary \ref{jaweltoch} also settles the compatibility problem for Weyl-Feferman predicative mathematics \emph{in the negative}.  
To this end, we exhibit two \emph{natural} theorems $A$ and $B$ which are both \emph{acceptable} in predicative mathematics but $A\wedge B$ is not.  
In a nutshell, $\ATR_{0}$ is considered the `upper limit' of predicative mathematics; both $\RCAo+\LIND_{4}$ and $\ACAo$ fall `well below' this upper limit, while the combination $\ACAo+\LIND_{4}$ falls `well above' the upper limit.  Hence, $\ACAo$ and $\RCAo+\LIND_{4}$ are acceptable in predicative mathematics, but the combination is not: $\FIVE$ is even the textbook example of an impredicative system (\cite{simpson2}*{\S I.12}).  
A detailed discussion (including technicalities) is as follows. 

\smallskip

First of all, we elaborate on the notion of `acceptable in predicative mathematics'.
On one hand there is Feferman's notion of \emph{predicative provability} (\cite{fefermanga, fefermanmain}), 
which is rather limited and clumsy when dealing with ordinary mathematics, according to Simpson (\cite{simpsonfriedman}*{p.\ 154}).
On the other hand, the weaker notion of \emph{predicative reducibility} is more flexible: a formal system $T$ is \emph{predicatively reducible} 
if -intuitively speaking- it is not stronger than a system $S$ which is predicatively provable.  
Thus, while $T$ may involve \emph{impredicative} notions, the latter are `safe' from the point of view of predicative mathematics as these notions only provide as much strength/power as $S$, and the latter's `predicative status' is well-known.    

\smallskip

Secondly, Feferman and Sch\"utte have shown (independently) that the least \emph{non-predicatively provable} ordinal is $\Gamma_{0}$ (See \cite{fefermanmain}*{p.\ 607} for details and references).  Hence, a formal system $T$ is called \emph{predicatively reducible} if its ordinal $|T|$ satisfies $|T|<\Gamma_{0}$.   
Note that $|\ATR_{0}|=\Gamma_{0}$, which motivates the status of $\ATR_{0}$ as the upper limit of predicative mathematics.  
Now, the proof-theoretic ordinal of $\RCAo+\LIND_{4}$ (resp.\ $\ACAo$) is $\omega^{\omega}$ (resp.\ $\eps_{0}$) by Theorem \ref{predinot} (resp.\ \cite{yamayamaharehare}*{Theorem 2.2}) and \cite{simpson2}*{IX.5}.
Since $\omega^{\omega}<\eps_{0}<\Gamma_{0}$, both these systems are predicatively reducible.  
By contrast, the combination of these systems, namely $\ACAo+\LIND_{4}$ implies $\FIVE$ by Corollary \ref{jaweltoch}, and the ordinal for the latter system is far beyond $\Gamma_{0}$.  
We refer to \cite{simpson2}*{IX.5} for background concerning the cited results and further references.  

\smallskip

Finally, we believe there to be many \emph{purely logical} statements $C$ and $D$ that are predicatively reducible, while $C\wedge D$ is not.  
Nonetheless, to the best of our knowledge, our result as in Corollary \ref{jaweltoch} is unique in that it provides two \emph{natural} theorems $A$ and $B$ that are predicatively reducible, while $A\wedge B$ is not.  
As a bonus, the proof of Theorem \ref{super2} also goes through using only intuitionistic logic.  
While $\LIN_{4}$ is quite natural (and implied by Lindel\"of's 1903 lemma), the same is not immediately clear for $(\exists \Xi)\LIN(\Xi)$, though a case can be made: $\Xi$ is essentially a realiser for paracompactness (as shown in \cite{sahotop}), and the latter seems to be essential 
for proving metrisation theorems, as suggested by the results in \cite{mummymf}*{Lemma 4.10}.
\end{rem}

\begin{ack}\rm
Our research was supported by the John Templeton Foundation, the Alexander von Humboldt Foundation, LMU Munich (via the Excellence Initiative and the Center for Advanced Studies of LMU), and the University of Oslo.
We express our gratitude towards these institutions. 
We thank Ulrich Kohlenbach, Karel Hrbacek, and Anil Nerode for their valuable advice.  We also thank the anonymous referee for the helpful suggestions.  
Opinions expressed in this paper do not necessarily reflect those of the John Templeton Foundation.    
\end{ack}

\begin{bibdiv}
\begin{biblist}
\bibselect{allkeida}
\end{biblist}
\end{bibdiv}

\bye